\documentclass[12pt,article]{amsart}
\usepackage{mathrsfs}
\usepackage{amssymb}
\usepackage{amsfonts}
\usepackage{amsbsy}
\usepackage{latexsym}
\usepackage{amssymb,latexsym,amsmath,amsthm}
\usepackage{framed}
\usepackage[colorlinks,linkcolor=blue]{hyperref}
\usepackage{graphicx}
\usepackage{xcolor}
\usepackage{epstopdf}
\usepackage{bm}
\usepackage[normalem]{ulem}
\setlength{\topmargin}{0.25in} \setlength{\textheight}{8.0in}
\setlength{\oddsidemargin}{0.25in}
\setlength{\evensidemargin}{0.25in} \setlength{\textwidth}{6.0in}
\theoremstyle{plain}

 \theoremstyle{remark} 




\newtheorem {theo} {\bf Theorem} [section]
\newtheorem {prop} [theo] {\bf Proposition}

\newtheorem {lem} [theo] {\bf Lemma}

\newtheorem {defi} {\bf Definition}[section]
\newtheorem{exam} {\bf Example}[section]

\newtheorem{rem}{\bf Remark}[section]


\numberwithin{equation}{section}
\begin{document}
\title[Single-angle Radon samples based  determination]{ Determination     of  compactly supported  functions in   shift-invariant space by single-angle Radon samples}
\author{Youfa Li}
\address{College of Mathematics and Information Science\\
Guangxi University,  Nanning, China }
\email{youfalee@hotmail.com}
\author{Shengli Fan}
\address{CREOL College of Optics \& Photonics\\
University of Central Florida,  Orlando, FL 32816}
\email{shengli.fan@knights.ucf.edu}
\author{Deguang Han}
\address{Department of Mathematics,
University of Central Florida,  Orlando, FL 32816}
\email{Deguang.Han@ucf.edu}
\thanks{
Youfa Li is partially supported by Natural Science Foundation of China (Nos: 61961003, 61561006, 11501132),  Natural Science Foundation of Guangxi (Nos: 2018JJA110110, 2016GXNSFAA380049) and  the talent project of  Education Department of Guangxi Government  for Young-Middle-Aged backbone teachers.
Deguang Han  is partially supported by the NSF grants  DMS-1712602 and DMS-2105038.
}
\keywords{computerized tomography, single-angle Radon transform, shift-invariant spaces, positive definite function, direction vector, sampling set}
\subjclass[2010]{Primary 42C40; 65T60; 94A20}

\date{\today}

\begin{abstract} While traditionally the computerized tomography  of a  function  $f\in L^{2}(\mathbb{R}^{2})$ depends on the samples of its  Radon transform at multiple angles, the real-time imaging sometimes requires the reconstruction   of $f$  by the   samples of its Radon transform $\mathcal{R}_{\emph{\textbf{p}}}f$ at a single angle $\theta$, where
$\emph{\textbf{p}}=(\cos\theta, \sin\theta)$ is the direction vector. This naturally leads to the question of identifying those functions that can be determined   by their  Radon  samples    at a single angle $\theta$.
The    shift-invariant space  $V(\varphi, \mathbb{Z}^2)$ generated by $\varphi$ is  a   type of  function space that has been  widely  considered  in  many fields  including  wavelet  analysis and  signal processing.   In this paper  we examine  the  single-angle reconstruction  problem  for compactly supported functions $f\in V(\varphi, \mathbb{Z}^2)$. The central  issue  for the  problem
is to identify  the eligible  $\emph{\textbf{p}}$ and sampling set $X_{\emph{\textbf{p}}}\subseteq \mathbb{R}$
such that $f$ can be determined   by its single-angle Radon (w.r.t $\emph{\textbf{p}}$) samples at $X_{\emph{\textbf{p}}}$.
For the general generator $\varphi$, we address the eligible  $\emph{\textbf{p}}$ for the two cases: (1) $\varphi$ being nonvanishing ($\int_{\mathbb{R}^{2}}\varphi(\emph{\textbf{x}})d\emph{\textbf{x}}\neq0$) and (2) being  vanishing ($\int_{\mathbb{R}^2}\varphi(\emph{\textbf{x}})d\emph{\textbf{x}}=0$). We prove that eligible  $X_{\emph{\textbf{p}}}$ exists for general $\varphi$. In particular, $X_{\emph{\textbf{p}}}$
can be explicitly constructed if  $\varphi\in C^{1}(\mathbb{R}^{2})$.
Positive definite functions  form an important  class of functions that have been widely applied
in scattered data interpolation. The single-angle problem corresponding to the case that
$\varphi$ being positive definite is addressed such that  $X_{\emph{\textbf{p}}}$
can be constructed  easily. Besides using the samples of the single-angle  Radon transform, another   common feature for our recovery  results is that the
number of the  required samples is minimum.
\end{abstract}
\maketitle
\section{Introduction}\label{diyijiesection}
\subsection{CT and Radon transform}\label{subsection1.1}
We start with the X-ray computerized tomography (CT) on $\mathbb{R}^{2}$. 
Its core mathematics   includes  the Radon transform and its inversion.
For a function $f: \mathbb{R}^2\longrightarrow \mathbb{C}$
its   Radon transform at $t\in \mathbb{R}$, w.r.t a   direction vector   $\emph{\textbf{p}}=(\cos\theta, \sin\theta)$,
is defined as the integral of $f$ along the line $(x,y)=
t\emph{\textbf{p}}+s(-\sin\theta, \cos\theta)$ on $ \mathbb{R}^2:$
\begin{align}\label{gbhf} \mathcal{R}_{\emph{\textbf{p}}}f(t):=\int^{\infty}_{-\infty}
f(t\cos\theta-s\sin\theta, t\sin\theta+s\cos\theta)ds. \end{align}
If $f\in L^{1}(\mathbb{R}^{2})$ then we  can  prove  that $\mathcal{R}_{\emph{\textbf{p}}}f\in L^{1}(\mathbb{R})$:
 \begin{align}\begin{array}{llll}\displaystyle \int^{\infty}_{-\infty}|\mathcal{R}_{\emph{\textbf{p}}}f(t)|dt&=
 \displaystyle\int^{\infty}_{-\infty} \Big|\int^{\infty}_{-\infty}f((t,s)A)ds\Big|dt\\
 &\displaystyle\leq\int^{\infty}_{-\infty} \int^{\infty}_{-\infty}|f((t,s)A)|dsdt\\
 &=\|f\|_{L^{1}(\mathbb{R}^2)}
 \end{array}\end{align}
 where $A=\left(
\begin{array}{cccc}
   \cos\theta&\sin\theta \\
   -\sin\theta&\cos\theta
\end{array}
\right).$
If $f\in L^{2}(\mathbb{R}^{2})$
 is compactly supported,  then by the  Cauchy-Schwarz inequality   one can check  that $\mathcal{R}_{\emph{\textbf{p}}}f\in L^{2}(\mathbb{R})$.
The Fourier transforms of $\mathcal{R}_{\emph{\textbf{p}}}f$ and   $f$   are  correlated   via
 \begin{align}\label{Radonform} \widehat{\mathcal{R}_{\emph{\textbf{p}}}f}(\xi)=\widehat{f}(\emph{\textbf{p}}^{T}\xi), \quad \xi\in \mathbb{R},\end{align}
where   $\widehat{g}(\gamma):=\int_{\mathbb{R}^{d}}g(\emph{\textbf{x}})e^{-\texttt{i}\emph{\textbf{x}}\cdot \gamma}d\emph{\textbf{x}}$ is the Fourier transform of any function  $g\in L^{p}(\mathbb{R}^{d}).$
It follows from \eqref{Radonform} that    $\widehat{\mathcal{R}_{\emph{\textbf{p}}}f}$ is essentially obtained by taking the cross-section of
$\widehat{f}$ on  the subspace (slice) $\{\emph{\textbf{p}}^{T}\xi: \xi\in \mathbb{R}\}$.

The  central problem of  CT is to use the  Radon transform   to reconstruct the target function $f$.
The most classical reconstruction approach is the filtered backprojection (FBP) (c.f.\cite{Natterer,Natterer1}).
It states that if   $f$ is bandlimited then it can be reconstructed via
\begin{align}\label{pppp}
f(x,y)&=\frac{1}{4\pi^{2}}\int^{2\pi}_{0}\int^{\infty}_{0}\widehat{\mathcal{R}_{\emph{\textbf{p}}}f}(\xi)e^{\texttt{i}\xi(x\cos\theta+y\sin\theta)}\xi d\xi d\theta,
\end{align}
where $\emph{\textbf{p}}=(\cos\theta, \sin\theta)$.


\subsection{Traditional reconstruction  approaches    conducted by Radon transform  at multiple angles and our single angle-based  problem}

Theoretically,    the reconstruction of $f$ via \eqref{pppp}  requires   the  cross-sections   $\widehat{\mathcal{R}_{\emph{\textbf{p}}}f}(\xi)=\widehat{f}(\emph{\textbf{p}}^{T}\xi)$
  for all angles   $\theta\in [0, 2\pi)$.
  In practice, however, what one  can observe are the   samples of a limited number of  cross-sections.
Therefore, the essential problem of CT  is to reconstruct  $f$
by the  samples of  finitely many   cross-sections.
Based on  \eqref{pppp}, many reconstruction algorithms have been designed (c.f. \cite{Shengli,Shengli2,Kak}).
Some recent  alternatives     to FBP have been introduced (e.g. \cite{Unser2,YXu}). Unlike FBP, they are conducted by the samples of
Radon transforms.  For example, based on the orthogonal polynomial system,  Xu
\cite{YXu} established the approach to CT.  McCann   and  Unser \cite{Unser2} established a    spline-based  reconstruction.

Note  that the  samples required  for   the above approaches    are derived from  Radon transforms at  multiple angles, and naturally   we confront
the following problem:
\begin{align} \label{SPPnew} \begin{array}{llll} \hbox{\textbf{Q}}: \hbox{Can  a function be exactly reconstructed  by  its Randon (transform) samples}\\
\quad \quad \hbox{ at a single angle (SA)}?\end{array}\end{align}
If such a single angle CT (SACT) problem  is feasible then    a function can be  determined by its Radon transform at
a SA. It is essentially the injectivity problem of Radon transform (c.f. S. Helgason  \cite{Helgason}).
Due to \eqref{pppp}, we do  not  anticipate     the injectivity   can be achieved
for any function in $L^{2}(\mathbb{R}^{2})$. Instead  it follows from    \cite{Helgason}
that  it can be achieved in some subspaces of  $L^{2}(\mathbb{R}^{2})$. While  there are some results on such an  injectivity
problem (e.g. \cite{DERIKVAND,FLY,Helgason,Homan,Ilmavirta}), the related sampling problem in \eqref{SPPnew} remains  less explored.
In what follows we briefly explain why such a sampling problem is significant from  the real-time imaging perspective.

\subsection{SACT is required  for real-time imaging}
\label{fastimaging}
Optical imaging has been widely used in observing biological objects, such as blood cells (thin objects) and bones (thick objects). The thin objects are commonly imaged directly by refractive-index distributions, which  is  achieved by holographic tomography (HT) (\cite{Kim}). However, for imaging thick  objects,   CT  is  usually employed.

CT   commonly  requires  samples  (measurements) of the light fields penetrating through the object    from different angles (views).
To do so, the object  needs to be rotated by a rotation motor (\cite{Yablon}) or the illumination needs to be scanned by a beam steering device, which not only causes instability for the imaging system, but makes the system bulky (\cite{Antipa,Horisaki}). More importantly, limited by the time of recording fields, rotating objects or scanning illuminations becomes not suitable for real-time imaging, especially for observing fast dynamic events (\cite{Horisaki}). Therefore this naturally leads to the  following  imaging  problem:

 \begin{align} \label{tywm}  \hbox{Under what condition can  CT
be achieved by  the samples of Radon transform at SA?}
\end{align}
Most recently,  R. Horisaki, K. Fujii, and J. Tanida  \cite{Horisaki} established a
SA method for HT  by  inserting a diffuser.
 Note that the samples used  in \cite{Horisaki} are  required to    contain  the diffraction information while
 the Radon samples for CT commonly do not contain (c.f. \cite[section 1]{Mller}). Here the diffraction of light waves
 at an  aperture is computed  by the Fresnel  integral
 $$U(x, y)=\frac{1}{\texttt{i}\lambda z}\int_{\mathbb{R}^{2}}U(x_{0}, y_{0})e
 ^{\texttt{i}\frac{\kappa}{2z}[(x-x_{0})^{2}+(y-y_{0})^{2}]}dx_{0}dy_{0},$$
 where $U(x_{0}, y_{0})$ is the  transmission field, $U(x, y)$ is the field on the view plane,
$z$ is the distance between
the  aperture and the view plane, and  $\lambda$ and $\kappa$ are the wavelength and wave number,
respectively.
 Therefore, the SA method in \cite{Horisaki}
is not applicable for CT.
To the best of our knowledge,  the theoretical study of sampling  problem \eqref{tywm} (or \eqref{SPPnew}) has not been fully explored yet  in optics.

\subsection{The SACT problem   in shift-invariant space (SIS)}
The   shift-invariant space (SIS) is  a   type of  function space that is  widely  applied    in
approximation theory, wavelet analysis and signal processing
(e.g. \cite{Fienup289,Aldroubi3,Zayed0,Chui,Feris,BHBOOK,Matla,SUNQIY1,yunzhang1}).
Throughout this paper, the SIS is denoted by
\begin{align}\label{SISdedingyi}V(\varphi, \mathbb{Z}^2)=\Big\{\sum_{\emph{\textbf{k}}\in \mathbb{Z}^{2}}c_{\emph{\textbf{k}}}\varphi(\cdot-\emph{\textbf{k}}): \sum_{\emph{\textbf{k}}\in \mathbb{Z}^{2}}|c_{\emph{\textbf{k}}}|^{2}<\infty\Big\},\end{align}
where $\varphi\in L^{2}(\mathbb{R}^2)$ is referred to  as the generator.
Correspondingly, our purpose here  is to examine the   SACT problem   \eqref{SPPnew}
in the SIS setting:
\begin{align} \label{SPP} \begin{array}{llll} \hbox{\textbf{Q}}: \hbox{How can  a compactly supported  function $f\in V(\varphi, \mathbb{Z}^2)$ be exactly reconstructed}\\
\quad \quad \hbox{by  its Randon (transform) samples at a single angle (SA)}?\end{array}\end{align}
\subsection{Assumption on the support of  target function, and definition of positive definite  function}
Before introducing our  main contributions, some  denotations are necessary.
Throughout this paper,
suppose that the generator  $\varphi\in  L^{2}(\mathbb{R}^{2})$ is compactly supported   such that
\begin{align}\label{KKK45}\hbox{supp}(\varphi)\subseteq[N_{1}, M_{1}]\times[N_{2}, M_{2}],\end{align}
and the shift system $\{\varphi(\cdot-\emph{\textbf{k}}): \emph{\textbf{k}}\in \mathbb{Z}^2\}$
is linearly independent in $L^{2}(\mathbb{R}^{2})$.
Moreover, the arbitrary   target function   $f\in V(\varphi, \mathbb{Z}^2)$ is compactly supported
such that \begin{align}\label{KKK46} \hbox{supp}(f)\subseteq [a_{1}, b_{1}]\times [a_{2}, b_{2}].\end{align}
By \eqref{KKK45} and \eqref{KKK46}, there exists a finite  sequence  $\{c_{\emph{\textbf{k}}_{l}}, l=1, \ldots, \#E\}\subseteq \mathbb{C}$ such that  $f$ can be expressed as
\begin{align}\label{BVZXC1234} f=\sum^{\#E}_{l=1}c_{\emph{\textbf{k}}_{l}}\varphi(\cdot-\emph{\textbf{k}}_{l}),\end{align}
where $ E:=\{\emph{\textbf{k}}_{1}, \ldots, \emph{\textbf{k}}_{\#E}\}=\big\{\big[\lceil a_{1}-M_{1}\rceil, \lfloor b_{1}-N_{1}\rfloor\big]\times
\big[\lceil a_{2}-M_{2}\rceil, \lfloor b_{2}-N_{2}\rfloor\big]\big\}\cap \mathbb{Z}^{2}
$,
$\#E$ is the cardinality of $E$, and
$\lceil x\rceil$ ($\lfloor x\rfloor$)
is  the smallest (largest)  integer  that is not smaller (larger)  than $x\in \mathbb{R}$, respectively.
In what follows we explain that the assumption in \eqref{KKK46} is reasonable.

\begin{rem}\label{support} Throughout this paper, as in \eqref{KKK46}  we assume that  the function  to be reconstructed  is  compactly
 supported and its supports is  contained in a known rectangle.
 Such an   assumption     is reasonable  for CT (e.g. \cite{YXu}) since   from the optical  perspective, the function to be reconstructed in CT is the difference between the  refractive index distribution of the object and that of the surrounding medium  (c.f. \cite{Mller}), and consequently it is generally   compactly supported. Moreover, the support of the function is known  when   the boundary of the
 object is clear.
\end{rem}

In what follows we recall the   definition of  positive definite     functions
which have  been  extensively   applied to scattered data interpolation,  approximation theory and harmonic analysis (e.g. \cite{QSIS4,Hinrichs,Karimi,Wendlan}).
We say that a function $\phi: \mathbb{R}^{d}\longrightarrow \mathbb{C}$   is positive definite  if  for all $N\in \mathbb{N}$, all sets $X=\{\emph{\textbf{x}}_{1}, \emph{\textbf{x}}_{2}, \ldots, \emph{\textbf{x}}_{N}\}\subseteq \mathbb{R}^{d}$, and all vectors  $\textbf{0}\neq (\alpha_{1}, \ldots, \alpha_{N})^{T}\in \mathbb{C}^{N}$, the quadratic form $\sum^{N}_{j=1}\sum^{N}_{k=1}\alpha_{j}\overline{\alpha}_{k}\phi(\emph{\textbf{x}}_{j}-\emph{\textbf{x}}_{k})>0.$
We will  recall   more properties of  positive definite functions in subsection \ref{moreqe}.

\subsection{Main contributions and their common features}
There are five main results in this paper. They  will be established in
subsections \ref{1stresult}, \ref{2ndresult}, \ref{3rdresult}, \ref{neirong2} and
\ref{vanishingqingkuang}. From the perspective of the properties satisfying by the generator $\varphi$,
these  main results  are organized briefly as follows.

$\bullet$ \textbf{The nonvanishing case} ($\widehat{\varphi}(\textbf{0})=\int_{\mathbb{R}^{2}}\varphi(\emph{\textbf{x}})d\emph{\textbf{x}}\neq0$).
The  set $\Lambda$ of eligible   direction vectors is constructed  for the  SACT of any
$f\in V(\varphi, \mathbb{Z}^2)$ satisfying  \eqref{KKK46}. It is  proved that  for any  $\emph{\textbf{p}}\in
\Lambda$,  there exists a sampling set $X_{\emph{\textbf{p}}}\subseteq \mathbb{R}$ (having the cardinality $\#E$)
such that   $f $   can be determined uniquely by its SA Radon samples at $X_{\emph{\textbf{p}}}$,
where the set $E$ is correlated with  $f$ via \eqref{BVZXC1234}.
Additionally, if $\varphi\in C^{1}(\mathbb{R}^{2})$ then $X_{\emph{\textbf{p}}}$
is   constructed explicitly.

$\bullet$ \textbf{The vanishing case} ($\widehat{\varphi}(\textbf{0})=\int_{\mathbb{R}^{2}}\varphi(\emph{\textbf{x}})d\emph{\textbf{x}}=0$).
As in  the nonvanishing case,      the set $\Omega$ of eligible   direction vectors is constructed
 for the SACT of any   $f\in V(\varphi, \mathbb{Z}^2)$ satisfying \eqref{KKK46}. The set $\Omega$ is different from the above $\Lambda$
 in the nonvanishing case.  For any $\emph{\textbf{p}}\in \Omega$,  the existence of the  eligible  sampling set $X_{\emph{\textbf{p}}}\subseteq \mathbb{R}$ (also having the cardinality $\#E$)  is proved.
 Consequently,
 $f$   can be determined uniquely by its SA Radon samples at $X_{\emph{\textbf{p}}}$.
 Additionally, for the case that  $\varphi\in C^{1}(\mathbb{R}^{2})$  the sampling set     $X_{\emph{\textbf{p}}}$
is constructed explicitly.

 $\bullet$ \textbf{The positive definite generator case}.   Suppose  that  $\varphi$ is positive definite.
 Eligible direction vector sets are constructed  for the nonvanishing and vanishing cases, respectively.
 For any eligible direction vector $\emph{\textbf{p}}$, the target function  $f\in V(\varphi, \mathbb{Z}^2)$ satisfying \eqref{KKK46} can be determined uniquely by its SA samples
 at $\{\emph{\textbf{p}}\emph{\textbf{k}}_{1}, \ldots, \emph{\textbf{p}}\emph{\textbf{k}}_{\#E}\}$,
 where $\{\emph{\textbf{k}}_{1}, \ldots, \emph{\textbf{k}}_{\#E}\}=E$
 is correlated with $f$ via \eqref{BVZXC1234}.

\begin{rem}\label{CVBNM}
There are two common features of the above three main  contributions. (1) The samples for CT are derived from the
SA Radon transform but not from  multiple-angle Radon transforms. (2) Note that $\#E$  Radon samples are used to
determined $f$.
Recall again that $\{\varphi(\cdot-\emph{\textbf{k}}): \emph{\textbf{k}}\in \mathbb{Z}^2\}$
is linearly independent, and
by
\eqref{BVZXC1234},  $f=\sum^{\#E}_{l=1}c_{\emph{\textbf{k}}_{l}}\varphi(\cdot-\emph{\textbf{k}}_{l})$.
Then  $f$ is determined uniquely  by the  $\#E$ coefficients: $c_{\emph{\textbf{k}}_{1}}, \ldots,
c_{\emph{\textbf{k}}_{\#E}}$.  Therefore we only use the \textbf{minimum} number of samples in our SA-based
reconstruction.
\end{rem}

\subsection{Outline of the paper}
 In Theorem \ref{main1}  a sufficient and necessary condition is
 established on the pair $(\varphi, \emph{\textbf{p}})$ such that, an arbitrary  compactly supported target function  $f\in V(\varphi, \mathbb{Z}^2)$
 satisfying \eqref{KKK46} can be determined uniquely by its SA Radon transform $\mathcal{R}_{\emph{\textbf{p}}}f$.
 With the help of Paley-Wiener theorem, it will be explained  in subsection \ref{bupingfan} that
 such a determination problem is absolutely nontrivial.
 Based on Theorem \ref{main1} we will  address the SACT sampling  problem  \eqref{SPP} in section \ref{boxspline}
 and section \ref{moreq23}.

 Section \ref{boxspline} concerns on the  problem \eqref{SPP} for compactly supported functions in $V(\varphi, \mathbb{Z}^2)$
 where $\varphi$ is a general generator.
   Theorem \ref{theorem123} establishes  a sufficient and necessary    condition  on  $(\varphi, \emph{\textbf{p}}, X_{\emph{\textbf{p}}})$  such that the SACT sampling \eqref{SPP} can be achieved by the SA Radon samples at   $X_{\emph{\textbf{p}}}$.
 For the  general generator $\varphi$ case, a natural problem is the existence of $\emph{\textbf{p}}$
 and $X_{\emph{\textbf{p}}}$.
The answer to this problem will be addressed in  Theorem  \ref{yibanshengcyuan}  for the nonvanshing ($\widehat{\varphi}(\textbf{0})\neq0$) case and in Theorem  \ref{yibanshengcyuanYY} for the vanishing ($\widehat{\varphi}(\textbf{0})=0$) case, where a set of eligible  direction vectors $\Lambda$ (respectively,  $\Omega$) is provided in Theorem  \ref{yibanshengcyuan} (respectively,  Theorem  \ref{yibanshengcyuanYY}) such that for any
$\emph{\textbf{p}}\in \Lambda$ (or $\emph{\textbf{p}}\in \Omega$) there exists  a sampling set $X_{\emph{\textbf{p}}}$,
and consequently $f$ can be determined uniquely by its SA Radon  samples
at $X_{\emph{\textbf{p}}}.$ In particular,  an explicit construction of a sampling set $X_{\emph{\textbf{p}}}$ was presented in   Theorem \ref{xianshibiaoshi}
and Proposition \ref{hbvcxz} for the case when $\varphi\in C^{1}(\mathbb{R}^{2})$.


The purpose of Section \ref{moreq23} is to address the condition on $(\varphi, \emph{\textbf{p}})$
such that the  compactly supported  $f\in V(\varphi, \mathbb{Z}^2)$ satisfying \eqref{KKK46} can be determined uniquely by its SA Radon samples at  $\{\emph{\textbf{p}}\emph{\textbf{k}}_{1}, \ldots, \emph{\textbf{p}}\emph{\textbf{k}}_{\#E}\}$,
 where $\{\emph{\textbf{k}}_{1}, \ldots, \emph{\textbf{k}}_{\#E}\}=E$.
Such a  condition is established in Theorem \ref{HHKK}.
Based on Theorem \ref{HHKK}, we address the case that $\varphi$ is positive definite in
Theorems \ref{ddgo} and \ref{eligiblevectors}. In  particular,  Theorem \ref{ddgo} applies to the nonvanishing case
while Theorem \ref{eligiblevectors} applies to the vanishing case.

 %
%


\section{Preliminary}

\subsection{On the  support of  $\mathcal{R}_{\emph{\textbf{p}}}f$}
For a function $f\in  L^{1}(\mathbb{R}^{2})$ and a  direction  vector  $\emph{\textbf{p}}=(\cos\theta, \sin\theta)$,
 motivated by  \cite{Han1,BHBOOK} we next address the relationship  between     $\mathcal{R}_{\emph{\textbf{p}}}f$ and $f$ in the  spatial  domain. Denote the singular value decomposition (SVD) of $\emph{\textbf{p}}$
by $\emph{\textbf{p}}=\Sigma V^{T}$, where  $V$ is a  $2\times 2$ real-valued unitary
matrix  and $\Sigma=(
1, 0)$.
Now it follows from     \cite{Han1,BHBOOK} that
\begin{align}\label{HVCXZ}
\mathcal{R}_{\emph{\textbf{p}}}f=\Sigma(V^{T}f),
\end{align}
where $V^{T}f(\emph{\textbf{x}})=f((V^{T})^{-1}\emph{\textbf{x}})$ with  $\emph{\textbf{x}}=(x_{1},   x_{2})^{T}\in \mathbb{R}^{2}$,  and for any $g$ on $\mathbb{R}^{2}$ the function  $\Sigma g$ on $\mathbb{R}$ is defined by
\begin{align}\label{cvb}
\Sigma g (x_{1})=\int_{\mathbb{R}} g(x_{1},
x_{2})dx_{2}.
\end{align}
The following remark is derived  from \cite[section 1]{Han1}.
\begin{rem}\label{JHR}
If $f\in L^2(\mathbb{R}^{2})$  is compactly supported then its Radon transform $\mathcal{R}_{\emph{\textbf{p}}}f$
can be  expressed as $\Sigma(V^{T}f)$ in \eqref{HVCXZ}.
\end{rem}

It has been stated  in subsection \ref{subsection1.1} that
if $f\in
L^2(\mathbb{R}^2)$ is compactly supported then $\mathcal{R}_{\emph{\textbf{p}}}f\in L^{2}(\mathbb{R})$. We include its proof together with support information in the following lemma.


\begin{lem}\label{pouy}
Suppose that $f\in L^{2}(\mathbb{R}^{2})$  with   $\hbox{supp}(f)\subseteq [a_{1}, b_{1}]\times [a_{2}, b_{2}]$.
Then
\begin{align}\label{qujian} \hbox{supp}(\mathcal{R}_{\emph{\textbf{p}}}f)\subseteq [-\sqrt{2}\max\{|b_{i}|, |a_{i}|: i=1, 2\}, \sqrt{2}\max\{|b_{i}|, |a_{i}|: i=1, 2\}],\end{align}
and $\mathcal{R}_{\emph{\textbf{p}}}f\in L^{2}(\mathbb{R})$.
\end{lem}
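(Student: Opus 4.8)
The plan is to prove both assertions using the spatial-domain description of the Radon transform recorded in \eqref{HVCXZ}, together with elementary geometry. First I would fix $R:=\sqrt{2}\max\{|a_i|,|b_i|:i=1,2\}$ and observe that the support hypothesis $\hbox{supp}(f)\subseteq[a_1,b_1]\times[a_2,b_2]$ forces $\hbox{supp}(f)$ to lie inside the closed disk of radius $R$ centered at the origin, since every point $(x_1,x_2)$ in that rectangle satisfies $x_1^2+x_2^2\le \max\{|a_1|,|b_1|\}^2+\max\{|a_2|,|b_2|\}^2\le R^2$. Because the orthogonal matrix $V^T$ is an isometry of $\mathbb{R}^2$ fixing the origin, the function $V^Tf$ (defined in the excerpt by $V^Tf(\emph{\textbf{x}})=f((V^T)^{-1}\emph{\textbf{x}})$) is supported in the same disk of radius $R$.

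Next I would use \eqref{cvb} and \eqref{HVCXZ}: $\mathcal{R}_{\emph{\textbf{p}}}f(x_1)=\Sigma(V^Tf)(x_1)=\int_{\mathbb{R}}(V^Tf)(x_1,x_2)\,dx_2$. If $|x_1|>R$ then for every $x_2\in\mathbb{R}$ the point $(x_1,x_2)$ has norm $\ge|x_1|>R$, hence lies outside the support disk of $V^Tf$, so the integrand vanishes identically and $\mathcal{R}_{\emph{\textbf{p}}}f(x_1)=0$. This gives \eqref{qujian}. For the $L^2$ claim, I would apply the Cauchy--Schwarz inequality on the bounded $x_2$-interval over which $(x_1,x_2)$ can meet the support disk: for $|x_1|\le R$ the integration in $x_2$ is effectively over an interval of length at most $2R$, so $|\mathcal{R}_{\emph{\textbf{p}}}f(x_1)|^2\le 2R\int_{\mathbb{R}}|(V^Tf)(x_1,x_2)|^2\,dx_2$. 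Integrating in $x_1$ over $[-R,R]$ and invoking Tonelli gives $\|\mathcal{R}_{\emph{\textbf{p}}}f\|_{L^2(\mathbb{R})}^2\le 2R\,\|V^Tf\|_{L^2(\mathbb{R}^2)}^2=2R\,\|f\|_{L^2(\mathbb{R}^2)}^2<\infty$, the last equality because $V^T$ is unitary on $\mathbb{R}^2$.

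There is no serious obstacle here; the proof is essentially a bookkeeping exercise. The only point requiring a little care is making precise the almost-everywhere statements: $V^Tf$ is only defined up to a null set, so "supported in the disk of radius $R$" should be read as "vanishing a.e. outside the disk," and the pointwise vanishing of $\mathcal{R}_{\emph{\textbf{p}}}f(x_1)$ for $|x_1|>R$ should likewise be understood a.e. in $x_1$ (or genuinely pointwise once one notes the slice integral of an a.e.-zero integrand is zero). One should also confirm that $\mathcal{R}_{\emph{\textbf{p}}}f(x_1)$ is well defined for a.e. $x_1$, which follows from Fubini's theorem since $V^Tf\in L^1_{loc}$ and is compactly supported; the Cauchy--Schwarz bound above then simultaneously certifies finiteness of the slice integrals and membership in $L^2(\mathbb{R})$.
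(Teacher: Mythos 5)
Your proof is correct and follows essentially the same route as the paper's: both pass to the spatial-domain formula $\mathcal{R}_{\emph{\textbf{p}}}f=\Sigma(V^{T}f)$, use the unitarity of $V$ to confine the support of $V^{T}f$ to the disk of radius $\sqrt{2}\max\{|a_{i}|,|b_{i}|:i=1,2\}$ (giving \eqref{qujian}), and then apply the Cauchy--Schwarz inequality over the bounded $x_{2}$-range to conclude $\mathcal{R}_{\emph{\textbf{p}}}f\in L^{2}(\mathbb{R})$. Your bookkeeping of the constant ($2R$ rather than the $\sqrt{2}\max\{\cdot\}$ appearing in the paper's display \eqref{en123}) is in fact the more accurate one, but this has no bearing on the finiteness conclusion.
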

\begin{proof}
Let $V$ be the real unitary matrix from the SVD of $\emph{\textbf{p}}$ such that
$\emph{\textbf{p}}=\Sigma V^{T}$ with $\Sigma=(
1, 0)$.
Denote $V^{T}f$ by $g.$ Then for any $x_{1}\in \mathbb{R},$
 we have
\begin{align}\label{en123456}
\begin{array}{llll}
\mathcal{R}_{\emph{\textbf{p}}}f(x_{1})&=\displaystyle\Sigma g(x_{1})\\
&\displaystyle=\int_{\mathbb{R}} g(x_{1},
x_{2})dx_{2}\\
&\displaystyle=\int_{\mathbb{R}} [V^{T}f](x_{1},
x_{2})dx_{2}\\
&\displaystyle=\int_{\mathbb{R}} f(V(x_{1},
x_{2})^{T})dx_{2}, \ (\ref{en123456} A)
\end{array}
\end{align}
where  the first and  second  equalities   are  derived from Remark \ref{JHR}  and  \eqref{cvb}, respectively.
It follows from   $\hbox{supp}(f)\subseteq [a_{1}, b_{1}]\times [a_{2}, b_{2}]$ that  for any $\emph{\textbf{x}}\in \hbox{supp}(f)$, we have
\begin{align}\label{fvxczx}\|\emph{\textbf{x}}\|_{2}\leq\sqrt{2}\max\{|b_{i}|, |a_{i}|: i=1, 2\}.\end{align}
It follows from   \eqref{en123456} and the fact that
$V$ is  a unitary matrix, we have  $|x_{1}|\leq\sqrt{2}\max\{|b_{i}|, |a_{i}|: i=1, 2\}.$  Then   \eqref{qujian} holds.

Define $G(x_{1}, x_{2}):=f(V(x_{1}, x_{2})^{T})$. By \eqref{fvxczx} and $V$ being  a real unitary matrix,  we have
\begin{align}\label{uytt} |x_{2}|\leq
\sqrt{2}\max\{|b_{i}|, |a_{i}|: i=1, 2\}\end{align}
for any $(x_{1}, x_{2})^{T}\in \hbox{supp}(G)$.
Moreover,
\begin{align}\label{en123}
\begin{array}{llll}
\|\mathcal{R}_{\emph{\textbf{p}}}f\|_{L^{2}(\mathbb{R})}^{2}&=\|\Sigma(V^{T}f)\|_{L^{2}(\mathbb{R})}^{2}\\
&\displaystyle=\int_{\mathbb{R}}\big|\int_{\mathbb{R}} [V^{T}f](x_{1},
x_{2})dx_{2}\big|^{2} dx_{1}\\
&\displaystyle\leq\sqrt{2}\max\{|b_{i}|, |a_{i}|: i=1, 2\}\int_{\mathbb{R}}\int_{\mathbb{R}} |[V^{T}f](x_{1},
x_{2})|^{2}dx_{2}dx_{1}\\
&\displaystyle=\sqrt{2}\max\{|b_{i}|, |a_{i}|: i=1, 2\}\|f\|_{L^{2}(\mathbb{R}^2)}^{2}\\
&<\infty,
\end{array}
\end{align}
where the first inequality is derived from \eqref{uytt} and  the Cauchy-Schwarz inequality.
This completes the proof.
\end{proof}

\subsection{(Quasi) Shift-invariant space}\label{Rieszbound}
For a generator    $\varphi\in  L^{2}(\mathbb{R}^{2})$, as in \eqref{SISdedingyi}
its  associated  shift-invariant space (SIS) $V(\varphi, \mathbb{Z}^{2})$ is defined to be
\begin{align}\label{SISDE} V(\varphi, \mathbb{Z}^{2}):=\Big\{\sum_{\emph{\textbf{k}}\in \mathbb{Z}^{2}}c_{\emph{\textbf{k}}}\varphi(\cdot-\emph{\textbf{k}}): \{c_{\emph{\textbf{k}}}\}_{\emph{\textbf{k}}\in \mathbb{Z}^{2}}\in \ell^{2}(\mathbb{Z}^{2})\Big\},\end{align}
where $\ell^{2}(\mathbb{Z}^2)$ is the space of square summable sequences
such that any  $\{c_{\emph{\textbf{k}}}\}_{\emph{\textbf{k}}\in \mathbb{Z}^{2}}\in \ell^{2}(\mathbb{Z}^{2})$
satisfies $\|\{c_{\emph{\textbf{k}}}\}_{\emph{\textbf{k}}\in \mathbb{Z}^{2}}\|_{\ell^{2}(\mathbb{Z}^2)}=(\sum_{\emph{\textbf{k}}\in \mathbb{Z}^2}|c_{\emph{\textbf{k}}}|^{2})^{1/2}<\infty.$
As mentioned in section \ref{diyijiesection}, throughout the paper  the system
$\{\varphi(\cdot-\emph{\textbf{k}}): \emph{\textbf{k}}\in \mathbb{Z}^2\}$ is required to be linearly independent in $L^2(\mathbb{R}^2)$.
A sufficient condition for the linear independence is that $\{\varphi(\cdot-\emph{\textbf{k}}): \emph{\textbf{k}}\in \mathbb{Z}^2\}$
satisfies the so called Riesz basis condition, namely, there exist constants  $0<C_{1}\leq C_{2}<\infty$
such that for any  $\{c_{\emph{\textbf{k}}}\}_{\emph{\textbf{k}}\in \mathbb{Z}^2}\in \ell^{2}(\mathbb{Z}^2)$ there holds
\begin{align}\label{chuan} C_{1}\|\{c_{\emph{\textbf{k}}}\}_{\emph{\textbf{k}}\in \mathbb{Z}^{2}}\|^{2}_{\ell^{2}(\mathbb{Z}^2)}\leq\big\|\sum_{\emph{\textbf{k}}\in \mathbb{Z}^{2}}c_{\emph{\textbf{k}}}\varphi(\cdot-\emph{\textbf{k}})\big\|^{2}_{L^{2}(\mathbb{R}^{2})}\leq C_{2}\|\{c_{\emph{\textbf{k}}}\}_{\emph{\textbf{k}}\in \mathbb{Z}^{2}}\|^{2}_{\ell^{2}(\mathbb{Z}^2)}.\end{align}

For a generator $\varphi\in  L^{2}(\mathbb{R}^{2})$ and the shift set  $\mathcal{X}=\{x_{\emph{\textbf{k}}}\}_{\emph{\textbf{k}}\in \mathbb{Z}^{2}}\subseteq \mathbb{R}^{2}$, its  associated quasi shift-invariant space  (QSIS) is defined as
\begin{align}\label{QSIS}V(\varphi, \mathcal{X}):=\Big\{\sum_{\emph{\textbf{k}}\in \mathbb{Z}^{2}}c_{\emph{\textbf{k}}}\varphi(\cdot-x_{\emph{\textbf{k}}}): \{c_{\emph{\textbf{k}}}\}_{\emph{\textbf{k}}\in \mathbb{Z}^{2}}\in \ell^{2}(\mathbb{Z}^{2})\Big\}.\end{align}
If $\mathcal{X}=\mathbb{Z}^{2}$ then $V(\varphi, \mathcal{X})$ degenerates to a SIS.
As implied  in \cite{QSIS4}, the recovery    for the functions in   $ V(\varphi, \mathcal{X})$ ($\mathcal{X}\neq\mathbb{Z}^{2}$)
is much more complicated than  that for  the SIS. For such a recovery,   by
\cite[section 3.1(A1)]{QSIS4} it is required that  $\varphi$ is    positive definite.



\subsection{Sobolev smoothness of a function}\label{sobolevsmoothness}
For any $\varsigma\in\mathbb{R}$,  the Sobolev space $H^{\varsigma}(\mathbb{R}^{d})$ (c.f.\cite{BHS,LHYH,Youfa})
is defined as
\begin{align}\begin{array}{lllll}\label{defi_s}\displaystyle  H^{\varsigma}(\mathbb{R}^{d}):=\Big\{f: \int_{\mathbb{R}^{d}}|\widehat{f}(\xi)|^{2} (1+\|\xi\|_{2}^{2})^{\varsigma}d\xi<\infty\Big\}.\end{array}\end{align}
Clearly, if $\varsigma\geq0$ then $H^{\varsigma}(\mathbb{R}^{d})\subseteq L^2(\mathbb{R}^d)$.
 The deduced norm is defined   by
\begin{align}\notag \begin{array}{lllll} \displaystyle \|f\|_{H^{\varsigma}(\mathbb{R}^{d})}:=\frac{1}{(2\pi)^{d/2}}\Big(\int_{\mathbb{R}^{d}}|\widehat{f}(\xi)|^{2}(1+\|\xi\|_{2}^{2})^{\varsigma}d\xi\Big)^{1/2}, \quad \forall f\in H^{\varsigma}(\mathbb{R}^{d}).\end{array}\end{align}
The Sobolev smoothness of $f$ is defined as $\nu_{2}(f):=\sup\{\varsigma: f\in H^{\varsigma}(\mathbb{R}^{d})\}$.
The following lemma is derived from \cite[Lemma 2.4]{Hanprojectionold}. It states that for a compactly supported $f\in L^2(\mathbb{R}^2)$,
the Sobolev smoothness  of $\mathcal{R}_{\emph{\textbf{p}}}f$ is not smaller  than  $\nu_{2}(f)$.

\begin{lem} \label{fff987f}
Suppose that $f\in H^{\varsigma}(\mathbb{R}^{2}), \varsigma\geq0$ is compactly supported.
Then $\nu_{2}(\mathcal{R}_{\emph{\textbf{p}}}f)\geq \nu_{2}(f)$ for any direction vector $\emph{\textbf{p}}.$
\end{lem}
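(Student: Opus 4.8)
The plan is to prove the sharper statement that, for the given $\varsigma\ge 0$, compact support of $f$ already forces $\mathcal{R}_{\emph{\textbf{p}}}f\in H^{\varsigma}(\mathbb{R})$. Since $\nu_{2}(g)=\sup\{\varsigma:g\in H^{\varsigma}\}$ and the argument will apply to every $\varsigma$ with $f\in H^{\varsigma}(\mathbb{R}^{2})$, the claimed inequality $\nu_{2}(\mathcal{R}_{\emph{\textbf{p}}}f)\ge\nu_{2}(f)$ then follows immediately. By the Fourier slice identity \eqref{Radonform} one has $\widehat{\mathcal{R}_{\emph{\textbf{p}}}f}(\xi)=\widehat{f}(\xi\emph{\textbf{p}})$, so, up to the fixed constant in the norm,
\[
\|\mathcal{R}_{\emph{\textbf{p}}}f\|_{H^{\varsigma}(\mathbb{R})}^{2}\;\asymp\;\int_{\mathbb{R}}\big|\widehat{f}(\xi\emph{\textbf{p}})\big|^{2}(1+\xi^{2})^{\varsigma}\,d\xi ,
\]
a weighted $L^{2}$-integral of $\widehat{f}$ along the line $\ell_{\emph{\textbf{p}}}=\{\xi\emph{\textbf{p}}:\xi\in\mathbb{R}\}$. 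Since $\ell_{\emph{\textbf{p}}}$ has planar measure zero, this quantity cannot be bounded by $\|f\|_{H^{\varsigma}(\mathbb{R}^{2})}$ for a general $f\in H^{\varsigma}$; the whole point is to exploit that $\mathrm{supp}(f)$ is compact.

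First I would fix $\phi\in C^{\infty}_{c}(\mathbb{R}^{2})$ with $\phi\equiv 1$ on a neighbourhood of $\mathrm{supp}(f)$, so that $f=\phi f$ and hence $\widehat{f}=(2\pi)^{-2}\,\widehat{\phi}\ast\widehat{f}$, i.e. $\widehat{f}(\xi\emph{\textbf{p}})=(2\pi)^{-2}\int_{\mathbb{R}^{2}}\widehat{\phi}(\xi\emph{\textbf{p}}-\eta)\widehat{f}(\eta)\,d\eta$. Using $\|\emph{\textbf{p}}\|_{2}=1$ together with Peetre's inequality $(1+\|a\|_{2}^{2})^{\varsigma/2}\le 2^{\varsigma/2}(1+\|a-b\|_{2}^{2})^{\varsigma/2}(1+\|b\|_{2}^{2})^{\varsigma/2}$ (valid for $\varsigma\ge 0$) with $a=\xi\emph{\textbf{p}}$ and $b=\eta$, I would move the weight $(1+\xi^{2})^{\varsigma/2}=(1+\|\xi\emph{\textbf{p}}\|_{2}^{2})^{\varsigma/2}$ inside the integral to obtain the pointwise bound
\[
(1+\xi^{2})^{\varsigma/2}\,\big|\widehat{\mathcal{R}_{\emph{\textbf{p}}}f}(\xi)\big|\;\le\;2^{\varsigma/2}(2\pi)^{-2}\,(\Phi\ast F)(\xi\emph{\textbf{p}}),
\]
where $\Phi:=|\widehat{\phi}|\,(1+\|\cdot\|_{2}^{2})^{\varsigma/2}$ is a Schwartz function on $\mathbb{R}^{2}$ (because $\widehat{\phi}$ is), and $F:=|\widehat{f}|\,(1+\|\cdot\|_{2}^{2})^{\varsigma/2}$ belongs to $L^{2}(\mathbb{R}^{2})$ --- this last fact being exactly the hypothesis $f\in H^{\varsigma}(\mathbb{R}^{2})$. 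Hence it suffices to show that the restriction of $\Phi\ast F$ to $\ell_{\emph{\textbf{p}}}$ lies in $L^{2}(\ell_{\emph{\textbf{p}}})$.

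For that last step I would rotate coordinates so that $\emph{\textbf{p}}=(1,0)$ (rotation preserves both the Schwartz class and the $L^{2}$-norm, and the weight $(1+\|\cdot\|_{2}^{2})^{\varsigma/2}$ is rotation invariant), and then, for each fixed first coordinate, apply the Cauchy--Schwarz inequality in the transverse variable. This yields $|(\Phi\ast F)(\xi,0)|\le (A\ast B)(\xi)$, where $A(s):=\big(\int_{\mathbb{R}}|\Phi(s,u)|^{2}\,du\big)^{1/2}$ is rapidly decreasing, hence in $L^{1}(\mathbb{R})$, because $\Phi$ is Schwartz, and $B(s):=\big(\int_{\mathbb{R}}|F(s,u)|^{2}\,du\big)^{1/2}$ satisfies $\|B\|_{L^{2}(\mathbb{R})}=\|F\|_{L^{2}(\mathbb{R}^{2})}<\infty$. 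Young's convolution inequality then gives $\|(\Phi\ast F)(\,\cdot\,\emph{\textbf{p}})\|_{L^{2}(\mathbb{R})}\le\|A\|_{L^{1}(\mathbb{R})}\|B\|_{L^{2}(\mathbb{R})}<\infty$, and combining this with the pointwise bound above shows $\mathcal{R}_{\emph{\textbf{p}}}f\in H^{\varsigma}(\mathbb{R})$ with a quantitative norm estimate, completing the proof.

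The main obstacle --- and the only place where compactness of the support is genuinely used --- is precisely this restriction-to-a-line estimate: a priori $\widehat{f}$ restricted to the measure-zero set $\ell_{\emph{\textbf{p}}}$ retains no $L^{2}$ information, and it is only after replacing $\widehat{f}$ by the convolution $\widehat{\phi}\ast\widehat{f}$ (which is legitimate because $f$ has compact support) that the rapid transverse decay of the Schwartz factor $\widehat{\phi}$ supplies exactly the cross-line regularity making the trace square integrable, and moreover with no loss in the smoothness order $\varsigma$. I note that using only the compact support of $\mathcal{R}_{\emph{\textbf{p}}}f$ furnished by Lemma \ref{pouy} would not suffice --- that route merely reproduces the quantity one is trying to bound --- so it is genuinely the two-dimensional support of $f$ that is being exploited.
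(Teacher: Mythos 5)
Your argument is correct, and it is genuinely different from what the paper does: the paper offers no proof at all for this lemma, instead deriving it from B.~Han's projection result (\cite[Lemma 2.4]{Hanprojectionold}), via the identification $\mathcal{R}_{\emph{\textbf{p}}}f=\Sigma(V^{T}f)$ of the single-angle Radon transform with a coordinate projection of a rotated copy of $f$ (Remark \ref{JHR}, \eqref{HVCXZ}). Your route is a self-contained Fourier-side estimate: write $f=\phi f$ with a fixed cutoff $\phi\in C^{\infty}_{c}$, so $\widehat f=(2\pi)^{-2}\widehat\phi\ast\widehat f$, transfer the weight by Peetre's inequality, and control the trace of $\Phi\ast F$ on the line $\ell_{\emph{\textbf{p}}}$ by Cauchy--Schwarz in the transverse variable followed by Young's inequality; this proves the stronger quantitative statement $\|\mathcal{R}_{\emph{\textbf{p}}}f\|_{H^{\varsigma}(\mathbb{R})}\le C(\phi,\varsigma)\|f\|_{H^{\varsigma}(\mathbb{R}^{2})}$ for every $\varsigma\ge0$, with a constant depending only on the support (through $\phi$) and uniform in $\emph{\textbf{p}}$, from which $\nu_{2}(\mathcal{R}_{\emph{\textbf{p}}}f)\ge\nu_{2}(f)$ follows by letting $\varsigma\uparrow\nu_{2}(f)$. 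What your approach buys is transparency and a norm bound with no loss of smoothness order, independent of the projection-method machinery; what the citation buys the authors is brevity. Two cosmetic points: $\Phi=|\widehat\phi|(1+\|\cdot\|_{2}^{2})^{\varsigma/2}$ need not be Schwartz (the absolute value destroys smoothness), but only its rapid decay is used, so nothing breaks; and you should note explicitly that the product formula $\widehat{\phi f}=(2\pi)^{-2}\widehat\phi\ast\widehat f$ is legitimate because $\phi,f\in L^{2}(\mathbb{R}^{2})$ (here $\varsigma\ge0$ and $f$ is compactly supported, so $f\in L^{1}\cap L^{2}$ and the slice identity \eqref{Radonform} applies as well).
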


With the help of Lemma \ref{fff987f} we next address the continuity of the Radon transform.

\begin{prop}\label{lianxuxing}
Suppose that $f\in H^{\varsigma}(\mathbb{R}^{d})$ such that $\varsigma>d/2$. Then
we have

(1) $f$ is   continuous.
\newline
Suppose that  $g\in H^{s}(\mathbb{R}^{2})$
with $s>1/2$ is compactly supported. Then we have

(2) the  Radon transform $\mathcal{R}_{\emph{\textbf{p}}}g$ is continuous for any direction vector  $\emph{\textbf{p}}.$
\end{prop}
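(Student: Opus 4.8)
The plan is to prove the two parts of Proposition \ref{lianxuxing} essentially independently, with the second part reduced to the first by invoking Lemma \ref{fff987f}.

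For part (1), I would use the classical Sobolev embedding argument via the Fourier transform. The hypothesis $f \in H^{\varsigma}(\mathbb{R}^d)$ with $\varsigma > d/2$ means $\int_{\mathbb{R}^d}|\widehat{f}(\xi)|^2(1+\|\xi\|_2^2)^{\varsigma}\,d\xi < \infty$. The key step is to show $\widehat{f} \in L^1(\mathbb{R}^d)$: by Cauchy--Schwarz,
\begin{align}\notag
\int_{\mathbb{R}^d}|\widehat{f}(\xi)|\,d\xi \leq \Big(\int_{\mathbb{R}^d}|\widehat{f}(\xi)|^2(1+\|\xi\|_2^2)^{\varsigma}\,d\xi\Big)^{1/2}\Big(\int_{\mathbb{R}^d}(1+\|\xi\|_2^2)^{-\varsigma}\,d\xi\Big)^{1/2},
\end{align}
and the second integral converges precisely because $2\varsigma > d$ (passing to polar coordinates, the radial integrand behaves like $r^{d-1-2\varsigma}$ near infinity, which is integrable when $2\varsigma - (d-1) > 1$, i.e. $\varsigma > d/2$). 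Once $\widehat{f} \in L^1(\mathbb{R}^d)$, Fourier inversion represents $f$ (a.e.) as the integral of $\widehat{f}(\xi)e^{\texttt{i}\langle x,\xi\rangle}$ against $\xi$, and the dominated convergence theorem shows this is a continuous function of $x$; hence $f$ agrees a.e. with a continuous function.

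For part (2), the reduction is immediate: since $g \in H^s(\mathbb{R}^2)$ with $s > 1/2$ is compactly supported, Lemma \ref{fff987f} gives $\nu_2(\mathcal{R}_{\emph{\textbf{p}}}g) \geq \nu_2(g) \geq s > 1/2$. By the definition of Sobolev smoothness as a supremum, there exists some $s'$ with $1/2 < s' \leq \nu_2(\mathcal{R}_{\emph{\textbf{p}}}g)$ such that $\mathcal{R}_{\emph{\textbf{p}}}g \in H^{s'}(\mathbb{R})$. Applying part (1) in dimension $d=1$ (where the threshold is $d/2 = 1/2$, and indeed $s' > 1/2$) yields that $\mathcal{R}_{\emph{\textbf{p}}}g$ is continuous. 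One should also note that $\mathcal{R}_{\emph{\textbf{p}}}g$ is a genuine $L^2(\mathbb{R})$ function in the first place (so that talking about its Sobolev smoothness makes sense), which follows from Lemma \ref{pouy} since $g$ is compactly supported.

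The main obstacle, such as it is, lies in a subtle point about the supremum in $\nu_2$: if $\nu_2(\mathcal{R}_{\emph{\textbf{p}}}g)$ equals exactly $1/2$ we would be stuck, but Lemma \ref{fff987f} saves us because $\nu_2(g) \geq s > 1/2$ is a strict inequality coming from the hypothesis, so the transferred smoothness is bounded below by something strictly above $1/2$, leaving room to pick an admissible $s'$. A secondary technical care point is the identification of $f$ with its continuous representative --- strictly speaking Sobolev functions are equivalence classes, so the conclusion ``$f$ is continuous'' means ``$f$ has a continuous representative,'' which is the standard convention and should be stated as such. Beyond these, the argument is routine harmonic analysis.
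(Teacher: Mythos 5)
Your proposal is correct; part (2) coincides with the paper's own reduction (Lemma \ref{fff987f} transfers the smoothness, then part (1) with $d=1$ applies), and you in fact handle the one delicate point there more explicitly than the paper does, namely that $\nu_{2}(\mathcal{R}_{\emph{\textbf{p}}}g)$ is a supremum, so one must pick a concrete exponent $s'$ with $1/2<s'$ and $\mathcal{R}_{\emph{\textbf{p}}}g\in H^{s'}(\mathbb{R})$, which is possible precisely because the transferred lower bound $\nu_{2}(g)\geq s>1/2$ is strict. For part (1), however, you take a genuinely different route from the paper: you prove the classical Sobolev embedding by showing $\widehat{f}\in L^{1}(\mathbb{R}^{d})$ via Cauchy--Schwarz against $(1+\|\xi\|_{2}^{2})^{-\varsigma}$ (integrable exactly when $2\varsigma>d$) and then invoking Fourier inversion plus dominated convergence, concluding that $f$ has a continuous representative. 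The paper instead estimates the difference $|f(\emph{\textbf{x}}+\Delta\emph{\textbf{x}})-f(\emph{\textbf{x}})|$ directly from the inversion formula, splitting into low frequencies $\|\xi\|_{2}\leq1$ (bounded linearly in $\|\Delta\emph{\textbf{x}}\|_{2}$) and high frequencies (bounded by $\|\Delta\emph{\textbf{x}}\|_{2}^{\mu}$ after choosing $\mu\in(0,1)$ with $\varsigma-\mu>d/2$), which yields the quantitative modulus $|f(\emph{\textbf{x}}+\Delta\emph{\textbf{x}})-f(\emph{\textbf{x}})|\leq\Theta_{f,\mu}\max\{\|\Delta\emph{\textbf{x}}\|_{2},\|\Delta\emph{\textbf{x}}\|_{2}^{\mu}\}$, i.e.\ H\"older-type (indeed uniform) continuity. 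So the paper's argument buys a slightly stronger, explicit continuity estimate, while yours is the shorter, more standard embedding argument that delivers exactly what the proposition asserts; your remark that ``continuous'' means ``agrees a.e.\ with a continuous function'' is the correct reading in both cases.
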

\begin{proof}
For any $\emph{\textbf{x}}$ and $ \Delta \emph{\textbf{x}}\in \mathbb{R}^{d}$, we estimate
\begin{align}\label{ff1} \begin{array}{lllll} \displaystyle
|f(\emph{\textbf{x}}+\Delta \emph{\textbf{x}})-f(\emph{\textbf{x}})|\\
=\displaystyle\Big|\frac{1}{(2\pi)^{d/2}}\int_{\mathbb{R}^{d}}\widehat{f}(\xi)(e^{\texttt{i}(\emph{\textbf{x}}+\Delta \emph{\textbf{x}})\cdot\xi}-e^{\texttt{i}\emph{\textbf{x}}\cdot\xi})d\xi\Big|\\
\leq\displaystyle\frac{2}{(2\pi)^{d/2}}\int_{\|\xi\|_{2}\leq1}|\widehat{f}(\xi)||\sin(\xi\cdot\Delta \emph{\textbf{x}}/2)| d\xi+\frac{2}{(2\pi)^{d/2}}\displaystyle\int_{\|\xi\|_{2}>1}|\widehat{f}(\xi)||\sin(\xi\cdot\Delta \emph{\textbf{x}}/2)| d\xi\\
=I_{1}(\Delta \emph{\textbf{x}})+I_{2}(\Delta \emph{\textbf{x}}),
\end{array}\end{align}
where $(e^{\texttt{i}(\emph{\textbf{x}}+\Delta \emph{\textbf{x}})\cdot\xi}-e^{\texttt{i}\emph{\textbf{x}}\cdot\xi})=2\texttt{i}e^{\texttt{i}\emph{\textbf{x}}\cdot\xi}e^{\texttt{i}\Delta \emph{\textbf{x}}\cdot\xi/2}\sin(\Delta \emph{\textbf{x}}\cdot\xi/2)$
is used in the inequality.
For  $\|\xi\|_{2}\leq1$,  it follows from   $|\sin(\xi\cdot\Delta \emph{\textbf{x}}/2)|\leq |\xi\cdot\Delta \emph{\textbf{x}}/2|\leq \frac{1}{2}\|\xi\|_{2}\|\Delta \emph{\textbf{x}}\|_{2}\leq \frac{1}{2}\|\Delta \emph{\textbf{x}}\|_{2}$ that
\begin{align}\label{ff2} \begin{array}{lllll}    I_{1}(\Delta \emph{\textbf{x}})&\displaystyle\leq \|\Delta \emph{\textbf{x}}\|_{2}\frac{1}{(2\pi)^{d/2}}\int_{\|\xi\|_{2}\leq1}|\widehat{f}(\xi)| d\xi
\\
&\displaystyle\leq\|\Delta \emph{\textbf{x}}\|_{2}\|f\|_{H^{\varsigma}(\mathbb{R}^d)}.\end{array}\end{align}
On the other hand, we  choose $\mu\in (0, 1)$ such that $\varsigma-\mu>d/2$. Then
\begin{align}\label{ff3} \begin{array}{lllll}    I_{2}(\Delta \emph{\textbf{x}})&\displaystyle\leq \frac{1}{(2\pi)^{d/2}}(\frac{\|\Delta \emph{\textbf{x}}\|_{2}}{2})^{\mu}\int_{\|\xi\|_{2}\geq 1}|\widehat{f}(\xi)|\|\xi\|^{\mu}_{2} d\xi\\
&\displaystyle\leq \frac{1}{(2\pi)^{d/2}} (\frac{\|\Delta \emph{\textbf{x}}\|_{2}}{2})^{\mu}\int_{\|\xi\|_{2}\geq 1}|\widehat{f}(\xi)|(1+\|\xi\|^{2}_{2})^{\varsigma/2}
(1+\|\xi\|^{2}_{2})^{-\varsigma/2}\|\xi\|^{\mu}_{2} d\xi\\
&\displaystyle\leq  (\frac{\|\Delta \emph{\textbf{x}}\|_{2}}{2})^{\mu}\|f\|_{H^{\varsigma}(\mathbb{R}^d)}
\int_{\|\xi\|_{2}\geq 1}(1+\|\xi\|^{2}_{2})^{-(\varsigma-\mu)}d\xi\end{array},\end{align}
where the first inequality is derived from
$$|\sin(\xi\cdot\Delta \emph{\textbf{x}}/2)|\leq |\sin(\xi\cdot\Delta \emph{\textbf{x}}/2)|^{\mu}\leq |\xi\cdot\Delta \emph{\textbf{x}}/2|^{\mu}\leq (\frac{1}{2}\|\xi\|_{2}\|\Delta \emph{\textbf{x}}\|_{2})^{\mu},$$
and  the last inequality is   from the Cauchy-Schwarz inequality.
Combining \eqref{ff1}, \eqref{ff2} and \eqref{ff3} we have
$$|f(\emph{\textbf{x}}+\Delta \emph{\textbf{x}})-f(\emph{\textbf{x}})|\leq\Theta_{f,\mu}\max\{\|\Delta \emph{\textbf{x}}\|_{2}, \|\Delta \emph{\textbf{x}}\|^{\mu}_{2}\},$$
where $$\Theta_{f,\mu}=\|f\|_{H^{\varsigma}(\mathbb{R}^d)}+\frac{1}{2^{\mu}}\|f\|_{H^{\varsigma}(\mathbb{R}^d)}
\int_{\|\xi\|_{2}\geq 1}(1+\|\xi\|^{2}_{2})^{-(\varsigma-\mu)}d\xi<\infty.$$ This
proves the first part of the proposition.

For any compactly supported  $g\in H^{s}(\mathbb{R}^{2})$
with $s>1/2$, by Lemma \ref{fff987f} the Sobolev smoothness $\nu_{2}(\mathcal{R}_{\emph{\textbf{p}}}g)\geq \nu_{2}(g)>1/2$.
By the first part of the present  proposition, $\mathcal{R}_{\emph{\textbf{p}}}g$ is continuous. The proof is concluded.
\end{proof}

\begin{rem}\label{continuous123}
For $f\in H^{\varsigma}(\mathbb{R}^d)$ with $\varsigma>d/2$, by Proposition \ref{lianxuxing} (1)
we have $f\in C(\mathbb{R}^d)$. But   it does not necessarily implies that $f\in C^{1}(\mathbb{R}^d)$.
For example, define $f(x_{1}, x_{2})=[\chi_{(0,1]}\star \chi_{(0,1]}](x_{1})[\chi_{(0,1]}\star \chi_{(0,1]}](x_{2})$,
where $\star $ is the convolution and $\chi_{(0,1]}$ is the characteristic function on  the interval   $(0,1].$
By direct calculation we have
\begin{align}\label{continuousert}[\chi_{(0,1]}\star \chi_{(0,1]}](x_{j})=\left\{\begin{array}{cccccccccc}
x_{j},&0<x_{j}\leq1,\\
2-x_{j}, &1<x_{j}\leq2,\\
0, &\hbox{else}.
\end{array}\right.\end{align}
On the other hand, one can check that
\begin{align}\label{KKKKGT}\widehat{\chi_{(0,1]}}(\xi_{j})=e^{-\texttt{i} \xi_{j}/2}\frac{\sin\xi_{j}/2}{\xi_{j}/2}.\end{align} Therefore, $$\widehat{f}(\xi_{1}, \xi_{2})=
e^{-\texttt{i} \xi_{1}}[\frac{\sin\xi_{1}/2}{\xi_{1}/2}]^2e^{-\texttt{i} \xi_{2}}[\frac{\sin\xi_{2}/2}{\xi_{2}/2}]^{2}.$$
From this, one  can check that $f\in H^{\varsigma}(\mathbb{R}^2)$ for any $\varsigma<3/2$. But it is clear from \eqref{continuousert}  that   $f\notin C^{1}(\mathbb{R}^{2})$.
\end{rem}

\begin{rem}\label{kenenglianxu}
The purpose  here is to state  that there exist   functions which are discontinuous but their
Radon transforms are  continuous.
For example, define $f(x_{1}, x_{2})=\chi_{(0,1]}(x_{1})\chi_{(0,1]}(x_{2})$. It is clear that $f$ is discontinuous.
It follows from \eqref{KKKKGT} that $\widehat{f}(\xi_{1}, \xi_{2})=e^{-\texttt{i} \xi_{1}/2}\frac{\sin\xi_{1}/2}{\xi_{1}/2}
e^{-\texttt{i} \xi_{2}/2}\frac{\sin\xi_{2}/2}{\xi_{2}/2}$.
Now for any $\emph{\textbf{p}}=(\cos\theta, \sin\theta)$ such that $\cos\theta \sin\theta$
$\neq0$, we have
$$\widehat{\mathcal{R}_{\emph{\textbf{p}}}f}(\xi)=\widehat{f}(\xi\cos\theta, \xi\sin\theta)=
e^{-\texttt{i} \frac{\xi\cos\theta}{2}}\frac{\sin\frac{\xi\cos\theta}{2}}{\frac{\xi\cos\theta}{2}}
e^{-\texttt{i} \frac{\xi\sin\theta}{2}}\frac{\sin\frac{\xi\sin\theta}{2}}{\frac{\xi\sin\theta}{2}}.$$
For   $|\xi|>1$, $|\widehat{\mathcal{R}_{\emph{\textbf{p}}}f}(\xi)|\leq\frac{|\frac{2}{\cos \theta}||\frac{2}{\sin\theta}|}{\xi^2}$. From this and the continuity of $\widehat{\mathcal{R}_{\emph{\textbf{p}}}f}$, one can prove  that the Sobolev smoothness  $\nu_{2}(\mathcal{R}_{\emph{\textbf{p}}}f)>1/2.$
By   Proposition \ref{lianxuxing} (1), $\mathcal{R}_{\emph{\textbf{p}}}f$ is continuous.
\end{rem}


\section{A necessary and sufficient condition for the SA Radon transform-based  determination }
The following establishes a necessary and sufficient condition on the pair $(\varphi, \emph{\textbf{p}})$
 such that any  compactly supported function  $f\in V(\varphi, \mathbb{Z}^{2})$ can be determined by its
SA Radon transform $\mathcal{R}_{\emph{\textbf{p}}}f$. Although  such  a determination     depends on $\mathcal{R}_{\emph{\textbf{p}}}f$ and does not  use    its samples directly,
it will be helpful for answering the SACT  sampling problem  \eqref{SPP}.
As previously,  the vectors in $\mathbb{R}^{d}$  are considered as column vectors,
while  the direction vector $\emph{\textbf{p}}$ is a row vector.
\subsection{Determination result}
The following is the main result of the present  section.
\begin{theo}\label{main1}
Suppose that  $\varphi\in  L^{2}(\mathbb{R}^{2})$   such that    $\hbox{supp}(\varphi)\subseteq[N_{1}, M_{1}]\times
[N_{2}, M_{2}]$. Moreover, $\{\varphi(\cdot-\emph{\textbf{k}}): \emph{\textbf{k}}\in \mathbb{Z}^{2}\}$ is linearly independent
in $L^2(\mathbb{R}^2)$.
Then  any $f\in V(\varphi, \mathbb{Z}^{2})$ such that $\hbox{supp}(f)\subseteq [a_{1}, b_{1}]\times [a_{2}, b_{2}]$ can be determined uniquely  by its SA Radon transform   $\mathcal{R}_{\emph{\textbf{p}}}f$ if and only if
$\{\mathcal{R}_{\emph{\textbf{p}}}\varphi(\cdot-\emph{\textbf{p}}\emph{\textbf{k}}): \emph{\textbf{k}}\in E\}$
is linearly independent in $L^{2}(\mathbb{R})$, where
$ E=\big\{\big[\lceil a_{1}-M_{1}\rceil, \lfloor b_{1}-N_{1}\rfloor\big]\times
\big[\lceil a_{2}-M_{2}\rceil, \lfloor b_{2}-N_{2}\rfloor\big]\big\}\cap \mathbb{Z}^{2}.$
\end{theo}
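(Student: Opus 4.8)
The plan is to reduce the claim to a statement about linear independence of a finite collection of functions, using the compact-support structure set up in \eqref{BVZXC1234}. First I would observe that because $\hbox{supp}(\varphi)\subseteq[N_1,M_1]\times[N_2,M_2]$ and $\hbox{supp}(f)\subseteq[a_1,b_1]\times[a_2,b_2]$, any $f\in V(\varphi,\mathbb{Z}^2)$ with that support must have the form $f=\sum_{\emph{\textbf{k}}\in E}c_{\emph{\textbf{k}}}\varphi(\cdot-\emph{\textbf{k}})$: the translates $\varphi(\cdot-\emph{\textbf{k}})$ with $\emph{\textbf{k}}\notin E$ are supported outside $[a_1,b_1]\times[a_2,b_2]$ on a set of positive measure, so by linear independence of the full shift system their coefficients must vanish. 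Thus the map $f\mapsto (c_{\emph{\textbf{k}}})_{\emph{\textbf{k}}\in E}$ is a bijection between such $f$ and $\mathbb{C}^{\#E}$.

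Next I would apply the Radon transform. Using linearity of $\mathcal{R}_{\emph{\textbf{p}}}$ together with the covariance of the Radon transform under translation — namely $\mathcal{R}_{\emph{\textbf{p}}}(\varphi(\cdot-\emph{\textbf{k}}))(t)=\mathcal{R}_{\emph{\textbf{p}}}\varphi(t-\emph{\textbf{p}}\emph{\textbf{k}})$, which follows from the definition \eqref{gbhf} by a change of variables (the line through $t\emph{\textbf{p}}+s\emph{\textbf{p}}^{\perp}$ shifted by $\emph{\textbf{k}}$ passes through $(t+\emph{\textbf{p}}\emph{\textbf{k}})\emph{\textbf{p}}+s'\emph{\textbf{p}}^{\perp}$), I get
\begin{align}\notag
\mathcal{R}_{\emph{\textbf{p}}}f=\sum_{\emph{\textbf{k}}\in E}c_{\emph{\textbf{k}}}\,\mathcal{R}_{\emph{\textbf{p}}}\varphi(\cdot-\emph{\textbf{p}}\emph{\textbf{k}}).
\end{align}
By Lemma \ref{pouy} each summand lies in $L^2(\mathbb{R})$, so $\mathcal{R}_{\emph{\textbf{p}}}f$ is a finite linear combination in $L^2(\mathbb{R})$ of the fixed family $\{\mathcal{R}_{\emph{\textbf{p}}}\varphi(\cdot-\emph{\textbf{p}}\emph{\textbf{k}}):\emph{\textbf{k}}\in E\}$ with the \emph{same} coefficient vector as $f$.

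The theorem then follows from a standard linear-algebra dichotomy. For the "if" direction: suppose $f_1,f_2\in V(\varphi,\mathbb{Z}^2)$ both have support in $[a_1,b_1]\times[a_2,b_2]$ and $\mathcal{R}_{\emph{\textbf{p}}}f_1=\mathcal{R}_{\emph{\textbf{p}}}f_2$. Writing $f_i=\sum_{\emph{\textbf{k}}\in E}c^{(i)}_{\emph{\textbf{k}}}\varphi(\cdot-\emph{\textbf{k}})$, linearity gives $\sum_{\emph{\textbf{k}}\in E}(c^{(1)}_{\emph{\textbf{k}}}-c^{(2)}_{\emph{\textbf{k}}})\mathcal{R}_{\emph{\textbf{p}}}\varphi(\cdot-\emph{\textbf{p}}\emph{\textbf{k}})=0$ in $L^2(\mathbb{R})$; linear independence of that family forces $c^{(1)}_{\emph{\textbf{k}}}=c^{(2)}_{\emph{\textbf{k}}}$ for all $\emph{\textbf{k}}\in E$, hence $f_1=f_2$. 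For the "only if" direction I argue by contraposition: if $\{\mathcal{R}_{\emph{\textbf{p}}}\varphi(\cdot-\emph{\textbf{p}}\emph{\textbf{k}}):\emph{\textbf{k}}\in E\}$ is linearly dependent, pick a nontrivial null combination with coefficients $(d_{\emph{\textbf{k}}})_{\emph{\textbf{k}}\in E}$ and set $g=\sum_{\emph{\textbf{k}}\in E}d_{\emph{\textbf{k}}}\varphi(\cdot-\emph{\textbf{k}})$; then $g\ne0$ by linear independence of the shift system, $\hbox{supp}(g)\subseteq[a_1,b_1]\times[a_2,b_2]$ since each $\emph{\textbf{k}}\in E$ keeps $\varphi(\cdot-\emph{\textbf{k}})$ inside that rectangle (this is exactly how $E$ was defined), and $\mathcal{R}_{\emph{\textbf{p}}}g=0=\mathcal{R}_{\emph{\textbf{p}}}\cdot 0$, so $g$ and $0$ are two distinct functions with the same SA Radon transform, defeating determination.

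The only genuinely delicate point is the support/translation bookkeeping: one must verify carefully that $\emph{\textbf{k}}\in E$ is equivalent to $\hbox{supp}(\varphi(\cdot-\emph{\textbf{k}}))\cap\big([a_1,b_1]\times[a_2,b_2]\big)$ having positive measure (so that the "forward" inclusion — every admissible $f$ is supported on $E$'s translates — and the "backward" inclusion — every combination over $E$ stays in the rectangle — both hold), and that the translation covariance of $\mathcal{R}_{\emph{\textbf{p}}}$ produces precisely the shift $\emph{\textbf{p}}\emph{\textbf{k}}$ appearing in the statement. Both are routine: the support condition is just the interval arithmetic encoded in the definition of $E$ via the ceiling/floor functions, and the covariance is a one-line change of variables in \eqref{gbhf}. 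Everything else is the elementary finite-dimensional equivalence "a spanning/coefficient map is injective iff the image vectors are linearly independent," applied in $L^2(\mathbb{R})$.
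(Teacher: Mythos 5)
Your overall route is the same as the paper's: reduce $f$ to the finite expansion $f=\sum_{\textbf{k}\in E}c_{\textbf{k}}\varphi(\cdot-\textbf{k})$, use the covariance identity $\mathcal{R}_{\textbf{p}}(\varphi(\cdot-\textbf{k}))=\mathcal{R}_{\textbf{p}}\varphi(\cdot-\textbf{p}\textbf{k})$ (you obtain it by a change of variables in \eqref{gbhf}, the paper via the Fourier slice relation \eqref{Radonform}; both are fine), invoke Lemma \ref{pouy} for membership in $L^{2}(\mathbb{R})$, and then run the finite-dimensional dichotomy: linear independence of $\{\mathcal{R}_{\textbf{p}}\varphi(\cdot-\textbf{p}\textbf{k}):\textbf{k}\in E\}$ gives injectivity of the coefficient map, and dependence yields a nonzero $g=\sum_{\textbf{k}\in E}d_{\textbf{k}}\varphi(\cdot-\textbf{k})$ with $\mathcal{R}_{\textbf{p}}g\equiv 0$, which is exactly the paper's $\tilde f$.

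There is, however, one concrete error in your converse: the claim that $\hbox{supp}(g)\subseteq[a_{1},b_{1}]\times[a_{2},b_{2}]$ because ``each $\textbf{k}\in E$ keeps $\varphi(\cdot-\textbf{k})$ inside that rectangle (this is exactly how $E$ was defined)'' misreads the definition of $E$. The inequalities $\lceil a_{i}-M_{i}\rceil\le k_{i}\le\lfloor b_{i}-N_{i}\rfloor$ say that the support box $[N_{i}+k_{i},M_{i}+k_{i}]$ \emph{meets} $[a_{i},b_{i}]$; containment would instead require $a_{i}-N_{i}\le k_{i}\le b_{i}-M_{i}$, a strictly smaller index set in general. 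So a combination over $E$ typically spills outside the rectangle, and the ``backward inclusion'' you describe in your final paragraph is false. The paper's own converse avoids this by not asserting any support property of $\tilde f$ at all: it only uses $\tilde f\not\equiv 0$ (linear independence of the shifts) and $\mathcal{R}_{\textbf{p}}\tilde f\equiv 0$ to conclude indistinguishability from $0$. If one reads the theorem strictly, so that the competing function must itself satisfy the support hypothesis, that point is left unaddressed in the paper too; but your attempted justification of it is wrong as stated and should be removed or repaired. A smaller remark of the same kind applies to your forward reduction: concluding that $c_{\textbf{k}}=0$ for $\textbf{k}\notin E$ ``by linear independence'' is quicker than it looks, since translates indexed by $E$ also overlap the complement of the rectangle; the paper simply posits this finite expansion as \eqref{BVZXC1234}. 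Apart from this support bookkeeping, your argument coincides with the paper's proof.
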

\begin{proof}
We first prove that  $\mathcal{R}_{\emph{\textbf{p}}}(\varphi(\cdot-\emph{\textbf{k}}))=
\mathcal{R}_{\emph{\textbf{p}}}\varphi(\cdot-\emph{\textbf{p}}\emph{\textbf{k}})$
for any $\emph{\textbf{k}}\in \mathbb{Z}^2.$
Actually,   the Fourier transform of
 $\varphi(\cdot-\emph{\textbf{k}})$ at $\emph{\textbf{x}}\in \mathbb{R}^{2}$
 is $e^{-\texttt{i}\emph{\textbf{k}}\cdot\emph{\textbf{x}}}\widehat{\varphi}(\emph{\textbf{x}})$.
 Then by the Radon transform representation  \eqref{Radonform} in the  Fourier domain,  the Fourier transform  of the Radon transform  $\mathcal{R}_{\emph{\textbf{p}}}(\varphi(\cdot-\emph{\textbf{k}}))$ at $\xi\in \mathbb{R}$
is $e^{-\texttt{i}\emph{\textbf{k}}\cdot \emph{\textbf{p}}^{T}\xi}\widehat{\varphi}(\emph{\textbf{p}}^{T}\xi)$. Clearly, \begin{align}\label{40567}e^{-\texttt{i}\emph{\textbf{k}}\cdot \emph{\textbf{p}}^{T}\xi}\widehat{\varphi}(\emph{\textbf{p}}^{T}\xi)=e^{-\texttt{i}\emph{\textbf{p}}\emph{\textbf{k}}\xi}\widehat{\mathcal{R}_{\emph{\textbf{p}}}\varphi}(\xi).
\end{align}
Stated another way,
\begin{align}\label{890756}\mathcal{R}_{\emph{\textbf{p}}}(\varphi(\cdot-\emph{\textbf{k}}))=
\mathcal{R}_{\emph{\textbf{p}}}\varphi(\cdot-\emph{\textbf{p}}\textbf{\emph{\textbf{k}}}).\end{align}
Since $\varphi\in L^2(\mathbb{R}^{2})$  is compactly supported, then it follows from
 Lemma \ref{pouy} that $\mathcal{R}_{\emph{\textbf{p}}}(\varphi)\in L^2(\mathbb{R})$. Consequently,
 $\mathcal{R}_{\emph{\textbf{p}}}\varphi(\cdot-\emph{\textbf{p}}\textbf{\emph{\textbf{k}}})\in L^{2}(\mathbb{R})$
 for any $\emph{\textbf{k}}\in E$.

For convenient narration, denote $E$ by $\{\emph{\textbf{k}}_{1}, \ldots, \emph{\textbf{k}}_{\#E}\}$. It follows from
$\{\varphi(\cdot-\emph{\textbf{k}}): \emph{\textbf{k}}\in \mathbb{Z}^{2}\}$ being linearly independent,
$\hbox{supp}(\varphi)\subseteq[N_{1}, M_{1}]\times
[N_{2}, M_{2}]$ and   $\hbox{supp}(f)\subseteq [a_{1}, b_{1}]\times [a_{2}, b_{2}]$
that there exists uniquely a finite  sequence  $\{c_{\emph{\textbf{k}}_{1}}, \ldots,
c_{\emph{\textbf{k}}_{\#E}}\}\subseteq \mathbb{C}$ such that
\begin{align}\label{XK} f=\sum^{\#E}_{l=1}c_{\emph{\textbf{k}}_{l}}\varphi(\cdot-\emph{\textbf{k}}_{l}).\end{align}
Now by \eqref{XK} and  \eqref{890756}, we have
\begin{align}\label{YK} \mathcal{R}_{\emph{\textbf{p}}}f=\sum^{\#E}_{l=1}c_{\emph{\textbf{k}}_{l}}\mathcal{R}_{\emph{\textbf{p}}}\varphi(\cdot-\emph{\textbf{p}}\textbf{\emph{\textbf{k}}}_{l}).\end{align}

$(\Longleftarrow)$:
Since  $\{\mathcal{R}_{\emph{\textbf{p}}}\varphi(\cdot-\emph{\textbf{p}}\emph{\textbf{k}}_{l}): l=1, \ldots, \#E\}$
is linearly independent in $L^{2}(\mathbb{R})$, then   $\{c_{\emph{\textbf{k}}_{l}}: l=1, \ldots, \#E\}$ can be determined
uniquely  by $\mathcal{R}_{\emph{\textbf{p}}}f$.
%
%
Note that $\{\varphi(\cdot-\emph{\textbf{k}})\}_{\emph{\textbf{k}}\in \mathbb{Z}^{2}}$ is linearly independent.
Then  with the sequence $\{c_{\emph{\textbf{k}}_{l}}: l=1, \ldots,  \#E\}$ at hand,  $f=\sum^{\#E}_{l=1}c_{\emph{\textbf{k}}_{l}}\varphi(\cdot-\emph{\textbf{k}}_{l})$ can be determined uniquely.
%

$(\Longrightarrow$):
If $\{\mathcal{R}_{\emph{\textbf{p}}}\varphi(\cdot-\emph{\textbf{p}}\textbf{\emph{\textbf{k}}}_{l}): l=1,
\ldots, \#E\}$
is linearly dependent, then
 there exists a nonzero sequence  $\{\widehat{c}_{\emph{\textbf{k}}_{l}}: l=1, \ldots, \#E\}$
 such that $\|\sum^{\#E}_{l=1}\widehat{c}_{\emph{\textbf{k}}_{l}}\mathcal{R}_{\emph{\textbf{p}}}\varphi(\cdot-\emph{\textbf{p}}\emph{\textbf{k}}_{l})\|_{L^2(\mathbb{R})}=0$.
 Recall that $\{\varphi(\cdot-\emph{\textbf{k}}): \emph{\textbf{k}}\in \mathbb{Z}^{2}\}$ is linearly independent.
 Then $\tilde{f}:=\sum^{\#E}_{l=1}\widehat{c}_{\emph{\textbf{k}}_{l}}\varphi(\cdot-\emph{\textbf{k}}_{l})\not\equiv0$
 but $\mathcal{R}_{\emph{\textbf{p}}}\tilde{f}\equiv0$.
 Now  $\tilde{f}$ is not distinguishable from $ g\equiv0\in V(\varphi, \mathbb{Z}^{2})$
since their   Radon transforms (w.r.t the direction vector $\emph{\textbf{p}}$) are both zero. This leads to a contradiction.
\end{proof}

\begin{rem}(1)
The sampling problem  is not considered  in Theorem \ref{main1}. Therefore $\mathcal{R}_{\emph{\textbf{p}}}\varphi$
is not required to be continuous therein. (2)
If the set  $\{\emph{\textbf{p}}\emph{\textbf{k}}: \emph{\textbf{k}}\in E\}$ is not  contained  in $\mathbb{Z}$, then it follows from \eqref{YK} that $\mathcal{R}_{\emph{\textbf{p}}}f$   sits in the quasi-SIS (QSIS) generated by $\mathcal{R}_{\emph{\textbf{p}}}\varphi$.
As addressed  in section \ref{Rieszbound}, the recovery   problem in QSIS is absolutely not the  trivial
generalization of that in SIS.
\end{rem}

The following subsection states that the SA Radon-based determination problem  in Theorem \ref{main1}
is absolutely  not trivial.

%
%

\subsection{A nontrivial problem: what  pair   $(\varphi, \emph{\textbf{p}})$ ensures    the system $\{\mathcal{R}_{\emph{\textbf{p}}}\varphi(\cdot-\emph{\textbf{p}}\emph{\textbf{k}}): \emph{\textbf{k}}\in E\}$ being  linearly independent}\label{bupingfan}
Note that  in Theorem \ref{main1}   the system $\{\mathcal{R}_{\emph{\textbf{p}}}\varphi(\cdot-\emph{\textbf{p}}\emph{\textbf{k}}): \emph{\textbf{k}}\in E\}$ is required to be linearly independent in $L^2(\mathbb{R}).$
Our purpose of  this subsection is to explain   that such a requirement  is absolutely not trivial.
The following lemma  is necessary for our discussion. It is derived from \cite[Lemma 6.7]{Wendlan}.

\begin{lem}\label{lemma1234}
Suppose that $\emph{\textbf{x}}_{k}\in \mathbb{R}^{d}, k=1, \ldots, N$ are pairwise distinct. Then the set  $\{e^{\texttt{i}\emph{\textbf{x}}_{k}\cdot\xi}\}^{N}_{k=1}$
is linearly independent on any interval $I\subseteq \mathbb{R}^{d}, $  namely,  for any   vector
$(\alpha_{1}, \ldots, \alpha_{N})\in \mathbb{C}^{N}$ if $\sum^{N}_{k=1}\alpha_{k}e^{\texttt{i}\emph{\textbf{x}}_{k}\cdot\xi}\equiv0$
then $(\alpha_{1}, \ldots, \alpha_{N})=\textbf{0}$.
\end{lem}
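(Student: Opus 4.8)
The plan is to prove Lemma \ref{lemma1234} by exploiting the fact that exponentials are (real-)analytic and that distinct frequency vectors give genuinely different exponentials. First I would reduce the ``interval'' statement to the claim that if $\sum_{k=1}^N \alpha_k e^{\texttt{i}\emph{\textbf{x}}_k\cdot\xi}\equiv 0$ on some nondegenerate box $I\subseteq\mathbb{R}^d$, then all $\alpha_k=0$. The key observation is that each $\xi\mapsto e^{\texttt{i}\emph{\textbf{x}}_k\cdot\xi}$ extends to an entire function on $\mathbb{C}^d$, so the finite linear combination $F(\xi)=\sum_{k=1}^N\alpha_k e^{\texttt{i}\emph{\textbf{x}}_k\cdot\xi}$ is entire; vanishing on a set with nonempty interior forces $F\equiv 0$ on all of $\mathbb{C}^d$ (e.g. by applying the one-variable identity theorem coordinate by coordinate, or directly by real-analyticity). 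So it suffices to treat the case $I=\mathbb{R}^d$.

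Next I would establish linear independence of $\{e^{\texttt{i}\emph{\textbf{x}}_k\cdot\xi}\}_{k=1}^N$ on $\mathbb{R}^d$ by induction on $N$. The case $N=1$ is clear since $e^{\texttt{i}\emph{\textbf{x}}_1\cdot\xi}$ never vanishes. For the inductive step, suppose $\sum_{k=1}^N\alpha_k e^{\texttt{i}\emph{\textbf{x}}_k\cdot\xi}\equiv 0$ with the $\emph{\textbf{x}}_k$ pairwise distinct. Pick a direction $\emph{\textbf{v}}\in\mathbb{R}^d$ with $\emph{\textbf{x}}_k\cdot\emph{\textbf{v}}$ pairwise distinct (possible since the finitely many hyperplanes $\{\emph{\textbf{v}}:(\emph{\textbf{x}}_j-\emph{\textbf{x}}_k)\cdot\emph{\textbf{v}}=0\}$ cannot cover $\mathbb{R}^d$), set $\lambda_k=\emph{\textbf{x}}_k\cdot\emph{\textbf{v}}$, and restrict to the line $\xi=t\emph{\textbf{v}}$, $t\in\mathbb{R}$: this gives $\sum_{k=1}^N\alpha_k e^{\texttt{i}\lambda_k t}\equiv 0$ with distinct reals $\lambda_k$. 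Applying the differential operator $\frac{1}{\texttt{i}}\frac{d}{dt}-\lambda_N$ kills the $k=N$ term and multiplies the $k$-th term by $(\lambda_k-\lambda_N)\neq 0$; by the induction hypothesis on the remaining $N-1$ distinct exponentials we get $\alpha_k(\lambda_k-\lambda_N)=0$, hence $\alpha_k=0$ for $k<N$, and then $\alpha_N e^{\texttt{i}\lambda_N t}\equiv 0$ forces $\alpha_N=0$. (Alternatively one can invoke the classical Vandermonde/Wronskian argument for $\{e^{\lambda_k t}\}$.)

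Since this is a standard fact cited from \cite[Lemma 6.7]{Wendlan}, the honest move in the paper is probably just to refer to that source rather than reproving it. If a self-contained argument is preferred, the only real subtlety — and the step I would flag as the main obstacle — is the reduction from ``vanishing on an interval $I$'' to ``vanishing on all of $\mathbb{R}^d$'': one must be careful to use that $I$ has nonempty interior (a lower-dimensional $I$ would not suffice), and to correctly invoke the identity principle for analytic functions in several variables. Everything after that reduction is the elementary Vandermonde-type computation sketched above and needs no further comment.
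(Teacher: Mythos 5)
Your argument is correct. Note, though, that the paper does not actually prove this lemma: it is quoted as ``derived from \cite[Lemma 6.7]{Wendlan}'', so there is no in-paper proof to compare against, and your self-contained argument is a legitimate alternative to that citation. Your two steps are both sound: (i) the reduction from vanishing on an interval with nonempty interior to vanishing on all of $\mathbb{R}^{d}$ via entirety of $\xi\mapsto\sum_{k}\alpha_{k}e^{\texttt{i}\emph{\textbf{x}}_{k}\cdot\xi}$ and the identity theorem applied coordinate by coordinate (and your caveat is exactly right -- a degenerate or lower-dimensional ``interval'' would make the statement false, e.g. $\emph{\textbf{x}}_{1}=(0,0)$, $\emph{\textbf{x}}_{2}=(0,1)$ on $[0,1]\times\{0\}$; in the paper's applications the interval $[\xi_{0}-\delta_{0},\xi_{0}+\delta_{0}]\subseteq\mathbb{R}$ is nondegenerate, so this is harmless); and (ii) the induction on $N$, choosing a direction $\emph{\textbf{v}}$ off the finitely many hyperplanes $(\emph{\textbf{x}}_{j}-\emph{\textbf{x}}_{k})\cdot\emph{\textbf{v}}=0$ so that the restricted frequencies $\lambda_{k}=\emph{\textbf{x}}_{k}\cdot\emph{\textbf{v}}$ are distinct reals, then annihilating one term with $\frac{1}{\texttt{i}}\frac{d}{dt}-\lambda_{N}$ (just make sure the induction hypothesis is stated for all $d$, or directly for the one-dimensional case, since after restricting to the line you invoke it for exponentials on $\mathbb{R}$). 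For comparison, Wendland's own proof of the $\mathbb{R}^{d}$ version extracts each coefficient by averaging against $e^{-\texttt{i}\emph{\textbf{x}}_{k}\cdot\xi}$ over large balls, which avoids the generic-direction reduction but does not by itself give the ``on an interval'' form; your analytic-continuation step is precisely what bridges that gap, so your route is, if anything, better adapted to the statement as used in this paper.
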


In what follows, we establish the  equivalent characterizations for  the linear independence of
$\{\mathcal{R}_{\emph{\textbf{p}}}\varphi(\cdot-\emph{\textbf{p}}\emph{\textbf{k}}): \emph{\textbf{k}}\in E\}$.

\begin{prop}\label{Remark3.2}
Let the  compactly supported  $\varphi$ and $E=\{\emph{\textbf{k}}_{1}, \ldots, \emph{\textbf{k}}_{\#E}\}\subseteq \mathbb{Z}^{2}$ be as in Theorem \ref{main1}. Then the following statements  are equivalent:

(1) The system $\{\mathcal{R}_{\emph{\textbf{p}}}\varphi(\cdot-\emph{\textbf{p}}\emph{\textbf{k}}_{j}):
j=1, \ldots, \#E\}$ is linearly independent in $L^2(\mathbb{R})$.

(2) For any vector $\textbf{0}\neq(c_{1}, \ldots, c_{\#E})^{T}\in \mathbb{C}^{\#E}$
it holds that
\begin{align}\label{98FG}
\int_{\mathbb{R}}|\sum^{\#E}_{j=1}c_je^{-\texttt{i}\emph{\textbf{p}}\emph{\textbf{k}}_{j}\xi}|^{2}|\widehat{\varphi}(\emph{\textbf{p}}^{T}\xi)|^{2}d\xi>0.
\end{align}

(3) $\widehat{\varphi}(\emph{\textbf{p}}^{T}\cdot)\not\equiv0,$ and if $\#E>1$ then  for any $j\neq n\in \{1, \ldots, \#E\}$ we have $\emph{\textbf{p}}\emph{\textbf{k}}_{j}\neq
\emph{\textbf{p}}\emph{\textbf{k}}_{n}$.
\end{prop}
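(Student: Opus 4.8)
The plan is to transfer the whole question to the Fourier side, establish $(1)\Leftrightarrow(2)$ by Plancherel, and then close the loop via $(2)\Rightarrow(3)$ and $(3)\Rightarrow(2)$.

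I would start from \eqref{40567}, which tells us that the Fourier transform of $\mathcal{R}_{\emph{\textbf{p}}}\varphi(\cdot-\emph{\textbf{p}}\emph{\textbf{k}}_{j})$ is $\xi\mapsto e^{-\texttt{i}\emph{\textbf{p}}\emph{\textbf{k}}_{j}\xi}\widehat{\varphi}(\emph{\textbf{p}}^{T}\xi)$. For any $(c_{1},\ldots,c_{\#E})^{T}\in\mathbb{C}^{\#E}$, Plancherel's identity then gives
\begin{align*}
\Big\|\sum_{j=1}^{\#E}c_{j}\,\mathcal{R}_{\emph{\textbf{p}}}\varphi(\cdot-\emph{\textbf{p}}\emph{\textbf{k}}_{j})\Big\|_{L^{2}(\mathbb{R})}^{2}=\frac{1}{2\pi}\int_{\mathbb{R}}\Big|\sum_{j=1}^{\#E}c_{j}e^{-\texttt{i}\emph{\textbf{p}}\emph{\textbf{k}}_{j}\xi}\Big|^{2}|\widehat{\varphi}(\emph{\textbf{p}}^{T}\xi)|^{2}\,d\xi .
\end{align*}
Linear independence of the system in (1) means exactly that the left-hand side is strictly positive for every nonzero coefficient vector, and by this identity that is precisely \eqref{98FG}; so $(1)\Leftrightarrow(2)$ is immediate.

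Next I would prove $(2)\Rightarrow(3)$ by contraposition. If $\widehat{\varphi}(\emph{\textbf{p}}^{T}\cdot)\equiv0$, then the integral in \eqref{98FG} vanishes for every $c$, so (2) fails. If instead $\#E>1$ and $\emph{\textbf{p}}\emph{\textbf{k}}_{j}=\emph{\textbf{p}}\emph{\textbf{k}}_{n}$ for some $j\neq n$, then choosing the nonzero vector with $c_{j}=1$, $c_{n}=-1$ and all other entries zero makes $\sum_{l}c_{l}e^{-\texttt{i}\emph{\textbf{p}}\emph{\textbf{k}}_{l}\xi}\equiv0$, so the integral in \eqref{98FG} again vanishes while the coefficient vector is nonzero; hence (2) fails. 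This gives both halves of $(2)\Rightarrow(3)$.

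The implication $(3)\Rightarrow(2)$ is the crux, and it is where Lemma \ref{lemma1234} is used. Fix a nonzero $(c_{1},\ldots,c_{\#E})^{T}$. By $(3)$ the numbers $\emph{\textbf{p}}\emph{\textbf{k}}_{1},\ldots,\emph{\textbf{p}}\emph{\textbf{k}}_{\#E}$ are pairwise distinct, so Lemma \ref{lemma1234} shows the exponential sum $\psi(\xi):=\sum_{j}c_{j}e^{-\texttt{i}\emph{\textbf{p}}\emph{\textbf{k}}_{j}\xi}$ is not the zero function; being real-analytic on $\mathbb{R}$, $\psi$ then vanishes only on a set of Lebesgue measure zero. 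Moreover, since $\varphi$ is compactly supported, the Paley--Wiener theorem makes $\widehat{\varphi}$ (the restriction of) an entire function, so $\xi\mapsto\widehat{\varphi}(\emph{\textbf{p}}^{T}\xi)$ is real-analytic on $\mathbb{R}$ and, being not identically zero by $(3)$, also vanishes only on a null set. Hence the nonnegative integrand $|\psi(\xi)|^{2}|\widehat{\varphi}(\emph{\textbf{p}}^{T}\xi)|^{2}$ is strictly positive off a null set, so its integral over $\mathbb{R}$ is strictly positive and \eqref{98FG} holds. I expect this last step to be the main obstacle: one has to pass from ``the exponential sum is not identically zero'' (Lemma \ref{lemma1234}) and ``$\widehat{\varphi}(\emph{\textbf{p}}^{T}\cdot)\not\equiv0$'' to ``the product is nonzero almost everywhere,'' and that is exactly what real-analyticity, guaranteed by Paley--Wiener, provides.
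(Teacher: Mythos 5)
Your proposal is correct and follows essentially the same route as the paper: Plancherel gives $(1)\Leftrightarrow(2)$, the same two counterexample coefficient vectors give $(2)\Rightarrow(3)$, and Lemma \ref{lemma1234} drives $(3)\Rightarrow(2)$. The only (harmless) difference is the final positivity step: you argue globally that both factors are real-analytic (via Paley--Wiener) and hence nonzero almost everywhere, while the paper localizes to an interval where the continuous function $\widehat{\mathcal{R}_{\emph{\textbf{p}}}\varphi}=\widehat{\varphi}(\emph{\textbf{p}}^{T}\cdot)$ is nonvanishing and applies the linear independence of the exponentials there; both finishes are valid.
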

\begin{proof}
By \eqref{40567} we can check that the Fourier transform of $\sum^{\#E}_{j=1}c_{j}\mathcal{R}_{\emph{\textbf{p}}}\varphi(\cdot-\emph{\textbf{p}}\textbf{\emph{\textbf{k}}}_{j})$
is $\sum^{\#E}_{j=1}c_je^{-\texttt{i}\emph{\textbf{p}}\emph{\textbf{k}}_{j}\xi}\widehat{\varphi}(\emph{\textbf{p}}^{T}\xi)$. From this we have     $(1)\Longleftrightarrow(2)$. If  $\widehat{\varphi}(\emph{\textbf{p}}\cdot)\equiv0$ then the integral in \eqref{98FG} is zero.
On the other hand, if $\#E>1$ and  $\emph{\textbf{p}}\emph{\textbf{k}}_{i_{1}}=\emph{\textbf{p}}\emph{\textbf{k}}_{i_{2}}$
for some  $i_{1}, i_{2}\in \{1, 2, \ldots, \#E\}$
 then
the integral is zero when  choosing $0\neq c_{i_{1}}=-c_{i_{2}}$ and $c_{j}=0$ for $j\neq i_{1}, i_{2}$. Then  $(2)\Longrightarrow(3)$.
Next we prove that $(3)\Longrightarrow(2)$. Actually, since $\varphi$ is compactly supported,
it follows from  Lemma \ref{pouy} that  $\mathcal{R}_{\emph{\textbf{p}}}\varphi$ is also  compactly supported.
Then $0\not\equiv\widehat{\mathcal{R}_{\emph{\textbf{p}}}\varphi}=\widehat{\varphi}(\emph{\textbf{p}}^{T}\cdot)\in C^{\infty}(\mathbb{R})$, and consequently  there exists an interval denoted by $[\xi_{0}-\delta_{0}, \xi_{0}+\delta_{0}]$
such that for any $\xi\in [\xi_{0}-\delta_{0}, \xi_{0}+\delta_{0}]$ we have $|\widehat{\mathcal{R}_{\emph{\textbf{p}}}\varphi}(\xi)|>0.$
Additionally, it follows from Lemma \ref{lemma1234} that $\{e^{-\texttt{i}\emph{\textbf{p}}\emph{\textbf{k}}_{l}}: l=1, \ldots,
\#E\}$ is linearly independent on $[\xi_{0}-\delta_{0}, \xi_{0}+\delta_{0}]$. Then
\begin{align}\label{92348FG}
\int_{\mathbb{R}}|\sum^{\#E}_{j=1}c_je^{-\texttt{i}\emph{\textbf{p}}\emph{\textbf{k}}_{j}\xi}|^{2}|\widehat{\varphi}(\emph{\textbf{p}}^{T}\xi)|^{2}d\xi\geq
\int^{\xi_{0}+\delta_{0}}_{\xi_{0}-\delta_{0}}|\sum^{\#E}_{j=1}c_je^{-\texttt{i}\emph{\textbf{p}}\emph{\textbf{k}}_{j}\xi}|^{2}|\widehat{\varphi}(\emph{\textbf{p}}^{T}\xi)|^{2}d\xi>0.
\end{align}
Consequently, \eqref{98FG} holds. This completes the proof.
\end{proof}

The following is a counterexample such that the condition in Proposition \ref{Remark3.2} is not satisfied.
Therefore, the problem of the linear independence of $\{\mathcal{R}_{\emph{\textbf{p}}}\varphi(\cdot-\emph{\textbf{p}}\emph{\textbf{k}}): \emph{\textbf{k}}\in E\}$ is not trivial.

\begin{exam}\label{examplenew}
The generator  $\varphi$ is defined such that
\begin{align}\label{nontrivalexample}\widehat{\varphi}(\xi_1, \xi_2)=\sin(\xi_1-\xi_2)\widehat{g}(\xi_1,\xi_2), \end{align}where $0\not\equiv g\in L^2(\mathbb{R}^{2})$
is compactly supported. One can check that  $\varphi(x_1,x_2)=\frac{1}{2\texttt{i}}g(x_1+1, x_2-1)-\frac{1}{2\texttt{i}}g(x_1-1, x_2+1)$
and is compactly supported as well.  Clearly,  $\widehat{\varphi}(\emph{\textbf{p}}^{T}\cdot)\equiv0$
if choosing   $\emph{\textbf{p}}=(\frac{\sqrt{2}}{2}, \frac{\sqrt{2}}{2})$.
\end{exam}

\textbf{Analysis with the help of Paley-Wiener theorem}.
From the perspective of zero distribution, $\widehat{\varphi}$ in Example \ref{examplenew} has zeros along the line $\xi_{1}-\xi_{2}=0$ on $\mathbb{R}^{2}=\{(\xi_{1}, \xi_{2})^{T}: \xi_{1}, \xi_{2}\in \mathbb{R}\}$.
This implies that $\widehat{\varphi}$ has non-isolated zeros on $\mathbb{R}^{2}$. For better understanding this issue,
in what follows we explain it from the perspective of zero distribution of entire functions.
The classical  Paley-Wiener theorem (c.f. \cite{Stein}) states that a function  $g\in L^{2}(\mathbb{R}^{d})$
is the Fourier transform of a square integrable function with compact support if and only if
it is the boundary value on $\mathbb{R}^{d}$ of an entire function on $\mathbb{C}^{d}$ of   exponential type.
Now for the compactly supported generator $\varphi\in L^2(\mathbb{R}^d)$, by the Paley-Wiener theorem we conclude that
its Fourier transform $\widehat{\varphi}$ is the boundary value on $\mathbb{R}^d$ of an entire function on $\mathbb{C}^{d}$.
It is well-known that for $d\geq2$ an entire function on $\mathbb{C}^{d}$ may have non-isolated zeros (c.f. \cite{duofubian}).
Therefore, it is no wonder that there exists a pair $(\varphi, \emph{\textbf{p}})$ such that
$\widehat{\varphi}(\emph{\textbf{p}}^{T}\xi)=0$ for any $\xi\in \mathbb{R}$. Correspondingly,  the system
$ \{\mathcal{R}_{\emph{\textbf{p}}}\varphi(\cdot-\emph{\textbf{p}}\emph{\textbf{k}}_{j}):
j=1, \ldots, \#E\}$ in Proposition \ref{Remark3.2}  is linearly dependent.

%
%
%
%
%
%

\section{SA-Radon samples  based  reconstruction   for  compactly supported   functions in    SIS}\label{boxspline}

This section concerns on  the SACT sampling  problem  \eqref{SPP}  for   compactly supported  functions  in the SIS    generated by  a   compactly supported generator $\varphi$. The main results will be  organized  in Theorems \ref{yibanshengcyuan}, \ref{xianshibiaoshi} and
\ref{yibanshengcyuanYY}. For the  better readability,  we quickly sketch the structure of this section.
A necessary and sufficient condition on $(\varphi, \emph{\textbf{p}})$ and the sampling set $X\subseteq \mathbb{R}$
will be  established in Theorem \ref{theorem123},
such that a compactly supported function  $f\in V(\varphi, \mathbb{Z}^2)$ can be determined uniquely by its SA Radon samples at $X.$
Based on Theorem \ref{theorem123},
our two main results are organized in Theorems \ref{yibanshengcyuan}, \ref{xianshibiaoshi}, and
Theorem \ref{yibanshengcyuanYY} and Proposition   \ref{hbvcxz}.
Theorems \ref{yibanshengcyuan} and  \ref{xianshibiaoshi} hold for the nonvanishing case ($\widehat{\varphi}(\textbf{0})\neq0$)
while Theorem  \ref{yibanshengcyuanYY} and  Proposition  \ref{hbvcxz} hold for the vanishing case ($\widehat{\varphi}(\textbf{0})=0$).



\subsection{ A sufficient and necessary    condition  on the pair $(\varphi, \emph{\textbf{p}})$ and the sampling set $X$ such that the SACT sampling \eqref{SPP} can be achieved.}\label{neirong1}
As previously, any  $\emph{\textbf{x}}\in \mathbb{R}^{2}$
  is considered as a column vector while the direction vector  $\emph{\textbf{p}}$ is a row vector.

\begin{theo}\label{theorem123}
Suppose that   $\varphi\in  L^{2}(\mathbb{R}^{2})$   such that    $\hbox{supp}(\varphi)\subseteq[N_{1}, M_{1}]\times
[N_{2}, M_{2}]$ and $\{\varphi(\cdot-\emph{\textbf{k}}): \emph{\textbf{k}}\in \mathbb{Z}^{2}\}$ is linearly independent, and $\emph{\textbf{p}}=(\cos\theta, \sin\theta)$ is a direction vector
such that $\mathcal{R}_{\emph{\textbf{p}}}\varphi$ is continuous.
Moreover,
$f\in V(\varphi, \mathbb{Z}^{2})$ is an arbitrary  target function  such that
$ \hbox{supp}(f)\subseteq [a_{1}, b_{1}]\times [a_{2}, b_{2}]$.
Let   $ E=\big\{\big[\lceil a_{1}-M_{1}\rceil, \lfloor b_{1}-N_{1}\rfloor\big]\times
\big[\lceil a_{2}-M_{2}\rceil, \lfloor b_{2}-N_{2}\rfloor\big]\big\}\cap \mathbb{Z}^{2}$
and denote it by
$\{\emph{\textbf{k}}_{1}, \ldots, \emph{\textbf{k}}_{\#E}\}.$
Then $f$ can be determined uniquely by its
SA Radon (w.r.t  $\emph{\textbf{p}}$) samples at $X=\{x_{1}, \ldots, x_{\#E}\}\subseteq \mathbb{R}$ if  and only if  the $\#E\times \#E$ matrix
\begin{align}\label{qixiang1234}
A_{\varphi,\emph{\textbf{p}}, X}:=\left(\begin{array}{cccccccccc}
\mathcal{R}_{\emph{\textbf{p}}}\varphi(x_{1}-\emph{\textbf{p}}\emph{\textbf{k}}_{1})&\mathcal{R}_{\emph{\textbf{p}}}\varphi(x_{1}-\emph{\textbf{p}}\emph{\textbf{k}}_{2})&\cdots&\mathcal{R}_{\emph{\textbf{p}}}\varphi(x_{1}-
\emph{\textbf{p}}\emph{\textbf{k}}_{\#E})\\
\mathcal{R}_{\emph{\textbf{p}}}\varphi(x_{2}-\emph{\textbf{p}}\emph{\textbf{k}}_{1})&\mathcal{R}_{\emph{\textbf{p}}}\varphi(x_{2}-\emph{\textbf{p}}\emph{\textbf{k}}_{2})&\cdots&\mathcal{R}_{\emph{\textbf{p}}}\varphi(x_{2}-
\emph{\textbf{p}}\emph{\textbf{k}}_{\#E})\\
\vdots&\vdots&\ddots&\vdots\\
\mathcal{R}_{\emph{\textbf{p}}}\varphi(x_{\#E}-
\emph{\textbf{p}}\emph{\textbf{k}}_{1})&\mathcal{R}_{\emph{\textbf{p}}}\varphi(x_{\#E}-
\emph{\textbf{p}}\emph{\textbf{k}}_{2})&\cdots&\mathcal{R}_{\emph{\textbf{p}}}\varphi(x_{\#E}-\emph{\textbf{p}}\emph{\textbf{k}}_{\#E})
\end{array}\right)
\end{align}
is invertible.
\end{theo}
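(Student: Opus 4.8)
The plan is to reduce the sampling statement to a linear-algebra criterion by expanding the target function in the finite basis provided by \eqref{BVZXC1234}. First I would recall from Theorem \ref{main1} and \eqref{YK} that, writing $f=\sum_{l=1}^{\#E}c_{\emph{\textbf{k}}_{l}}\varphi(\cdot-\emph{\textbf{k}}_{l})$, the single-angle Radon transform is $\mathcal{R}_{\emph{\textbf{p}}}f=\sum_{l=1}^{\#E}c_{\emph{\textbf{k}}_{l}}\mathcal{R}_{\emph{\textbf{p}}}\varphi(\cdot-\emph{\textbf{p}}\emph{\textbf{k}}_{l})$. Since $\varphi$ is compactly supported, $\mathcal{R}_{\emph{\textbf{p}}}\varphi$ is compactly supported by Lemma \ref{pouy}, and it is continuous by hypothesis, so the pointwise evaluations $\mathcal{R}_{\emph{\textbf{p}}}f(x_{i})$ make sense for $x_{i}\in X$. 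Evaluating the displayed expression for $\mathcal{R}_{\emph{\textbf{p}}}f$ at the points $x_{1},\ldots,x_{\#E}$ gives the linear system $\bigl(\mathcal{R}_{\emph{\textbf{p}}}f(x_{1}),\ldots,\mathcal{R}_{\emph{\textbf{p}}}f(x_{\#E})\bigr)^{T}=A_{\varphi,\emph{\textbf{p}},X}\,(c_{\emph{\textbf{k}}_{1}},\ldots,c_{\emph{\textbf{k}}_{\#E}})^{T}$, with $A_{\varphi,\emph{\textbf{p}},X}$ exactly the matrix in \eqref{qixiang1234}.

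Next I would argue both implications from this identity. For the ``if'' direction: if $A_{\varphi,\emph{\textbf{p}},X}$ is invertible, then the coefficient vector $(c_{\emph{\textbf{k}}_{1}},\ldots,c_{\emph{\textbf{k}}_{\#E}})^{T}$ is uniquely recovered from the sample vector by applying $A_{\varphi,\emph{\textbf{p}},X}^{-1}$; since $\{\varphi(\cdot-\emph{\textbf{k}}):\emph{\textbf{k}}\in\mathbb{Z}^{2}\}$ is linearly independent, these coefficients determine $f$ uniquely, so $f$ is determined by its samples $\{\mathcal{R}_{\emph{\textbf{p}}}f(x_{i})\}_{i=1}^{\#E}$. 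For the ``only if'' direction: suppose $A_{\varphi,\emph{\textbf{p}},X}$ is singular, so there is a nonzero $\vec{d}=(d_{1},\ldots,d_{\#E})^{T}\in\mathbb{C}^{\#E}$ with $A_{\varphi,\emph{\textbf{p}},X}\vec{d}=\vec{0}$. Set $\tilde f:=\sum_{l=1}^{\#E}d_{l}\varphi(\cdot-\emph{\textbf{k}}_{l})$; by linear independence of the shifts, $\tilde f\not\equiv0$, yet $\mathcal{R}_{\emph{\textbf{p}}}\tilde f(x_{i})=(A_{\varphi,\emph{\textbf{p}},X}\vec{d})_{i}=0$ for all $i$. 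Hence $\tilde f\in V(\varphi,\mathbb{Z}^{2})$ has support in $[a_{1},b_{1}]\times[a_{2},b_{2}]$ (its support lies in the union of the supports of the translated generators indexed by $E$, which is contained in that rectangle by the definition of $E$) and its single-angle Radon samples at $X$ all vanish — the same samples as the zero function. Thus $\tilde f$ and $0$ are indistinguishable from their samples, contradicting unique determination.

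The only genuinely delicate point, rather than a true obstacle, is the well-definedness of the pointwise samples and the bookkeeping that $\tilde f$ indeed has support inside $[a_{1},b_{1}]\times[a_{2},b_{2}]$ so that it qualifies as an admissible target function; both follow from the support conventions in \eqref{KKK45}, \eqref{KKK46} and the definition of $E$, together with the continuity assumption on $\mathcal{R}_{\emph{\textbf{p}}}\varphi$. I would also note in passing that when the points $\emph{\textbf{p}}\emph{\textbf{k}}_{l}$ are not all distinct the matrix has repeated columns and is automatically singular, consistent with Proposition \ref{Remark3.2}; but no special treatment is needed since the matrix criterion handles this case uniformly. Everything else is the routine translation between the expansion \eqref{YK}, evaluation at the sampling points, and invertibility of the resulting square matrix.
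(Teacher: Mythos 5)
Your proposal is correct and takes essentially the same route as the paper: write $f=\sum_{l=1}^{\#E}c_{\emph{\textbf{k}}_{l}}\varphi(\cdot-\emph{\textbf{k}}_{l})$, evaluate $\mathcal{R}_{\emph{\textbf{p}}}f=\sum_{l=1}^{\#E}c_{\emph{\textbf{k}}_{l}}\mathcal{R}_{\emph{\textbf{p}}}\varphi(\cdot-\emph{\textbf{p}}\emph{\textbf{k}}_{l})$ at the points of $X$, and read both implications off the invertibility of $A_{\varphi,\emph{\textbf{p}},X}$ together with the linear independence of the integer shifts of $\varphi$. The only inaccuracy is your parenthetical claim that $\hbox{supp}(\tilde{f})\subseteq[a_{1},b_{1}]\times[a_{2},b_{2}]$: the set $E$ collects exactly those $\emph{\textbf{k}}$ for which $\hbox{supp}(\varphi(\cdot-\emph{\textbf{k}}))$ can meet that rectangle, not those for which it is contained in it, so $\tilde{f}$ need not satisfy the support constraint; however, this point is not actually needed (and the paper's own proof does not invoke it), since non-uniqueness is concluded at the level of the coefficient vector, exactly as in the paper.
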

\begin{proof}
($\Longleftarrow$) We first prove that
if $A_{\varphi,\emph{\textbf{p}}, X}$ is invertible then
$\{\mathcal{R}_{\textbf{\emph{p}}}\varphi(\cdot-\emph{\textbf{p}}\emph{\textbf{k}}_{n}): n=1, \ldots,  \#E\}$ is linearly independent
in $L^2(\mathbb{R})$. Otherwise,   there exists a nonzero vector
$(\widehat{d}_{1}, \ldots, \widehat{d}_{\#E})^{T}\in \mathbb{C}^{\#E}$ such that \begin{align}\label{gnewvbc}\|\sum^{\#E}_{n=1}\widehat{d}_{n}\mathcal{R}_{\textbf{\emph{p}}}\varphi(\cdot-\emph{\textbf{p}}\emph{\textbf{k}}_{n})\|^{2}_{L^2(\mathbb{R})}=
\int_{\mathbb{R}}|\sum^{\#E}_{n=1}\widehat{d}_{n}\mathcal{R}_{\textbf{\emph{p}}}\varphi(x-\emph{\textbf{p}}\emph{\textbf{k}}_{n})|^{2}dx=0.\end{align}
It follows from \eqref{gnewvbc} and the continuity of $\mathcal{R}_{\textbf{\emph{p}}}\varphi$ that
for any $x_{l}\in X$ we have $\sum^{\#E}_{n=1}\widehat{d}_{n}\mathcal{R}_{\emph{\textbf{p}}}\varphi(x_{l}-\emph{\textbf{p}}\emph{\textbf{k}}_{n})=0,$
which implies that the matrix   $A_{\varphi,\emph{\textbf{p}}, X}$ is singular.
This is a contradiction.
Next we    prove that
$\mathcal{R}_{\emph{\textbf{p}}}f$ can be determined by its samples at $X$ if
$A_{\varphi,\emph{\textbf{p}}, X}$ is invertible.

As in \eqref{XK} and \eqref{YK}, there exists uniquely $ (c_{\emph{\textbf{k}}_{1}},
\ldots, c_{\emph{\textbf{k}}_{\#E}})^{T}\in \mathbb{C}^{\#E}$ such that
\begin{align}\label{XKYYZ}\begin{array}{llll} \displaystyle f=\sum^{\#E}_{n=1}c_{\emph{\textbf{k}}_{n}}\varphi(\cdot-\emph{\textbf{k}}_{n})
\end{array} \end{align}
and consequently,
\begin{align}\label{Y00K}\begin{array}{llll} \displaystyle\mathcal{R}_{\emph{\textbf{p}}}f=\sum^{\#E}_{n=1}c_{\emph{\textbf{k}}_{n}}\mathcal{R}_{\emph{\textbf{p}}}\varphi(\cdot-\emph{\textbf{p}}\textbf{\emph{\textbf{k}}}_{n}).
\end{array} \end{align}
Now it follows from \eqref{Y00K} that
\begin{align} \label{ui981207}A_{\varphi,\emph{\textbf{p}}, X}(c_{\emph{\textbf{k}}_{1}},
\ldots, c_{\emph{\textbf{k}}_{\#E}})^{T}=(\mathcal{R}_{\emph{\textbf{p}}}f(x_{1}), \ldots, \mathcal{R}_{\emph{\textbf{p}}}f(x_{\#E}))^{T}.\end{align}
Since $A_{\varphi,\emph{\textbf{p}}, X}$ is invertible then
$(c_{\emph{\textbf{k}}_{1}},
\ldots, c_{\emph{\textbf{k}}_{\#E}})^{T}$
can be determined uniquely by the SA Radon samples $\mathcal{R}_{\emph{\textbf{p}}}f(x_{1}),
\ldots, \mathcal{R}_{\emph{\textbf{p}}}f(x_{\#E})$.
Since $\{\mathcal{R}_{\textbf{\emph{p}}}\varphi(\cdot-\emph{\textbf{p}}\emph{\textbf{k}}_{n}): n=1, \ldots,  \#E\}$
is   linearly independent, $\mathcal{R}_{\emph{\textbf{p}}}f$ represented  via  \eqref{Y00K} can be  determined from
the vector $(c_{\emph{\textbf{k}}_{1}},
\ldots, c_{\emph{\textbf{k}}_{\#E}})^{T}$.
Now  by Theorem \ref{main1},
 $f=\sum^{\#E}_{n=1}c_{\emph{\textbf{k}}_{n}}\varphi(\cdot-\emph{\textbf{k}}_{n})$ can be determined uniquely.

 $(\Longrightarrow)$ If $A_{\varphi, \emph{\textbf{p}}, X}$ is not invertible then
$ (c_{\emph{\textbf{k}}_{1}},
\ldots, c_{\emph{\textbf{k}}_{\#E}})^{T}$ can not be determined uniquely by
\eqref{ui981207}. Recall again that $\{\varphi(\cdot-\emph{\textbf{k}}): \emph{\textbf{k}}\in E\}$
is linearly independent, then $f$ in \eqref{XKYYZ} can not be determined uniquely.
\end{proof}

\begin{rem}\label{alternative}
For the sampling problem in the SIS $V(\varphi, \mathbb{Z}^2)$, it is required that  $\varphi$ is    continuous  (c.f.
Aldroubi and   Gr\"{o}chenig \cite{Fienup289}).
Therefore, if $\varphi$ is discontinuous then the sampling in   $V(\varphi, \mathbb{Z}^2)$
is not well-defined.
On the other hand, it follows from Remark \ref{kenenglianxu} that   even though  $\varphi$ is   discontinuous,
the Radon transform $\mathcal{R}_{\emph{\textbf{p}}}\varphi$ may be continuous.
From this perspective,  when $\varphi$ is discontinuous Theorem \ref{theorem123} may  provide  an alternative sampling-based recovery for compactly supported functions in
$V(\varphi, \mathbb{Z}^2)$.
\end{rem}

\subsection{Direction vector set and null set}
The concepts of direction vector set and null set
will be  necessary  for  SACT sampling.
\begin{defi}\label{crictial}
(1) Suppose that $\mathcal{S}\subseteq \mathbb{R}^{2}$ such that $\mathcal{S}\backslash\{\textbf{0}\}$ is not empty. Define its direction vector set as \begin{align}\label{dvdingyi}\hbox{dv}_{\mathcal{S}}=
\{(\cos\theta, \sin\theta): \hbox{all} \ \textbf{0}\neq\emph{\textbf{x}}=\|\emph{\textbf{x}}\|_{2}(\cos\theta, \sin\theta)^{T}\in \mathcal{S}\}.
\end{align}
The direction vector sets of the  empty set $\emptyset$ and $\{\textbf{0}\}$ are both  simply defined as $\emptyset$.

(2) For $\mathcal{S}\subseteq \mathbb{R}^{2}$ such that  $\mathcal{S}\backslash\{\textbf{0}\}$ is not empty,
its   null set   $\mathcal{N}_{\mathcal{S}}$
is defined as \begin{align}\label{nullset}\{\textbf{0}\neq\emph{\textbf{y}}\in \mathbb{R}^{2}: \hbox{there exists} \ \textbf{0}\neq\emph{\textbf{x}}\in \mathcal{S} \
\hbox{such that} \ \emph{\textbf{x}}^{T}\emph{\textbf{y}}=0\}.\end{align}
The null sets of $\emptyset$ and $\{\textbf{0}\}$ are both simply defined as $\emptyset$.
Correspondingly, if $\mathcal{S}\backslash\{\textbf{0}\}$ is not empty then  the direction vector set $\hbox{dv}_{\mathcal{N}_{\mathcal{S}}}$
is defined via \eqref{dvdingyi}.
\end{defi}

\begin{rem}\label{guanyucedu} (1)
For $\emph{\textbf{x}}_{0}\in \mathbb{R}^2$ and its  open  disc  \begin{align}
\label{disck}\mathring{\mathcal{D}}(\emph{\textbf{x}}_{0}, \delta):=\{\emph{\textbf{x}}\in \mathbb{R}^{2}: \|\emph{\textbf{x}}-\emph{\textbf{x}}_{0}\|_{2}<\delta\},\end{align}  if  $\textbf{0}\in \mathring{\mathcal{D}}(\emph{\textbf{x}}_{0}, \delta)$ then $\hbox{dv}_{\mathring{\mathcal{D}}(\emph{\textbf{x}}_{0}, \delta)}$ is the unit circle $\{(\cos\theta, \sin \theta): \theta\in [0, 2\pi)\}$.
(2)
Suppose that $\mathcal{S}\subseteq \mathbb{R}^{2}$ is finite
such that $\mathcal{S}\backslash\{\textbf{0}\}$ is not empty.
The  null set $\mathcal{N}_{\mathcal{S}}$ of $\mathcal{S}$ is defined via \eqref{nullset}.
  Then its  cardinality  $\#\hbox{dv}_{\mathcal{N}_{\mathcal{S}}}<\infty$.
\end{rem}
\begin{proof}
Item (1) is obvious. We just need to prove item (2).
Denote $\mathcal{S}\backslash\{\textbf{0}\}$ by $\{\emph{\textbf{x}}_{1}, \ldots, \emph{\textbf{x}}_{L}\}$.
For any $\textbf{0}\neq\emph{\textbf{x}}_{l}=(\emph{\textbf{x}}_{l,1}, \emph{\textbf{x}}_{l, 2})^{T}\in \mathcal{S}$,
suppose that $\textbf{0}\neq\emph{\textbf{y}}=\|\emph{\textbf{y}}\|_{2}(\cos\theta_{\emph{\textbf{y}}},$
$ \sin\theta_{\emph{\textbf{y}}})^{T}$
such that $\emph{\textbf{x}}^{T}_{l}\emph{\textbf{y}}=0.$ Without loss of generality, let $\emph{\textbf{x}}_{l,2}\neq0$.
Then $\tan\theta_{\emph{\textbf{y}}}=-\frac{\emph{\textbf{x}}_{l,1}}{\emph{\textbf{x}}_{l,2}}$.
By $\mathcal{S}$ being  finite, the proof can be completed.
\end{proof}

The following direction vector set is related to a function.

\begin{defi}\label{definitin1234}
Suppose that $0\not\equiv g: \mathbb{R}^2\rightarrow \mathbb{C}$ is continuous. For   $\emph{\textbf{x}}_{0}\in
\mathbb{R}^{2}$ such that $g(\emph{\textbf{x}}_{0})\neq0$, let $\delta^{g}_{\emph{\textbf{x}}_{0}, \max}>0$
be the maximum value in $(0, \infty]$ such that for any $\emph{\textbf{x}}\in \mathring{\mathcal{D}}(\emph{\textbf{x}}_0,
\delta^{g}_{\emph{\textbf{x}}_{0}, \max})$ we have $g(\emph{\textbf{x}})\neq0,$ where $ \mathring{\mathcal{D}}(\emph{\textbf{x}}_0,
\delta^{g}_{\emph{\textbf{x}}_{0}, \max})$ is defined via \eqref{disck}.
Following Definition \ref{crictial} \eqref{dvdingyi},
the set of direction vectors $\hbox{dv}_{\mathring{\mathcal{D}}(\emph{\textbf{x}}_{0}, \delta^{g}_{\emph{\textbf{x}}_{0},\max})}$ of $\mathring{\mathcal{D}}(\emph{\textbf{x}}_{0}, \delta^{g}_{\emph{\textbf{x}}_{0},\max})$
is defined as
\begin{align}\label{416}\{(\cos\theta, \sin\theta): \textbf{0}\neq\emph{\textbf{x}}=\|\emph{\textbf{x}}\|_{2}(\cos\theta, \sin\theta)^{T}\in \mathring{\mathcal{D}}(\emph{\textbf{x}}_{0}, \delta^{g}_{\emph{\textbf{x}}_{0},\max})\}.\end{align}
\end{defi}

\begin{defi}\label{flagvector}
Suppose that $0\not\equiv\varphi\in L^{2}(\mathbb{R}^2)$ is compactly supported and vanishing
(i.e. $\widehat{\varphi}(\textbf{0})=0$).
Denote the nonzero set of $\widehat{\varphi}$ by $\mathcal{G}_{\widehat{\varphi}}$ such that $\widehat{\varphi}(\emph{\textbf{x}})\neq0$
for any $\emph{\textbf{x}}\in \mathcal{G}_{\widehat{\varphi}}$. Define
\begin{align}\label{DVV123}\hbox{DV}_{\widehat{\varphi}}=\bigcup_{\emph{\textbf{x}}\in \mathcal{G}_{\widehat{\varphi}}}\hbox{dv}_{\mathring{\mathcal{D}}(\emph{\textbf{x}},
\delta^{\widehat{\varphi}}_{\emph{\textbf{x}}, \max})},\end{align}
where $\hbox{dv}_{\mathring{\mathcal{D}}(\emph{\textbf{x}},
\delta^{\widehat{\varphi}}_{\emph{\textbf{x}}, \max})}$ is defined via  Definition \ref{definitin1234}.
Correspondingly, the angle set of $\hbox{DV}_{\widehat{\varphi}}$ is defined  as
\begin{align}\label{8tyr}\arg_{\hbox{DV}_{\widehat{\varphi}}} =\{\theta\in [0, 2\pi): (\cos\theta, \sin\theta)\in \hbox{DV}_{\widehat{\varphi}}\}.\end{align}
\end{defi}

\begin{prop}\label{lebesguecedu}
Let $\varphi$ and $\arg_{\hbox{DV}_{\widehat{\varphi}}} $ be as in Definition \ref{flagvector}.  Then
(1) the Lebesgue measure  $\mu(\arg_{\hbox{DV}_{\widehat{\varphi}}})$ of $\arg_{\hbox{DV}_{\widehat{\varphi}}} $ on
$\mathbb{R}$ is positive;  (2) $\widehat{\mathcal{R}_{\emph{\textbf{p}}}\varphi}=\widehat{\varphi}(\emph{\textbf{p}}^{T}\cdot)\not\equiv0$ for  $\emph{\textbf{p}}=(\cos\theta, \sin\theta)$ with any $\theta\in \arg_{\hbox{DV}_{\widehat{\varphi}}}$.
\end{prop}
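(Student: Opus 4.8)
The plan is to prove both items by directly unwinding Definitions \ref{crictial}, \ref{definitin1234} and \ref{flagvector}, using only two elementary facts: that $\widehat{\varphi}$ is continuous (indeed the restriction to $\mathbb{R}^{2}$ of an entire function, by the Paley--Wiener theorem, since $\varphi$ is compactly supported and lies in $L^{2}(\mathbb{R}^{2})\subseteq L^{1}(\mathbb{R}^{2})$), and that $\widehat{\varphi}\not\equiv0$ because $0\not\equiv\varphi$. Consequently the nonzero set $\mathcal{G}_{\widehat{\varphi}}$ is a nonempty open subset of $\mathbb{R}^{2}$; and since $\widehat{\varphi}(\textbf{0})=0$ it avoids the origin, so for every $\emph{\textbf{x}}\in\mathcal{G}_{\widehat{\varphi}}$ we have $\emph{\textbf{x}}\neq\textbf{0}$ and $0<\delta^{\widehat{\varphi}}_{\emph{\textbf{x}},\max}\leq\|\emph{\textbf{x}}\|_{2}<\infty$, the last inequality because $\textbf{0}$ is a zero of $\widehat{\varphi}$ at distance $\|\emph{\textbf{x}}\|_{2}$ from $\emph{\textbf{x}}$, so the disc $\mathring{\mathcal{D}}(\emph{\textbf{x}},\delta^{\widehat{\varphi}}_{\emph{\textbf{x}},\max})$ cannot reach it.

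Item (2) is then a pure definition chase, which I would carry out first. Fix $\theta\in\arg_{\hbox{DV}_{\widehat{\varphi}}}$ and $\emph{\textbf{p}}=(\cos\theta,\sin\theta)$. By \eqref{8tyr} and \eqref{DVV123}, $(\cos\theta,\sin\theta)\in\hbox{dv}_{\mathring{\mathcal{D}}(\emph{\textbf{x}},\delta^{\widehat{\varphi}}_{\emph{\textbf{x}},\max})}$ for some $\emph{\textbf{x}}\in\mathcal{G}_{\widehat{\varphi}}$, and then \eqref{416} produces a point $\emph{\textbf{y}}=\|\emph{\textbf{y}}\|_{2}(\cos\theta,\sin\theta)^{T}\in\mathring{\mathcal{D}}(\emph{\textbf{x}},\delta^{\widehat{\varphi}}_{\emph{\textbf{x}},\max})$ with $\emph{\textbf{y}}\neq\textbf{0}$. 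By the defining property of $\delta^{\widehat{\varphi}}_{\emph{\textbf{x}},\max}$ in Definition \ref{definitin1234} we get $\widehat{\varphi}(\emph{\textbf{y}})\neq0$; writing $\xi_{0}:=\|\emph{\textbf{y}}\|_{2}\in\mathbb{R}$ we have $\emph{\textbf{y}}=\emph{\textbf{p}}^{T}\xi_{0}$, hence $\widehat{\varphi}(\emph{\textbf{p}}^{T}\xi_{0})\neq0$, that is $\widehat{\varphi}(\emph{\textbf{p}}^{T}\cdot)\not\equiv0$. Since $\widehat{\mathcal{R}_{\emph{\textbf{p}}}\varphi}=\widehat{\varphi}(\emph{\textbf{p}}^{T}\cdot)$ by the Fourier slice identity \eqref{Radonform}, item (2) follows.

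For item (1) it suffices to exhibit one nonempty open interval of angles contained in $\arg_{\hbox{DV}_{\widehat{\varphi}}}$. I would fix any $\emph{\textbf{x}}_{0}\in\mathcal{G}_{\widehat{\varphi}}$ and set $\delta_{0}:=\delta^{\widehat{\varphi}}_{\emph{\textbf{x}}_{0},\max}$, so that, by the first paragraph, $\mathring{\mathcal{D}}(\emph{\textbf{x}}_{0},\delta_{0})$ is a nonempty open disc with $\textbf{0}\notin\mathring{\mathcal{D}}(\emph{\textbf{x}}_{0},\delta_{0})$ and $0<\delta_{0}\leq\|\emph{\textbf{x}}_{0}\|_{2}$. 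A short elementary computation---minimizing $\|r\emph{\textbf{u}}-\emph{\textbf{x}}_{0}\|_{2}^{2}$ over $r>0$ for a unit vector $\emph{\textbf{u}}$, or simply the observation that $\emph{\textbf{x}}\mapsto\emph{\textbf{x}}/\|\emph{\textbf{x}}\|_{2}$ is an open map on $\mathbb{R}^{2}\setminus\{\textbf{0}\}$---shows that $\hbox{dv}_{\mathring{\mathcal{D}}(\emph{\textbf{x}}_{0},\delta_{0})}$ is exactly the open arc of unit vectors whose angle to $\emph{\textbf{x}}_{0}$ is less than $\arcsin(\delta_{0}/\|\emph{\textbf{x}}_{0}\|_{2})$. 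By \eqref{DVV123} and \eqref{8tyr}, $\arg_{\hbox{DV}_{\widehat{\varphi}}}$ then contains an open interval of angles (centered at the polar angle of $\emph{\textbf{x}}_{0}$, read modulo $2\pi$) whose Lebesgue measure is $2\arcsin(\delta_{0}/\|\emph{\textbf{x}}_{0}\|_{2})>0$, which proves (1).

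The whole argument is essentially bookkeeping; the one step that calls for a little genuine care is the geometric identification in item (1) of the direction-vector set of the disc with an arc of strictly positive angular length. Its key input is precisely the vanishing hypothesis $\widehat{\varphi}(\textbf{0})=0$: this is what forces $\mathring{\mathcal{D}}(\emph{\textbf{x}}_{0},\delta_{0})$ to miss the origin and keeps $\delta_{0}$ finite, so that the normalization map is well behaved on the whole disc and its image is a proper, nonempty open arc rather than the full circle. Beyond that I foresee no obstacle.
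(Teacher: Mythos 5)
Your proposal is correct and takes essentially the same route as the paper: for (1) both arguments pick a single point $\emph{\textbf{x}}_{0}$ of the nonzero set $\mathcal{G}_{\widehat{\varphi}}$ and use the positive radius $\delta^{\widehat{\varphi}}_{\emph{\textbf{x}}_{0},\max}$ of its disc to produce an angle set of positive Lebesgue measure, and for (2) both unwind Definitions \ref{crictial}--\ref{flagvector} to obtain some $\gamma>0$ with $\widehat{\varphi}(\gamma\emph{\textbf{p}}^{T})\neq0$ and conclude via the slice identity $\widehat{\mathcal{R}_{\emph{\textbf{p}}}\varphi}=\widehat{\varphi}(\emph{\textbf{p}}^{T}\cdot)$. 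Your version of (1) is merely more quantitative (identifying the arc and its length $2\arcsin(\delta_{0}/\|\emph{\textbf{x}}_{0}\|_{2})$) than the paper's brief measure-positivity assertion, which is a harmless refinement rather than a different method.
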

\begin{proof}
We first prove item (1).
Since $0\not\equiv\varphi \in L^2(\mathbb{R}^2)$ is compactly supported,   $0\not\equiv\widehat{\varphi}\in C^{\infty}(\mathbb{R}^2).$
Then the nonzero set  $\mathcal{G}_{\widehat{\varphi}}$ of $\widehat{\varphi}$ is not empty.  Choose any $\emph{\textbf{x}}\in \mathcal{G}_{\widehat{\varphi}}$
and consider $\hbox{dv}_{\mathring{\mathcal{D}}(\emph{\textbf{x}},
\delta^{\widehat{\varphi}}_{\emph{\textbf{x}}, \max})}.$ As in \eqref{8tyr}, define the angle set of
$\hbox{dv}_{\mathring{\mathcal{D}}(\emph{\textbf{x}},
\delta^{\widehat{\varphi}}_{\emph{\textbf{x}}, \max})}$ as $\arg_{\hbox{dv}_{\mathring{\mathcal{D}}(\emph{\textbf{x}},
\delta^{\widehat{\varphi}}_{\emph{\textbf{x}}, \max})}}=\{\theta\in [0, 2\pi): (\cos\theta, \sin\theta)\in
\hbox{dv}_{\mathring{\mathcal{D}}(\emph{\textbf{x}},
\delta^{\widehat{\varphi}}_{\emph{\textbf{x}}, \max})}\}.$
Since $\delta^{\widehat{\varphi}}_{\emph{\textbf{x}}, \max}>0$,
 the Lebesgue measure $\mu(\arg_{\hbox{dv}_{\mathring{\mathcal{D}}(\emph{\textbf{x}},
\delta^{\widehat{\varphi}}_{\emph{\textbf{x}}, \max})}})>0.$ Therefore, $\mu(\arg_{\hbox{DV}_{\widehat{\varphi}}})$
$>0.$

Next we prove item (2). For any $\theta\in \arg_{\hbox{DV}_{\widehat{\varphi}}}$, by \eqref{8tyr} the  corresponding direction vector
 $\emph{\textbf{p}}=(\cos\theta, \sin\theta)$
 $\in \hbox{DV}_{\widehat{\varphi}}$. Now by \eqref{DVV123}
 there exists $\emph{\textbf{x}}\in \mathcal{G}_{\widehat{\varphi}}$
 such that $\widehat{\varphi}(\emph{\textbf{x}})\neq0$ and $\emph{\textbf{p}}\in
 \hbox{dv}_{\mathring{\mathcal{D}}(\emph{\textbf{x}},
\delta^{\widehat{\varphi}}_{\emph{\textbf{x}}, \max})}.$ By the definition of $ \mathring{\mathcal{D}}(\emph{\textbf{x}},
\delta^{\widehat{\varphi}}_{\emph{\textbf{x}}, \max})$ in Definition \ref{definitin1234}, there exists $\gamma>0$
such that $\widehat{\varphi}(\gamma \emph{\textbf{p}}^{T})\neq0$. Therefore,   $\widehat{\mathcal{R}_{\emph{\textbf{p}}}\varphi}(\gamma)=\widehat{\varphi}(\gamma \emph{\textbf{p}}^{T})\neq0$.
By Lemma \ref{pouy}, $\mathcal{R}_{\emph{\textbf{p}}}\varphi$ is compactly supported and consequently
  $\widehat{\mathcal{R}_{\emph{\textbf{p}}}\varphi}\in C^{\infty}(\mathbb{R})$.
Now by the continuity of $\widehat{\mathcal{R}_{\emph{\textbf{p}}}\varphi}$ one can prove that
$\widehat{\mathcal{R}_{\emph{\textbf{p}}}\varphi}\not\equiv0$.
This completes the proof.
\end{proof}

\subsection{The first main result: SACT sampling for compactly supported functions in a SIS generated by a non-vanishing generator $\varphi$}
\label{1stresult}
The following is the first main theorem in this section.

\begin{theo}\label{yibanshengcyuan}
Suppose that $\varphi\in L^{2}(\mathbb{R}^{2})$ is compactly supported   such that
$\hbox{supp}(\varphi) \subseteq[N_{1}, M_{1}]\times [N_{2}, M_{2}]$, $\{\varphi(\cdot-\emph{\textbf{k}}): \emph{\textbf{k}}\in \mathbb{Z}^{2}\}$
is linearly independent and

(i) the  Sobolev smoothness $\nu_{2}(\varphi)>1/2$,

(ii)  $\widehat{\varphi}(\textbf{0})=
\int_{\mathbb{R}^{2}}\varphi(\emph{\textbf{x}})d\emph{\textbf{x}}\neq0$ (non-vanishing property). \\
As previously, suppose that $f\in V(\varphi, \mathbb{Z}^2)$ is an arbitrary target function
such that $\hbox{supp}(f)\subseteq[a_{1}, b_{1}]\times [a_{2}, b_{2}]$. Correspondingly, define two sets
$$ E=\big\{\big[\lceil a_{1}-M_{1}\rceil, \lfloor b_{1}-N_{1}\rfloor\big]\times
\big[\lceil a_{2}-M_{2}\rceil, \lfloor b_{2}-N_{2}\rfloor\big]\big\}\cap \mathbb{Z}^{2}$$
and
\begin{align}E^{+}=\left\{
\begin{array}{cccc}
   \emptyset,&\#E=1,\\
  \{\emph{\textbf{x}}-\emph{\textbf{y}}:
\emph{\textbf{x}}\neq\emph{\textbf{y}}\in E\},&\#E>1.
\end{array}
\right.\end{align}
Then for any $\emph{\textbf{p}}\in \{(\cos\theta, \sin\theta): \theta\in [0, 2\pi)\}\setminus\hbox{dv}_{\mathcal{N}_{E^{+}}}$,
there exists a sampling   set $X_{\emph{\textbf{p}}}\subseteq \mathbb{R}$ having the  cardinality $\#X_{\emph{\textbf{p}}}=\#E$ such that
$f$ can be determined by its  SA Radon (w.r.t $\emph{\textbf{p}}$) samples at $X_{\emph{\textbf{p}}}$.
\end{theo}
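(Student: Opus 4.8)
The plan is to apply Theorem \ref{theorem123}, so the task reduces to finding, for each admissible $\emph{\textbf{p}}$, a sampling set $X_{\emph{\textbf{p}}}$ of cardinality $\#E$ making the matrix $A_{\varphi,\emph{\textbf{p}},X_{\emph{\textbf{p}}}}$ invertible. Before that can even be posed, I would first check the hypotheses of Theorem \ref{theorem123} are met. Condition (i), $\nu_2(\varphi)>1/2$, together with Proposition \ref{lianxuxing}(2) guarantees that $\mathcal{R}_{\emph{\textbf{p}}}\varphi$ is continuous for every direction vector $\emph{\textbf{p}}$, which is exactly the regularity needed. So the real content is the existence of a good $X_{\emph{\textbf{p}}}$.

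The key step is to verify that for $\emph{\textbf{p}}\notin\hbox{dv}_{\mathcal{N}_{E^{+}}}$ the system $\{\mathcal{R}_{\emph{\textbf{p}}}\varphi(\cdot-\emph{\textbf{p}}\emph{\textbf{k}}_{j}):j=1,\dots,\#E\}$ is linearly independent in $L^2(\mathbb{R})$. By Proposition \ref{Remark3.2}, this is equivalent to two conditions: that $\widehat{\varphi}(\emph{\textbf{p}}^{T}\cdot)\not\equiv 0$, and that the scalars $\emph{\textbf{p}}\emph{\textbf{k}}_1,\dots,\emph{\textbf{p}}\emph{\textbf{k}}_{\#E}$ are pairwise distinct. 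The second is precisely the meaning of $\emph{\textbf{p}}\notin\hbox{dv}_{\mathcal{N}_{E^{+}}}$: if $\emph{\textbf{p}}\emph{\textbf{k}}_j=\emph{\textbf{p}}\emph{\textbf{k}}_n$ for some $j\neq n$, then $\emph{\textbf{p}}^{T}$ is orthogonal to $\emph{\textbf{k}}_j-\emph{\textbf{k}}_n\in E^{+}$, so $\emph{\textbf{p}}^{T}\in\mathcal{N}_{E^{+}}$, i.e. $\emph{\textbf{p}}\in\hbox{dv}_{\mathcal{N}_{E^{+}}}$; excluding this set rules it out. The first condition is where the non-vanishing hypothesis (ii) enters: since $\widehat{\varphi}$ is continuous (indeed $C^{\infty}$, by Paley--Wiener) and $\widehat{\varphi}(\textbf{0})\neq 0$, we have $\widehat{\varphi}(\gamma\emph{\textbf{p}}^{T})\neq 0$ for $\gamma$ near $0$, so $\widehat{\mathcal{R}_{\emph{\textbf{p}}}\varphi}=\widehat{\varphi}(\emph{\textbf{p}}^{T}\cdot)\not\equiv 0$ for every $\emph{\textbf{p}}$. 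Hence by Proposition \ref{Remark3.2} the shift system is linearly independent.

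Given linear independence of $\{\mathcal{R}_{\emph{\textbf{p}}}\varphi(\cdot-\emph{\textbf{p}}\emph{\textbf{k}}_j)\}_{j=1}^{\#E}$ together with continuity of $\mathcal{R}_{\emph{\textbf{p}}}\varphi$ (hence of each shift) and the compact support from Lemma \ref{pouy}, the final step is a standard argument producing sampling points: one selects $x_1,\dots,x_{\#E}$ inductively so that at each stage the newly added row of $A_{\varphi,\emph{\textbf{p}},X}$ is not in the span of the previous rows. Concretely, suppose $x_1,\dots,x_m$ have been chosen with the first $m$ rows linearly independent; the functions in the span of those rows that vanish at $x_1,\dots,x_m$ form a proper subspace of $\operatorname{span}\{\mathcal{R}_{\emph{\textbf{p}}}\varphi(\cdot-\emph{\textbf{p}}\emph{\textbf{k}}_j)\}$, so by linear independence there is a continuous function in the full span not identically zero on the complement, and one picks $x_{m+1}$ where it is nonzero. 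After $\#E$ steps the matrix is invertible, and Theorem \ref{theorem123} gives the determination of $f$ from its $\#E$ samples. The main obstacle is the bookkeeping in identifying $\emph{\textbf{p}}\notin\hbox{dv}_{\mathcal{N}_{E^{+}}}$ with pairwise-distinctness of the $\emph{\textbf{p}}\emph{\textbf{k}}_j$ and in correctly invoking Proposition \ref{Remark3.2}; the existence of $X_{\emph{\textbf{p}}}$ itself is then routine.
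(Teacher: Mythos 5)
Your proposal is correct, and its first half (reduction to Theorem \ref{theorem123}, continuity of $\mathcal{R}_{\emph{\textbf{p}}}\varphi$ via Proposition \ref{lianxuxing}(2), and linear independence of $\{\mathcal{R}_{\emph{\textbf{p}}}\varphi(\cdot-\emph{\textbf{p}}\emph{\textbf{k}}_{j})\}$ via Proposition \ref{Remark3.2}, with the non-vanishing hypothesis giving $\widehat{\varphi}(\emph{\textbf{p}}^{T}\cdot)\not\equiv0$ and $\emph{\textbf{p}}\notin\hbox{dv}_{\mathcal{N}_{E^{+}}}$ giving pairwise distinctness of the $\emph{\textbf{p}}\emph{\textbf{k}}_{j}$) coincides with the paper's argument. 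Where you genuinely diverge is the existence of $X_{\emph{\textbf{p}}}$: you use the soft inductive selection argument — since $L^2$-independence plus continuity gives pointwise linear independence, the span is $\#E$-dimensional as a space of functions, so at each stage a nonzero element of the span vanishing at the previously chosen points exists and a point where it is nonzero yields a new row outside the span of the old ones; after $\#E$ steps the sample matrix is invertible. This is valid and is enough for the bare existence claim in the statement. The paper instead runs a quantitative construction: it extracts Riesz-type bounds $C_{1,\emph{\textbf{p}}},C_{2,\emph{\textbf{p}}}$ for the finite system, uses uniform continuity on $[L_{\emph{\textbf{p}},1},L_{\emph{\textbf{p}},2}]$ to build an explicit finite grid $Y_{\emph{\textbf{p}}}$ with mesh $\delta_{\emph{\textbf{p}}}$, approximates the shifted Radon transforms by step functions, and shows by a perturbation estimate that the lower Riesz bound survives, forcing an invertible $\#E\times\#E$ submatrix with nodes taken from $Y_{\emph{\textbf{p}}}$. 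What the paper's heavier route buys is exactly the constructive handle exploited later: when $\varphi\in C^{1}(\mathbb{R}^{2})$, $\delta_{\emph{\textbf{p}}}$ can be written explicitly in terms of $\lambda_{\min}(G_{\varphi,\emph{\textbf{p}}})$ and $\|\varphi_{1}\|_{\infty},\|\varphi_{2}\|_{\infty}$, giving the explicit $Y_{\emph{\textbf{p}}}$ of Theorem \ref{xianshibiaoshi} and Proposition \ref{hbvcxz}; your argument gives no localization of the sampling points. Two minor remarks: the paper also checks that the admissible direction set is nonempty (via $\#\hbox{dv}_{\mathcal{N}_{E^{+}}}<\infty$), which you omit — not needed for the literal ``for any $\emph{\textbf{p}}$'' statement, but worth noting so the theorem is not vacuous; and your inductive step is phrased loosely (``functions in the span of those rows''), though the intended argument, choosing $x_{m+1}$ where a nonzero element of $\operatorname{span}\{\mathcal{R}_{\emph{\textbf{p}}}\varphi(\cdot-\emph{\textbf{p}}\emph{\textbf{k}}_{j})\}$ vanishing at $x_{1},\ldots,x_{m}$ is nonzero, does make the new row independent of the previous ones.
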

\begin{proof}
Denote $E=\{\emph{\textbf{k}}_{1}, \ldots, \emph{\textbf{k}}_{\#E}\}$.
We first prove  $\{(\cos\theta, \sin\theta): \theta\in [0, 2\pi)\}\setminus\hbox{dv}_{\mathcal{N}_{E^{+}}}$
is not empty. It is sufficient to prove  that $\#\hbox{dv}_{\mathcal{N}_{E^{+}}}<\infty$.
If $\#E=1$ then $E^{+}=\emptyset$ and by Definition \ref{crictial} (1) we have
$\hbox{dv}_{\mathcal{N}_{E^{+}}}=\emptyset$ and $\#\hbox{dv}_{\mathcal{N}_{E^{+}}}=0$.
If $\#E>1$ then $\#E^{+}=\#E(\#E-1)<\infty$. By Proposition  \ref{guanyucedu} (2) we have $\#\hbox{dv}_{\mathcal{N}_{E^{+}}}<\infty$.

Since $\hbox{supp}(\varphi)\subseteq [N_{1}, M_{1}]\times
 [N_{2}, M_{2}]$ and $\hbox{supp}(f)\subseteq[a_{1}, b_{1}]\times [a_{2}, b_{2}]$,
 as in \eqref{XKYYZ}  we denote   $f=\sum^{\#E}_{l=1}c_{\emph{\textbf{k}}_{l}}\varphi(\cdot-\emph{\textbf{k}}_{l})$
 for $(c_{\emph{\textbf{k}}_{1}}, \ldots, c_{\emph{\textbf{k}}_{\#E}})\in \mathbb{C}^{\#E}$.
Consequently,  by \eqref{Y00K} we have \begin{align}\label{9xy0876}\mathcal{R}_{\emph{\textbf{p}}}f=\sum^{\#E}_{l=1}c_{\emph{\textbf{k}}_{l}}\mathcal{R}_{\emph{\textbf{p}}}\varphi(\cdot-\emph{\textbf{p}}\emph{\textbf{k}}_{l}).\end{align}
We first prove that for any $\emph{\textbf{p}}\in \{(\cos\theta, \sin\theta): \theta\in [0, 2\pi)\}\setminus\hbox{dv}_{\mathcal{N}_{E^{+}}}$, the system $\{\mathcal{R}_{\emph{\textbf{p}}}\varphi(\cdot-\emph{\textbf{p}}\emph{\textbf{k}}_{l}): l=1, \ldots, \#E\}$
is linearly independent. For the equivalence of the linear independence established in Proposition \ref{Remark3.2} for  the above system,
we just need to prove that Proposition \ref{Remark3.2} (3) is satisfied
any $\emph{\textbf{p}}\in \{(\cos\theta, \sin\theta): \theta\in [0, 2\pi)\}\setminus\hbox{dv}_{\mathcal{N}_{E^{+}}}$.
Clearly, $\widehat{\mathcal{R}_{\emph{\textbf{p}}}\varphi}(0)=\widehat{\varphi}(\textbf{0})\neq0$
for any $\emph{\textbf{p}}$. Then  \begin{align}\label{PLX}\widehat{\mathcal{R}_{\emph{\textbf{p}}}\varphi}=\widehat{\varphi}(\emph{\textbf{p}}^{T}\cdot)\not\equiv0.\end{align}
On the other hand, if $E^{+}=\emptyset$ then $\hbox{dv}_{\mathcal{N}_{E^{+}}}=\emptyset$. This combining \eqref{PLX} implies that    item (3)
of Proposition \ref{Remark3.2} is naturally  satisfied for any $\emph{\textbf{p}}\in \{(\cos\theta, \sin\theta):
\theta\in [0, 2\pi)\}$. If  $E^{+}\neq\emptyset$ then
it follows from the  definition of $\mathcal{N}_{E^{+}}$ in Definition \ref{crictial} (2)
that for any $\emph{\textbf{p}}\notin\hbox{dv}_{\mathcal{N}_{E^{+}}}$ we have
 $\emph{\textbf{p}}\emph{\textbf{k}}_{l}\neq \emph{\textbf{p}}\emph{\textbf{k}}_{n}$
for any $l\neq n\in \{1, \ldots, \#E\}$. That is, for  the case that $E^{+}\neq\emptyset$  item (3)
of Proposition \ref{Remark3.2} is also  satisfied.    Then it follows from Proposition \ref{Remark3.2} that  $\{\mathcal{R}_{\emph{\textbf{p}}}\varphi(\cdot-\emph{\textbf{p}}\emph{\textbf{k}}_{l}): l=1, \ldots, \#E\}$ is linearly independent.

By the above independence   there exist constants  $0<C_{1,\emph{\textbf{p}}}\leq C_{2,\emph{\textbf{p}}}<\infty$ such that
\begin{align}\label{ffttt}
C_{1,\emph{\textbf{p}}}\sum^{\#E}_{l=1}|d_{\emph{\textbf{k}}_{l}}|^{2}\leq\int_{\mathbb{R}}|\sum^{\#E}_{l=1}d_{\emph{\textbf{k}}_{l}}\mathcal{R}_{\emph{\textbf{p}}}\varphi(x-\emph{\textbf{p}}\emph{\textbf{k}}_{l})|^{2}dx\leq
C_{2,\emph{\textbf{p}}}\sum^{\#E}_{l=1}|d_{\emph{\textbf{k}}_{l}}|^{2}
\end{align}
for any $(d_{\emph{\textbf{k}}_{1}}, \ldots, d_{\#E})^{T}\in \mathbb{R}^{\#E}$.
On  the other hand, it follows from Proposition \ref{pouy} \eqref{qujian} that
$\hbox{supp}(\mathcal{R}_{\emph{\textbf{p}}}\varphi)\subseteq [-L_{\varphi}, L_{\varphi}]$,
where $L_{\varphi}=\sqrt{2}\max\{|N_{i}|, |M_{i}|: i=1, 2\}$. Denote $a_{\emph{\textbf{p}},1}=\min\{{\emph{\textbf{p}}}\emph{\textbf{k}}_{l}:
l=1, \ldots, \#E\}$ and $a_{\emph{\textbf{p}},2}=\max\{{\emph{\textbf{p}}}\emph{\textbf{k}}_{l}:
l=1, \ldots, \#E\}$. One can check that
$$\hbox{supp}\big(\sum^{\#E}_{l=1}d_{\emph{\textbf{k}}_{l}}\mathcal{R}_{\emph{\textbf{p}}}\varphi(\cdot-\emph{\textbf{p}}\emph{\textbf{k}}_{l})\big)\subseteq[
L_{\emph{\textbf{p}},1}, L_{\emph{\textbf{p}},2}],$$
 where $L_{\emph{\textbf{p}},1}=-L_{\varphi}+a_{\emph{\textbf{p}},1}$
and $L_{\emph{\textbf{p}},2}=L_{\varphi}+a_{\emph{\textbf{p}},2}$.
Then \eqref{ffttt} is equivalent to
\begin{align}\label{ffttt134}
C_{1,\emph{\textbf{p}}}\sum^{\#E}_{l=1}|d_{\emph{\textbf{k}}_{l}}|^{2}\leq\int^{L_{\emph{\textbf{p}},2}}_{L_{\emph{\textbf{p}},1}}|\sum^{\#E}_{l=1}d_{\emph{\textbf{k}}_{l}}\mathcal{R}_{\emph{\textbf{p}}}\varphi(x-\emph{\textbf{p}}\emph{\textbf{k}}_{l})|^{2}dx\leq
C_{2,\emph{\textbf{p}}}\sum^{\#E}_{l=1}|d_{\emph{\textbf{k}}_{l}}|^{2}.
\end{align}
The rest of the proof is to find  a sampling set $X_{\emph{\textbf{p}}}\subseteq \mathbb{R}$
with the cardinality $\#X_{\emph{\textbf{p}}}=\#E$ such that
$f$ can be determined by its  SA Radon (w.r.t $\emph{\textbf{p}}$) samples at $X_{\emph{\textbf{p}}}$.
Since $\nu_{2}(\varphi)>1/2$,
by Proposition  \ref{lianxuxing} (2) we have that $\mathcal{R}_{\emph{\textbf{p}}}\varphi$ is continuous.
Consequently, all  $\mathcal{R}_{\emph{\textbf{p}}}\varphi(\cdot-\emph{\textbf{p}}\emph{\textbf{k}}_{l}),
l=1, \ldots, \#E$
are uniformly continuous  on the interval $[L_{\emph{\textbf{p}},1}, L_{\emph{\textbf{p}},2}]$.
Then  there exists $\delta_{\emph{\textbf{p}}}\leq (L_{\emph{\textbf{p}},2}-L_{\emph{\textbf{p}},1})$ such that for
 any $l\in \{1, \ldots, \#E\}$ and
 any $x^{'}, x^{''}\in [L_{\emph{\textbf{p}},1}, L_{\emph{\textbf{p}},2}]$  satisfying $|x^{'}-x^{''}|<\delta_{\emph{\textbf{p}}}$ we have
\begin{align}\label{dkkds}|\mathcal{R}_{\emph{\textbf{p}}}\varphi(x^{'}-\emph{\textbf{p}}\emph{\textbf{k}}_{l})-\mathcal{R}_{\emph{\textbf{p}}}\varphi(x^{''}-\emph{\textbf{p}}\emph{\textbf{k}}_{l})|\leq\sqrt{\frac{C_{1,\emph{\textbf{p}}}}{3\#E(L_{\emph{\textbf{p}},2}-L_{\emph{\textbf{p}},1})}}.\end{align}
Now let $K_{\emph{\textbf{p}}}=\lceil\frac{L_{\emph{\textbf{p}},2}-L_{\emph{\textbf{p}},1}}{\delta_{\emph{\textbf{p}}}}\rceil$. Construct $$Y_{\emph{\textbf{p}}}=\{x_{k}=L_{\emph{\textbf{p}},1}+\frac{L_{\emph{\textbf{p}},2}-L_{\emph{\textbf{p}},1}}{K_{\emph{\textbf{p}}}}(k-1): k=1, \ldots, K_{\emph{\textbf{p}}}+1\}$$
such that \begin{align}\label{KPPP} |x_{k}-x_{j}|\leq\delta_{\emph{\textbf{p}}}\end{align} for any $x_{k}, x_{j}$.
Define an approximation to $\mathcal{R}_{\emph{\textbf{p}}}(\cdot-\emph{\textbf{p}}\emph{\textbf{k}}_{l})$ as $h_{l}(x)=\sum^{K_{\emph{\textbf{p}}}}_{k=1}\mathcal{R}_{\emph{\textbf{p}}}\varphi(x_{k}-\emph{\textbf{p}}\emph{\textbf{k}}_{l})\chi_{[x_{k},x_{k+1})}(x)$.
Then    one can check that
\begin{align}\label{cxznew}\begin{array}{lllllllll}
\displaystyle \int^{L_{\emph{\textbf{p}},2}}_{L_{\emph{\textbf{p}},1}}|\sum^{\#E}_{l=1}d_{\emph{\textbf{k}}_{l}}(\mathcal{R}_{\emph{\textbf{p}}}\varphi(x-\emph{\textbf{p}}\emph{\textbf{k}}_{l})-h_{l}(x))|^{2}dx\\
\displaystyle \leq\sum^{\#E}_{j=1}|d_{\emph{\textbf{k}}_{j}}|^{2}\int^{L_{\emph{\textbf{p}},2}}_{L_{\emph{\textbf{p}},1}}\sum^{\#E}_{l=1}|\mathcal{R}_{\emph{\textbf{p}}}\varphi(x-\emph{\textbf{p}}\emph{\textbf{k}}_{l})-h_{l}(x)|^{2}dx \ (\ref{cxznew} A)\\
\displaystyle =\sum^{\#E}_{j=1}|d_{\emph{\textbf{k}}_{j}}|^{2}\sum^{K_{\emph{\textbf{p}}}}_{n=1}\int^{x_{n+1}}_{x_{n}}\sum^{\#E}_{l=1}|\mathcal{R}_{\emph{\textbf{p}}}\varphi(x-\emph{\textbf{p}}\emph{\textbf{k}}_{l})-h_{l}(x)|^{2}dx
\end{array}
\end{align}
where (\ref{cxznew}A) is derived from the   Cauchy-Schwart inequality.
We continue to estimate (\ref{cxznew}) as follows,

\begin{align}\label{cxz}\begin{array}{lllllllll}
\displaystyle
\int^{L_{\emph{\textbf{p}},2}}_{L_{\emph{\textbf{p}},1}}|\sum^{\#E}_{l=1}d_{\emph{\textbf{k}}_{l}}(\mathcal{R}_{\emph{\textbf{p}}}\varphi(x-\emph{\textbf{p}}\emph{\textbf{k}}_{l})-h_{l}(x))|^{2}dx\\
\leq\displaystyle \sum^{\#E}_{j=1}|d_{\emph{\textbf{k}}_{j}}|^{2}\sum^{K_{\emph{\textbf{p}}}}_{n=1}\int^{x_{n+1}}_{x_{n}}\sum^{\#E}_{l=1}|\mathcal{R}_{\emph{\textbf{p}}}\varphi(x-\emph{\textbf{p}}\emph{\textbf{k}}_{l})-h_{l}(x)|^{2}dx\\
=\displaystyle \sum^{\#E}_{j=1}|d_{\emph{\textbf{k}}_{j}}|^{2}\sum^{K_{\emph{\textbf{p}}}}_{n=1}\int^{x_{n+1}}_{x_{n}}\sum^{\#E}_{l=1}|\mathcal{R}_{\emph{\textbf{p}}}\varphi(x-\emph{\textbf{p}}\emph{\textbf{k}}_{l})-\mathcal{R}_{\emph{\textbf{p}}}\varphi(x_{n}-\emph{\textbf{p}}\emph{\textbf{k}})|^{2}dx
\ (\ref{cxz} A) \\
\displaystyle \leq\sum^{\#E}_{j=1}|d_{\emph{\textbf{k}}_{j}}|^{2}\#E\frac{C_{1,\emph{\textbf{p}}}}{3\#E(L_{\emph{\textbf{p}},2}-L_{\emph{\textbf{p}},1})}K_{\emph{\textbf{p}}}\delta_{\emph{\textbf{p}}}
\ (\ref{cxz} B)\\
\displaystyle \leq\frac{C_{1,\emph{\textbf{p}}}}{3}\sum^{\#E}_{j=1}|d_{\emph{\textbf{k}}_{j}}|^{2}, \ (\ref{cxz} C)
\end{array}
\end{align}
where
(\ref{cxz}$A$) is from the definition of $h_{l}(x)$, (\ref{cxz}$B$)
is from \eqref{dkkds} and \eqref{KPPP}, and (\ref{cxz}$C$) is from  $K_{\emph{\textbf{p}}}\delta_{\emph{\textbf{p}}}\leq L_{\emph{\textbf{p}},2}-L_{\emph{\textbf{p}},1}.$
Then
\begin{align}\label{1hbcxzcx}\begin{array}{lllllllll}
\displaystyle (\int^{L_{\emph{\textbf{p}},2}}_{L_{\emph{\textbf{p}},1}}|\sum^{\#E}_{l=1}d_{\emph{\textbf{k}}_{l}}h_{l}(x)|^{2}dx)^{1/2}&\geq
\displaystyle-(\int^{L_{\emph{\textbf{p}},2}}_{L_{\emph{\textbf{p}},1}}|\sum^{\#E}_{l=1}d_{\emph{\textbf{k}}_{l}}(\mathcal{R}_{\emph{\textbf{p}}}\varphi(x-\emph{\textbf{p}}\emph{\textbf{k}}_{l})-h_{l}(x))|^{2}dx)^{1/2} \ \ (\ref{1hbcxzcx} A)\\
&\displaystyle+(\int^{L_{\emph{\textbf{p}},2}}_{L_{\emph{\textbf{p}},1}}|\sum^{\#E}_{l=1}d_{\emph{\textbf{k}}_{l}}\mathcal{R}_{\emph{\textbf{p}}}\varphi(x-\emph{\textbf{p}}\emph{\textbf{k}}_{l})|^{2}dx)^{1/2}\\
&\displaystyle \geq(1-\sqrt{1/3})\sqrt{C_{1,\emph{\textbf{p}}}}(\sum^{\#E}_{l=1}|d_{\emph{\textbf{k}}_{l}}|^{2})^{1/2},\ \ (\ref{1hbcxzcx} B)
\end{array}
\end{align}
where  (\ref{1hbcxzcx}$A$) is from the triangle inequality, and
(\ref{1hbcxzcx}$B$) is from  \eqref{ffttt} and \eqref{cxz}.
Then  for any $(d_{\emph{\textbf{k}}_{1}}, \ldots, d_{\emph{\textbf{k}}_{\#E}})\neq \textbf{0}$ we have
\begin{align}\label{hbcx}\begin{array}{lllllllll}
\displaystyle 0<C_{1,\emph{\textbf{p}}}(1-\sqrt{1/3})^{2}\sum^{\#E}_{l=1}|d_{\emph{\textbf{k}}_{l}}|^{2}&\displaystyle \leq\int^{L_{\emph{\textbf{p}},2}}_{L_{\emph{\textbf{p}},1}}|\sum^{\#E}_{l=1}d_{\emph{\textbf{k}}_{l}}h_{l}(x)|^{2}dx\\
&=\displaystyle\sum^{K_{\emph{\textbf{p}}}}_{j=1}\int^{x_{j+1}}_{x_j}|\sum^{\#E}_{l=1}d_{\emph{\textbf{k}}_{l}}h_{l}(x)|^{2}dx\\
&=\displaystyle\sum^{K_{\emph{\textbf{p}}}}_{j=1}|\sum^{\#E}_{l=1}d_{\emph{\textbf{k}}_{l}}\mathcal{R}_{\emph{\textbf{p}}}\varphi(x_{j}-p\emph{\textbf{k}}_{l})|^{2}.
\end{array}
\end{align}
By \eqref{hbcx},  we conclude that
 there exists $X_{\emph{\textbf{p}}}:=\{x_{j_{1}}, \ldots, x_{j_{\#E}}\}\subseteq Y_{\emph{\textbf{p}}}$
such that the corresponding  $\#E\times \#E$ matrix
\begin{align}\label{fgvcb}
A_{\varphi, \emph{\textbf{p}},X_{\emph{\textbf{p}}}}=\left(\begin{array}{cccccccccc}\mathcal{R}_{\emph{\textbf{p}}}\varphi(x_{j_{1}}-\emph{\textbf{p}}\emph{\textbf{k}}_{1})&
\mathcal{R}_{\emph{\textbf{p}}}\varphi(x_{j_{1}}-\emph{\textbf{p}}\emph{\textbf{k}}_{2})&\cdots&\mathcal{R}_{\emph{\textbf{p}}}\varphi(x_{j_{1}}-\emph{\textbf{p}}\emph{\textbf{k}}_{\#E})\\
\mathcal{R}_{\emph{\textbf{p}}}\varphi(x_{j_{2}}-\emph{\textbf{p}}\emph{\textbf{k}}_{1})&
\mathcal{R}_{\emph{\textbf{p}}}\varphi(x_{j_{2}}-\emph{\textbf{p}}\emph{\textbf{k}}_{2})&\cdots&\mathcal{R}_{\emph{\textbf{p}}}\varphi(x_{j_{2}}-\emph{\textbf{p}}\emph{\textbf{k}}_{\#E})\\
\vdots&\vdots&\ddots&\vdots\\
\mathcal{R}_{\emph{\textbf{p}}}\varphi(x_{j_{\#E}}-\emph{\textbf{p}}\emph{\textbf{k}}_{1})&
\mathcal{R}_{\emph{\textbf{p}}}\varphi(x_{j_{\#E}}-\emph{\textbf{p}}\emph{\textbf{k}}_{2})&\cdots&\mathcal{R}_{\emph{\textbf{p}}}\varphi(x_{j_{\#E}}-\emph{\textbf{p}}\emph{\textbf{k}}_{\#E})
\end{array}\right)
\end{align}
is invertible.
Now  by Theorem \ref{theorem123},
the target function $f$ can be determined uniquely by its Radon (w.r.t $\emph{\textbf{p}}$) samples
at $X_{\emph{\textbf{p}}}$. Specifically, the vector $(c_{\emph{\textbf{p}}\emph{\textbf{k}}_{1}}, \ldots, c_{\emph{\textbf{p}}\emph{\textbf{k}}_{\#E}})^{T}$
can be determined by
%
\begin{align}\label{huiyong}(c_{\emph{\textbf{p}}\emph{\textbf{k}}_{1}}, \ldots, c_{\emph{\textbf{p}}\emph{\textbf{k}}_{\#E}})^{T}=A^{-1}_{\varphi, \emph{\textbf{p}}, X_{\emph{\textbf{p}}}}(\mathcal{R}_{\emph{\textbf{p}}}f(x_{j_{1}}), \ldots, \mathcal{R}_{\emph{\textbf{p}}}f(x_{j_{\#E}}))^{T}.\end{align}
This completes the proof.
\end{proof}

In what follows we explain  why  the condition $\nu_{2}(\varphi)>1/2$ in Theorem \ref{yibanshengcyuan} is required.

\begin{rem}
Since $\nu_{2}(\varphi)>1/2$, by   Proposition \ref{lianxuxing} (2)
we conclude that $\mathcal{R}_{\emph{\textbf{p}}}\varphi$ is continuous.
If such a condition is not satisfied, then $\mathcal{R}_{\emph{\textbf{p}}}\varphi$
may be discontinuous for some $\emph{\textbf{p}}.$ As in Remark \ref{kenenglianxu}, let  $\varphi(x_{1}, x_{2})=\chi_{(0,1]}(x_{1})\chi_{(0,1]}(x_{2})$.
Through the direct calculation we have $\widehat{\varphi}(\xi_{1}, \xi_{2})=\frac{1-e^{-\texttt{i}\xi_{1}}}{\texttt{i}\xi_{1}}
\frac{1-e^{-\texttt{i}\xi_{2}}}{\texttt{i}\xi_{2}}$. By the Sobolev smoothness definition  in subsection \ref{sobolevsmoothness} one can check that
$\nu_{2}(\varphi)=1/2$. If $\emph{\textbf{p}}=(1, 0)$ or $(0, 1)$ then $\mathcal{R}_{\emph{\textbf{p}}}\varphi=\chi_{(0, 1]}$ which
is discontinuous. As a result, there may not exist $\delta_{\emph{\textbf{p}}}$ such that \eqref{dkkds} holds.
\end{rem}

\begin{rem}\label{remmak1234}
Define the $\#E\times\#E$ Gram matrix
\begin{align}\label{Grammatrix}
G_{\varphi, \emph{\textbf{p}}}=\Big(\langle \mathcal{R}_{\emph{\textbf{p}}}\varphi(\cdot-\emph{\textbf{p}}\emph{\textbf{k}}_{j}),
\mathcal{R}_{\emph{\textbf{p}}}\varphi(\cdot-\emph{\textbf{p}}\emph{\textbf{k}}_{n})\rangle\Big)^{\#E}_{j,n=1},
\end{align}
where the inner product  $\langle \mathcal{R}_{\emph{\textbf{p}}}\varphi(\cdot-\emph{\textbf{p}}\emph{\textbf{k}}_{j}),
\mathcal{R}_{\emph{\textbf{p}}}\varphi(\cdot-\emph{\textbf{p}}\emph{\textbf{k}}_{n})\rangle=\int_{\mathbb{R}}\mathcal{R}_{\emph{\textbf{p}}}\varphi(x-\emph{\textbf{p}}\emph{\textbf{k}}_{j})
\overline{\mathcal{R}_{\emph{\textbf{p}}}\varphi}(x-\emph{\textbf{p}}\emph{\textbf{k}}_{n})dx.$
Then \eqref{ffttt} or \eqref{ffttt134} is equivalent to
\begin{align} \label{KHJJ} C_{1, \emph{\textbf{p}}}\|(d_{\emph{\textbf{k}}_{1}}, \ldots, d_{\emph{\textbf{k}}_{\#E}})\|^{2}_{2}\leq(d_{\emph{\textbf{k}}_{1}}, \ldots, d_{\emph{\textbf{k}}_{\#E}})G_{\varphi, \emph{\textbf{p}}}
(d_{\emph{\textbf{k}}_{1}}, \ldots, d_{\emph{\textbf{k}}_{\#E}})^{\ast}\leq C_{2, \emph{\textbf{p}}}\|(d_{\emph{\textbf{k}}_{1}}, \ldots, d_{\emph{\textbf{k}}_{\#E}})\|^{2}_{2},\end{align}
where $D^{\ast}$ is the conjugate and transpose of a matrix $D$.
Note that $G_{\varphi, \emph{\textbf{p}}}$ is a Hermitian matrix.
Then \eqref{KHJJ} implies that $G_{\varphi, \emph{\textbf{p}}}$
is a positive definite matrix, and consequently
$0<C_{1, \emph{\textbf{p}}}\leq\lambda_{\min}(G_{\varphi, \emph{\textbf{p}}})$
and $\lambda_{\max}(G_{\varphi, \emph{\textbf{p}}})\leq C_{2,\emph{\textbf{p}}}<\infty$, where
$\lambda_{\max}(G_{\varphi, \emph{\textbf{p}}})>0$ and $\lambda_{\min}(G_{\varphi, \emph{\textbf{p}}})>0$
are the maximum and minimum eigenvalues of $G_{\varphi, \emph{\textbf{p}}}$, respectively.
Particularly, in \eqref{dkkds} one can choose
\begin{align}\label{JKKK} C_{1,\emph{\textbf{p}}}=\lambda_{\min}(G_{\varphi, \emph{\textbf{p}}}).\end{align}
\end{rem}

The following states that if $\varphi\in C^{1}(\mathbb{R}^{2})$ then $\delta_{\emph{\textbf{p}}}$
in the proof of Theorem \ref{yibanshengcyuan} can be chosen explicitly.
Consequently, the SA Radon sampling point set $X_{\emph{\textbf{p}}}$ in
Theorem \ref{yibanshengcyuan} can be constructed explicitly.

\begin{theo}\label{xianshibiaoshi}
Let the compactly supported  generator  $\varphi\in C^{1}(\mathbb{R}^{2})$ such that $\widehat{\varphi}(\textbf{0})\neq0$ and the target function $f\in V(\varphi, \mathbb{Z}^2)$. As in Theorem \ref{yibanshengcyuan}
suppose that   $\hbox{supp}(\varphi) \subseteq[N_{1}, M_{1}]\times [N_{2}, M_{2}]$ and
$\hbox{supp}(f)\subseteq[a_{1}, b_{1}]\times [a_{2}, b_{2}]$. Define two sets
$$ E=\big\{\big[\lceil a_{1}-M_{1}\rceil, \lfloor b_{1}-N_{1}\rfloor\big]\times
\big[\lceil a_{2}-M_{2}\rceil, \lfloor b_{2}-N_{2}\rfloor\big]\big\}\cap \mathbb{Z}^{2}$$
and
\begin{align}E^{+}=\left\{
\begin{array}{cccc}
   \emptyset,&\#E=1,\\
  \{\emph{\textbf{x}}-\emph{\textbf{y}}:
\emph{\textbf{x}}\neq\emph{\textbf{y}}\in E\},&\#E>1.
\end{array}
\right.\end{align}
Choose a   direction vector $\emph{\textbf{p}}\in \{(\cos\theta, \sin\theta): \theta\in [0, 2\pi)\}\setminus\hbox{dv}_{\mathcal{N}_{E^{+}}}$,
and correspondingly  denote
\begin{align}\label{G1}\left\{\begin{array}{lllllll}
L_{\emph{\textbf{p}}, 1}=-\sqrt{2}\max\{|N_{i}|, |M_{i}|: i=1, 2\}+\min\{{\emph{\textbf{p}}}\emph{\textbf{k}}:
\emph{\textbf{k}}\in E\},\\
L_{\emph{\textbf{p}}, 2}=\sqrt{2}\max\{|N_{i}|, |M_{i}|: i=1, 2\}+\max\{{\emph{\textbf{p}}}\emph{\textbf{k}}:
\emph{\textbf{k}}\in E\},\\
\delta_{\emph{\textbf{p}}}=\sqrt{\frac{\lambda_{\min}(G_{\varphi, \emph{\textbf{p}}})}{3\#E(L_{\emph{\textbf{p}},2}-L_{\emph{\textbf{p}},1})}}\Big/\big(2(\|\varphi_{1}\|_{\infty}+\|\varphi_{2}\|_{\infty})\max\{|N_{i}|, |M_{i}|: i=1,2\}\big),\\
K_{\emph{\textbf{p}}}=\lceil\frac{L_{p,2}-L_{p,1}}{\delta_{\emph{\textbf{p}}}}\rceil,
\end{array}\right.
\end{align}
where $\lambda_{\min}(G_{\varphi, \emph{\textbf{p}}})$ is the minimum eigenvalue  of the Gram matrix $G_{\varphi, \emph{\textbf{p}}}$
defined in \eqref{Grammatrix},
$\varphi_{1}(x_{1}, x_{2})$
and $\varphi_{2}(x_{1}, x_{2})$ are the partial derivatives of $\varphi(x_{1}, x_{2})$ w.r.t the variables  $x_{1}$ and $x_{2}$, respectively
such that $$\|\varphi_{1}\|_{\infty}=\max_{(x_{1}, x_{2})\in [N_{1}, M_{1}]\times [N_{2}, M_{2}]}|\varphi_{1}(x_{1}, x_{2})|,
\|\varphi_{2}\|_{\infty}=\max_{(x_{1}, x_{2})\in [N_{1}, M_{1}]\times [N_{2}, M_{2}]}|\varphi_{2}(x_{1}, x_{2})|.$$
Explicitly construct \begin{align}\label{G2}Y_{\emph{\textbf{p}}}=\{x_{k}=L_{\emph{\textbf{p}},1}+\frac{L_{\emph{\textbf{p}},2}-L_{\emph{\textbf{p}},1}}{K_{\emph{\textbf{p}}}}(k-1): k=1, \ldots, K_{\emph{\textbf{p}}}+1\}.\end{align} Then there exists $X_{\emph{\textbf{p}}}=\{x_{i_{1}}, \ldots, x_{i_{\#E}}\}\subseteq
Y_{\emph{\textbf{p}}}$ such that the matrix $A_{\varphi, \emph{\textbf{p}},X_{\emph{\textbf{p}}}}$ in \eqref{fgvcb}
is invertible and consequently, $f$ can be determined uniquely by its SA Radon samples at $X_{\emph{\textbf{p}}}$.
\end{theo}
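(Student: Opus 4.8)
The plan is to read off Theorem~\ref{xianshibiaoshi} from the proof of Theorem~\ref{yibanshengcyuan} once the modulus of continuity of $\mathcal{R}_{\emph{\textbf{p}}}\varphi$ is made explicit. Since $\varphi\in C^{1}(\mathbb{R}^{2})$ is compactly supported, $\varphi$ and its partial derivatives $\varphi_{1},\varphi_{2}$ are bounded with compact support, hence lie in $L^{2}(\mathbb{R}^{2})$; thus $\varphi\in H^{1}(\mathbb{R}^{2})$, so $\nu_{2}(\varphi)\geq1>1/2$ and every hypothesis of Theorem~\ref{yibanshengcyuan} holds (the standing linear independence of $\{\varphi(\cdot-\emph{\textbf{k}})\}$, the non-vanishing $\widehat{\varphi}(\textbf{0})\neq0$, and the Sobolev condition). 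First I would invoke, for the chosen $\emph{\textbf{p}}\in\{(\cos\theta,\sin\theta):\theta\in[0,2\pi)\}\setminus\hbox{dv}_{\mathcal{N}_{E^{+}}}$ (a nonempty set since $\#\hbox{dv}_{\mathcal{N}_{E^{+}}}<\infty$), everything the proof of Theorem~\ref{yibanshengcyuan} establishes before its choice of $\delta_{\emph{\textbf{p}}}$: by Proposition~\ref{Remark3.2} the system $\{\mathcal{R}_{\emph{\textbf{p}}}\varphi(\cdot-\emph{\textbf{p}}\emph{\textbf{k}}_{l}):l=1,\ldots,\#E\}$ is linearly independent in $L^{2}(\mathbb{R})$; by Remark~\ref{remmak1234} its Gram matrix $G_{\varphi,\emph{\textbf{p}}}$ is positive definite and \eqref{ffttt134} holds with $C_{1,\emph{\textbf{p}}}=\lambda_{\min}(G_{\varphi,\emph{\textbf{p}}})>0$; by Lemma~\ref{pouy} the functions $\mathcal{R}_{\emph{\textbf{p}}}\varphi(\cdot-\emph{\textbf{p}}\emph{\textbf{k}}_{l})$ are supported in $[L_{\emph{\textbf{p}},1},L_{\emph{\textbf{p}},2}]$; and by Proposition~\ref{lianxuxing}(2), or directly from the estimate below, $\mathcal{R}_{\emph{\textbf{p}}}\varphi$ is continuous.

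The only new ingredient is an explicit Lipschitz estimate for $\mathcal{R}_{\emph{\textbf{p}}}\varphi$. Writing $\emph{\textbf{p}}=\Sigma V^{T}$ with $V$ real unitary, formulas \eqref{HVCXZ}, \eqref{cvb} and Remark~\ref{JHR} give $\mathcal{R}_{\emph{\textbf{p}}}\varphi(t)=\int_{\mathbb{R}}\varphi\big(V(t,s)^{T}\big)\,ds$. For $t',t''\in\mathbb{R}$ and fixed $s$, the segment from $V(t'',s)^{T}$ to $V(t',s)^{T}$ points along the unit vector $V(1,0)^{T}$ and has length $|t'-t''|$; applying the mean value theorem along it and using that each coordinate of a unit vector is $\leq1$ in modulus yields $|\varphi(V(t',s)^{T})-\varphi(V(t'',s)^{T})|\leq(\|\varphi_{1}\|_{\infty}+\|\varphi_{2}\|_{\infty})\,|t'-t''|$. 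Since $\varphi$ vanishes off $[N_{1},M_{1}]\times[N_{2},M_{2}]$, for each $t$ the integrand is supported in an $s$-interval whose length is controlled explicitly by $\max\{|N_{i}|,|M_{i}|:i=1,2\}$ (estimate the chord of the support box cut out by the line, using that $V^{T}$ is unitary), so integrating in $s$ gives $|\mathcal{R}_{\emph{\textbf{p}}}\varphi(t')-\mathcal{R}_{\emph{\textbf{p}}}\varphi(t'')|\leq 2(\|\varphi_{1}\|_{\infty}+\|\varphi_{2}\|_{\infty})\max\{|N_{i}|,|M_{i}|:i=1,2\}\,|t'-t''|$; that is, $\mathcal{R}_{\emph{\textbf{p}}}\varphi$ is Lipschitz with exactly the constant appearing in the denominator of $\delta_{\emph{\textbf{p}}}$ in \eqref{G1}. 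Consequently, whenever $|t'-t''|\leq\delta_{\emph{\textbf{p}}}$ the inequality \eqref{dkkds} holds with $C_{1,\emph{\textbf{p}}}=\lambda_{\min}(G_{\varphi,\emph{\textbf{p}}})$.

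From here the argument is identical to that of Theorem~\ref{yibanshengcyuan}. The uniform grid $Y_{\emph{\textbf{p}}}$ in \eqref{G2} has consecutive spacing $(L_{\emph{\textbf{p}},2}-L_{\emph{\textbf{p}},1})/K_{\emph{\textbf{p}}}\leq\delta_{\emph{\textbf{p}}}$ because $K_{\emph{\textbf{p}}}=\lceil(L_{\emph{\textbf{p}},2}-L_{\emph{\textbf{p}},1})/\delta_{\emph{\textbf{p}}}\rceil$, so \eqref{dkkds} applies across each subinterval. Forming the piecewise-constant approximations $h_{l}$ to $\mathcal{R}_{\emph{\textbf{p}}}\varphi(\cdot-\emph{\textbf{p}}\emph{\textbf{k}}_{l})$ over $Y_{\emph{\textbf{p}}}$ and running the chain \eqref{cxznew}--\eqref{hbcx} verbatim yields, for every nonzero coefficient vector $(d_{\emph{\textbf{k}}_{1}},\ldots,d_{\emph{\textbf{k}}_{\#E}})$,
\begin{align*}
\sum_{j=1}^{K_{\emph{\textbf{p}}}}\Big|\sum_{l=1}^{\#E}d_{\emph{\textbf{k}}_{l}}\mathcal{R}_{\emph{\textbf{p}}}\varphi(x_{j}-\emph{\textbf{p}}\emph{\textbf{k}}_{l})\Big|^{2}\geq\lambda_{\min}(G_{\varphi,\emph{\textbf{p}}})\big(1-\sqrt{1/3}\big)^{2}\sum_{l=1}^{\#E}|d_{\emph{\textbf{k}}_{l}}|^{2}>0,
\end{align*}
so the $K_{\emph{\textbf{p}}}\times\#E$ sampling matrix of $\mathcal{R}_{\emph{\textbf{p}}}\varphi$ over $Y_{\emph{\textbf{p}}}$ has $\#E$ linearly independent columns; selecting $\#E$ of its rows that form an invertible block singles out a subset $X_{\emph{\textbf{p}}}=\{x_{i_{1}},\ldots,x_{i_{\#E}}\}\subseteq Y_{\emph{\textbf{p}}}$ for which the matrix $A_{\varphi,\emph{\textbf{p}},X_{\emph{\textbf{p}}}}$ of \eqref{fgvcb} is invertible. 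Theorem~\ref{theorem123}, applicable since $\mathcal{R}_{\emph{\textbf{p}}}\varphi$ is continuous, then shows that $f$ is determined uniquely by its single-angle Radon samples at $X_{\emph{\textbf{p}}}$, its coefficient vector being recovered via $A_{\varphi,\emph{\textbf{p}},X_{\emph{\textbf{p}}}}^{-1}$ as in \eqref{huiyong}. The only genuinely new work is the Lipschitz estimate of the middle paragraph; I expect the mildly fiddly—but not deep—point to be bookkeeping the support-box constant so that it reproduces the denominator of $\delta_{\emph{\textbf{p}}}$ in \eqref{G1} exactly, everything else being a quotation of Theorem~\ref{yibanshengcyuan} or a routine maximal-rank submatrix extraction.
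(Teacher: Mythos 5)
Your proposal is correct and follows essentially the same route as the paper: reduce everything to verifying the explicit modulus-of-continuity bound \eqref{dkkds} with $C_{1,\emph{\textbf{p}}}=\lambda_{\min}(G_{\varphi, \emph{\textbf{p}}})$ (via Remark \ref{remmak1234}), prove a Lipschitz estimate for $\mathcal{R}_{\emph{\textbf{p}}}\varphi$ by the mean value theorem applied under the $s$-integral restricted to the support box, and then rerun \eqref{cxznew}--\eqref{hbcx} to extract an invertible $\#E\times\#E$ submatrix and conclude via Theorem \ref{theorem123} and \eqref{huiyong}. Even your Lipschitz constant $2(\|\varphi_{1}\|_{\infty}+\|\varphi_{2}\|_{\infty})\max\{|N_{i}|, |M_{i}|: i=1,2\}$ matches the one in \eqref{hxzc} (including the same slightly optimistic bookkeeping of the support-box chord length), so the argument is the paper's proof with the preliminary observation $\nu_{2}(\varphi)\geq 1$ made explicit.
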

\begin{proof}
By Remark \ref{remmak1234} \eqref{JKKK},  $C_{1, \emph{\textbf{p}}}$ in   \eqref{KHJJ} can be  chosen as
 $\lambda_{\min}(G_{\varphi, \emph{\textbf{p}}})$. If \eqref{dkkds} holds with  $C_{1, \emph{\textbf{p}}}$ replaced by
 $\lambda_{\min}(G_{\varphi, \emph{\textbf{p}}})$,
then  by the similar procedures (\eqref{cxznew}-\eqref{hbcx}) in the proof of Theorem \ref{yibanshengcyuan}
 one can prove that there exists $X_{\emph{\textbf{p}}}=\{x_{i_{1}}, \ldots, x_{i_{\#E}}\}\subseteq
Y_{\emph{\textbf{p}}}$ such that  $A_{\varphi, \emph{\textbf{p}},X_{\emph{\textbf{p}}}}$ in \eqref{fgvcb}
is invertible. Consequently, $f$ can be determined by \eqref{huiyong}. Therefore, we just need to prove that
\eqref{dkkds} holds.

The SVD of $\emph{\textbf{p}}=(\cos\theta, \sin\theta)$
is $\Sigma V^{T}$ such that
$V=\left(\begin{array}{cccccccccc}
\cos\theta&\sin\theta \\
\sin\theta&-\cos\theta
\end{array}\right)$ and $\Sigma=(1, 0)$.
Since $\varphi\in C^{1}(\mathbb{R}^2)$ is compactly supported,   we have
\begin{align}\label{hxzc}\begin{array}{lllllllll}
|\mathcal{R}_{\emph{\textbf{p}}}\varphi\big(x^{'}-\emph{\textbf{p}}\emph{\textbf{k}}_{l})-\mathcal{R}_{\emph{\textbf{p}}}\varphi\big(x^{''}-\emph{\textbf{p}}\emph{\textbf{k}}_{l}\big)|\\
=\displaystyle\Big|\int_{\mathbb{R}}\varphi\big((x^{'}-\emph{\textbf{p}}\emph{\textbf{k}}_{l})\cos\theta+x_{2}\sin\theta, (x^{'}-\emph{\textbf{p}}\emph{\textbf{k}}_{l})\sin\theta-x_{2}\cos\theta\big) \\
\quad\quad\quad\quad\quad\quad -\varphi((x^{''}-\emph{\textbf{p}}\emph{\textbf{k}}_{l})\cos\theta+x_{2}\sin\theta, (x^{''}-\emph{\textbf{p}}\emph{\textbf{k}}_{l})\sin\theta-x_{2}\cos\theta)dx_{2}\Big| \quad  (\ref{hxzc} A)\\
=\displaystyle\Big|\int^{\max\{|N_{i}|, |M_{i}|: i=1,2\}}_{-\max\{|N_{i}|, |M_{i}|: i=1,2\}}\varphi\big((x^{'}-\emph{\textbf{p}}\emph{\textbf{k}}_{l})\cos\theta+x_{2}\sin\theta, (x^{'}-\emph{\textbf{p}}\emph{\textbf{k}}_{l})\sin\theta-x_{2}\cos\theta\big)\\
\quad\quad\quad\quad\quad\quad\quad-\varphi\big((x^{''}-\emph{\textbf{p}}\emph{\textbf{k}}_{l})\cos\theta+x_{2}\sin\theta, (x^{''}-\emph{\textbf{p}}\emph{\textbf{k}}_{l})\sin\theta-x_{2}\cos\theta\big)dx_{2}\Big|\   (\ref{hxzc} B)\\
\displaystyle\leq|x'-x''|\int^{\max\{|N_{i}|, |M_{i}|: i=1,2\}}_{-\max\{|N_{i}|, |M_{i}|: i=1,2\}}(\|\varphi_{1}\|_{\infty}+\|\varphi_{2}\|_{\infty})dx_{2} \quad (\ref{hxzc} C)\\
\displaystyle=2|x'-x''|(\|\varphi_{1}\|_{\infty}+\|\varphi_{2}\|_{\infty})\max\{|N_{i}|, |M_{i}|: i=1,2\},
\end{array}\end{align}
where the (\ref{hxzc}$A$) and (\ref{hxzc}$B$) are derived from from (\ref{en123456}$A$) and \eqref{uytt}, respectively,
and (\ref{hxzc}$C$) is from the differential mean value theorem.
It is required that $|x'-x''|\leq\delta_{\emph{\textbf{p}}}.$
Then it follows from \eqref{hxzc} that \begin{align} \label{kezhao} |\mathcal{R}_{\emph{\textbf{p}}}\varphi\big(x^{'}-\emph{\textbf{p}}\emph{\textbf{k}}_{l})-\mathcal{R}_{\emph{\textbf{p}}}\varphi\big(x^{''}-\emph{\textbf{p}}\emph{\textbf{k}}_{l}\big)|\leq
2\delta_{\emph{\textbf{p}}}(\|\varphi_{1}\|_{\infty}+\|\varphi_{2}\|_{\infty})\max\{|N_{i}|, |M_{i}|: i=1,2\}.
\end{align}
Now by \eqref{kezhao} we can choose
\begin{align}\notag \delta_{\emph{\textbf{p}}}=\sqrt{\frac{\lambda_{\min}(G_{\varphi, \emph{\textbf{p}}})}{3\#E(L_{\emph{\textbf{p}},2}-L_{\emph{\textbf{p}},1})}}\Big/\big(2(\|\varphi_{1}\|_{\infty}+\|\varphi_{2}\|_{\infty})\max\{|N_{i}|, |M_{i}|: i=1,2\}\big)\end{align}
 such that \eqref{dkkds} holds   with  $C_{1, \emph{\textbf{p}}}$ replaced by
 $\lambda_{\min}(G_{\varphi, \emph{\textbf{p}}})$. The proof is completed.
\end{proof}


\subsection{The second main result: SACT sampling for compactly supported functions in a SIS generated by a vanishing generator $\varphi$}\label{2ndresult}

In this subsection suppose that the generator $\varphi$ is  vanishing, namely,  $\widehat{\varphi}(\textbf{0})=\int_{\mathbb{R}^{2}}\varphi(\emph{\textbf{x}})d\emph{\textbf{x}}=0.$



\begin{theo}\label{yibanshengcyuanYY}
Suppose that $\varphi\in L^2(\mathbb{R}^{2})$ is compactly supported such that  $\hbox{supp}(\varphi)$
$\subseteq[N_{1}, M_{1}]\times [N_{2}, M_{2}]$,
the system $\{\varphi(\cdot-\emph{\textbf{k}}): \emph{\textbf{k}}\in \mathbb{Z}^{2}\}$
is linearly independent, and

(i) the  Sobolev smoothness  $\nu_{2}(\varphi)>1/2$,

(ii)  $\widehat{\varphi}(\textbf{0})=
\int_{\mathbb{R}^{2}}\varphi(\emph{\textbf{x}})d\emph{\textbf{x}}=0$ (vanishing property). \\
Moreover, as previously  suppose that $f\in V(\varphi, \mathbb{Z}^2)$ is an arbitrary target function
such that $\hbox{supp}(f)\subseteq[a_{1}, b_{1}]\times [a_{2}, b_{2}]$.
Define
$$ E=\big\{\big[\lceil a_{1}-M_{1}\rceil, \lfloor b_{1}-N_{1}\rfloor\big]\times
\big[\lceil a_{2}-M_{2}\rceil, \lfloor b_{2}-N_{2}\rfloor\big]\big\}\cap \mathbb{Z}^{2}$$
and
\begin{align}E^{+}=\left\{
\begin{array}{cccc}
   \emptyset,&\#E=1,\\
  \{\emph{\textbf{x}}-\emph{\textbf{y}}:
\emph{\textbf{x}}\neq\emph{\textbf{y}}\in E\},&\#E>1.
\end{array}
\right.\end{align}
Then for any direction vector $\emph{\textbf{p}}\in \hbox{DV}_{\widehat{\varphi}}\setminus\hbox{dv}_{\mathcal{N}_{E^{+}}}$,
there exists a sampling set $X_{\emph{\textbf{p}}}\subseteq \mathbb{R}$ having the cardinality $\#X_{\emph{\textbf{p}}}=\#E$ such that
$f$ can be determined uniquely by its SA Radon (w.r.t $\emph{\textbf{p}}$) samples at $X_{\emph{\textbf{p}}}$,
where $\hbox{DV}_{\widehat{\varphi}}$ and $\hbox{dv}_{\mathcal{N}_{E^{+}}}$ are defined via  Definitions \ref{flagvector} and
\ref{crictial}.
\end{theo}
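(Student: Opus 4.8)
The plan is to mirror, step for step, the proof of Theorem \ref{yibanshengcyuan}, changing only the argument that produces $\widehat{\mathcal{R}_{\emph{\textbf{p}}}\varphi}=\widehat{\varphi}(\emph{\textbf{p}}^{T}\cdot)\not\equiv0$: in the non-vanishing case this was the trivial remark $\widehat{\mathcal{R}_{\emph{\textbf{p}}}\varphi}(0)=\widehat{\varphi}(\textbf{0})\neq0$, whereas here it must be extracted from the membership $\emph{\textbf{p}}\in\hbox{DV}_{\widehat{\varphi}}$. Writing $E=\{\emph{\textbf{k}}_{1},\ldots,\emph{\textbf{k}}_{\#E}\}$ and $f=\sum_{l=1}^{\#E}c_{\emph{\textbf{k}}_{l}}\varphi(\cdot-\emph{\textbf{k}}_{l})$, formula \eqref{Y00K} again gives $\mathcal{R}_{\emph{\textbf{p}}}f=\sum_{l=1}^{\#E}c_{\emph{\textbf{k}}_{l}}\mathcal{R}_{\emph{\textbf{p}}}\varphi(\cdot-\emph{\textbf{p}}\emph{\textbf{k}}_{l})$, so the task reduces, via Theorem \ref{theorem123}, to producing $X_{\emph{\textbf{p}}}$ with $\#X_{\emph{\textbf{p}}}=\#E$ for which the matrix $A_{\varphi,\emph{\textbf{p}},X_{\emph{\textbf{p}}}}$ in \eqref{fgvcb} is invertible.

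First I would check that the eligible set $\hbox{DV}_{\widehat{\varphi}}\setminus\hbox{dv}_{\mathcal{N}_{E^{+}}}$ is nonempty: Proposition \ref{lebesguecedu}(1) gives $\mu(\arg_{\hbox{DV}_{\widehat{\varphi}}})>0$, so $\hbox{DV}_{\widehat{\varphi}}$ is infinite, while $\hbox{dv}_{\mathcal{N}_{E^{+}}}$ is finite (empty when $\#E=1$, and when $\#E>1$ one has $\#E^{+}=\#E(\#E-1)<\infty$, hence $\#\hbox{dv}_{\mathcal{N}_{E^{+}}}<\infty$ by Remark \ref{guanyucedu}(2)). Next, fixing $\emph{\textbf{p}}$ in this set, I would verify condition (3) of Proposition \ref{Remark3.2}: Proposition \ref{lebesguecedu}(2) applied to $\emph{\textbf{p}}\in\hbox{DV}_{\widehat{\varphi}}$ gives $\widehat{\mathcal{R}_{\emph{\textbf{p}}}\varphi}=\widehat{\varphi}(\emph{\textbf{p}}^{T}\cdot)\not\equiv0$, and $\emph{\textbf{p}}\notin\hbox{dv}_{\mathcal{N}_{E^{+}}}$ forces $\emph{\textbf{p}}\emph{\textbf{k}}_{j}\neq\emph{\textbf{p}}\emph{\textbf{k}}_{n}$ for $j\neq n$ by Definition \ref{crictial}(2). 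Hence $\{\mathcal{R}_{\emph{\textbf{p}}}\varphi(\cdot-\emph{\textbf{p}}\emph{\textbf{k}}_{l}):l=1,\ldots,\#E\}$ is linearly independent in $L^{2}(\mathbb{R})$ and obeys lower/upper frame bounds $0<C_{1,\emph{\textbf{p}}}\leq C_{2,\emph{\textbf{p}}}<\infty$ as in \eqref{ffttt}.

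From here the argument is word-for-word that of Theorem \ref{yibanshengcyuan}. Since $\nu_{2}(\varphi)>1/2$, Proposition \ref{lianxuxing}(2) makes $\mathcal{R}_{\emph{\textbf{p}}}\varphi$ continuous, hence uniformly continuous on the compact support interval $[L_{\emph{\textbf{p}},1},L_{\emph{\textbf{p}},2}]$; I would pick $\delta_{\emph{\textbf{p}}}$ realizing the modulus-of-continuity bound \eqref{dkkds}, build the equispaced grid $Y_{\emph{\textbf{p}}}$ of mesh $\leq\delta_{\emph{\textbf{p}}}$, approximate each $\mathcal{R}_{\emph{\textbf{p}}}\varphi(\cdot-\emph{\textbf{p}}\emph{\textbf{k}}_{l})$ by the step function $h_{l}$, and run the estimates \eqref{cxznew}--\eqref{hbcx} to conclude $\sum_{j}|\sum_{l}d_{\emph{\textbf{k}}_{l}}\mathcal{R}_{\emph{\textbf{p}}}\varphi(x_{j}-\emph{\textbf{p}}\emph{\textbf{k}}_{l})|^{2}\geq C_{1,\emph{\textbf{p}}}(1-\sqrt{1/3})^{2}\sum_{l}|d_{\emph{\textbf{k}}_{l}}|^{2}$, which is positive for every nonzero coefficient vector. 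This shows the $(\#E)$-dimensional row vectors indexed by $x_{j}\in Y_{\emph{\textbf{p}}}$ span $\mathbb{C}^{\#E}$, so some $X_{\emph{\textbf{p}}}=\{x_{j_{1}},\ldots,x_{j_{\#E}}\}\subseteq Y_{\emph{\textbf{p}}}$ makes $A_{\varphi,\emph{\textbf{p}},X_{\emph{\textbf{p}}}}$ invertible, and Theorem \ref{theorem123} then recovers $f$ uniquely through \eqref{huiyong}.

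The hard part is really Step 2, and within it the single assertion that $\widehat{\varphi}(\emph{\textbf{p}}^{T}\cdot)\not\equiv0$ when $\emph{\textbf{p}}\in\hbox{DV}_{\widehat{\varphi}}$ --- this is not automatic for a vanishing generator, and it is exactly what Proposition \ref{lebesguecedu}(2) supplies, leaning on the real-analyticity of $\widehat{\varphi}$ (entire of exponential type, by Paley--Wiener) so that its nonzero set contains a disc whose direction-vector set has positive angular measure. Everything downstream is a mechanical transcription of the non-vanishing proof, so I would present it by reference to Theorem \ref{yibanshengcyuan} rather than recompute the estimates.
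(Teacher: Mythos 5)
Your proposal is correct and follows essentially the same route as the paper's own proof: nonemptiness of $\hbox{DV}_{\widehat{\varphi}}\setminus\hbox{dv}_{\mathcal{N}_{E^{+}}}$ from Proposition \ref{lebesguecedu}(1) together with the finiteness of $\hbox{dv}_{\mathcal{N}_{E^{+}}}$, the key substitution of Proposition \ref{lebesguecedu}(2) for the trivial nonvanishing argument to get $\widehat{\mathcal{R}_{\emph{\textbf{p}}}\varphi}\not\equiv0$, and then a verbatim repetition of the uniform-continuity and step-function estimates of Theorem \ref{yibanshengcyuan} to obtain an invertible $A_{\varphi,\emph{\textbf{p}},X_{\emph{\textbf{p}}}}$ and conclude via Theorem \ref{theorem123}. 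No gaps.
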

\begin{proof}
Denote $E$ by $\{\emph{\textbf{k}}_{1}, \ldots, \emph{\textbf{k}}_{\#E}\}$.
It has been proved in the proof of Theorem \ref{yibanshengcyuan} that $\#\hbox{dv}_{\mathcal{N}_{E^{+}}}<\infty$.
Now by  Proposition  \ref{lebesguecedu} (1)  one can prove that $\hbox{DV}_{\widehat{\varphi}}\setminus\hbox{dv}_{\mathcal{N}_{E^{+}}}$
is not empty.
It follows from Proposition   \ref{lebesguecedu} (2) that $\widehat{\mathcal{R}_{\emph{\textbf{p}}}\varphi}\not\equiv0 $  for any direction vector $\emph{\textbf{p}}\in \hbox{DV}_{\widehat{\varphi}}$.
Moreover, as in the proof of Theorem \ref{yibanshengcyuan}   one can prove  that
for any direction vector $\emph{\textbf{p}}  \in \hbox{DV}_{\widehat{\varphi}}\setminus\hbox{dv}_{\mathcal{N}_{E^{+}}}$
the system $\{\mathcal{R}_{\emph{\textbf{p}}}\varphi(\cdot-\emph{\textbf{p}}\emph{\textbf{k}}_{l}): l=1, \ldots, \#E\}$
is linearly independent. Through the similar procedures of the proof of Theorem \ref{yibanshengcyuan}, one can prove there exists  a
sampling set $X_{\emph{\textbf{p}}}$ such that $\#X_{\emph{\textbf{p}}}=\#E$ and
$f$ can be determined uniquely by its SA Radon  samples at $X_{\emph{\textbf{p}}}$.
\end{proof}

If $\varphi\in C^{1}(\mathbb{R}^{2})$, by the similar proof of
Theorem \ref{xianshibiaoshi} one can prove the following result.

\begin{prop}\label{hbvcxz}
If $\varphi\in C^{1}(\mathbb{R}^{2})$, then for $\emph{\textbf{p}}\in \hbox{DV}_{\widehat{\varphi}}\setminus\hbox{dv}_{\mathcal{N}_{E^{+}}}$
the sampling point set $X_{\emph{\textbf{p}}}$ in Theorem \ref{yibanshengcyuanYY}  can be constructed explicitly  through the similar procedures in
Theorem \ref{xianshibiaoshi} \eqref{G1} and \eqref{G2}.
\end{prop}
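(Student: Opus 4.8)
The plan is to combine the two ingredients already at hand: the restriction $\emph{\textbf{p}}\in\hbox{DV}_{\widehat{\varphi}}\setminus\hbox{dv}_{\mathcal{N}_{E^{+}}}$ from Theorem \ref{yibanshengcyuanYY}, which in the vanishing case plays the role that $\widehat{\varphi}(\textbf{0})\neq0$ plays in the nonvanishing setting, and the explicit modulus-of-continuity estimate of Theorem \ref{xianshibiaoshi}, which is where the $C^{1}$ hypothesis enters. First I would record that for a compactly supported $\varphi\in C^{1}(\mathbb{R}^{2})$ the partial derivatives $\varphi_{1},\varphi_{2}$ are continuous with compact support, hence bounded and in $L^{2}$, so $\varphi\in H^{1}(\mathbb{R}^{2})$ and in particular $\nu_{2}(\varphi)\geq1>1/2$; thus the hypotheses of Theorem \ref{yibanshengcyuanYY} are in force and $\mathcal{R}_{\emph{\textbf{p}}}\varphi$ is continuous by Proposition \ref{lianxuxing}(2). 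Next, exactly as in the proof of Theorem \ref{yibanshengcyuanYY}, for $\emph{\textbf{p}}\in\hbox{DV}_{\widehat{\varphi}}\setminus\hbox{dv}_{\mathcal{N}_{E^{+}}}$ — a nonempty set, since $\#\hbox{dv}_{\mathcal{N}_{E^{+}}}<\infty$ by Proposition \ref{guanyucedu}(2) while the angle set of $\hbox{DV}_{\widehat{\varphi}}$ has positive Lebesgue measure by Proposition \ref{lebesguecedu}(1) — one has $\widehat{\mathcal{R}_{\emph{\textbf{p}}}\varphi}=\widehat{\varphi}(\emph{\textbf{p}}^{T}\cdot)\not\equiv0$ by Proposition \ref{lebesguecedu}(2) and $\emph{\textbf{p}}\emph{\textbf{k}}_{j}\neq\emph{\textbf{p}}\emph{\textbf{k}}_{n}$ for $j\neq n$ by Definition \ref{crictial}(2); hence Proposition \ref{Remark3.2}(3) holds and $\{\mathcal{R}_{\emph{\textbf{p}}}\varphi(\cdot-\emph{\textbf{p}}\emph{\textbf{k}}_{l}):l=1,\ldots,\#E\}$ is linearly independent in $L^{2}(\mathbb{R})$, so the Gram matrix $G_{\varphi,\emph{\textbf{p}}}$ of \eqref{Grammatrix} is positive definite and $\lambda_{\min}(G_{\varphi,\emph{\textbf{p}}})>0$.

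Then I would run the explicit construction of Theorem \ref{xianshibiaoshi} verbatim. Using the Radon representation (\ref{en123456}A), the support bound \eqref{uytt}, and the differential mean value theorem, one gets for $|x'-x''|\leq\delta_{\emph{\textbf{p}}}$
\begin{align}\notag
|\mathcal{R}_{\emph{\textbf{p}}}\varphi(x'-\emph{\textbf{p}}\emph{\textbf{k}}_{l})-\mathcal{R}_{\emph{\textbf{p}}}\varphi(x''-\emph{\textbf{p}}\emph{\textbf{k}}_{l})|\leq 2\delta_{\emph{\textbf{p}}}(\|\varphi_{1}\|_{\infty}+\|\varphi_{2}\|_{\infty})\max\{|N_{i}|,|M_{i}|:i=1,2\},
\end{align}
which is \eqref{kezhao}. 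Defining $L_{\emph{\textbf{p}},1},L_{\emph{\textbf{p}},2},\delta_{\emph{\textbf{p}}},K_{\emph{\textbf{p}}}$ by \eqref{G1} — legitimately taking $C_{1,\emph{\textbf{p}}}=\lambda_{\min}(G_{\varphi,\emph{\textbf{p}}})$ by Remark \ref{remmak1234} — and $Y_{\emph{\textbf{p}}}$ by \eqref{G2}, this choice of $\delta_{\emph{\textbf{p}}}$ forces \eqref{dkkds}. The chain of estimates \eqref{cxznew}-\eqref{hbcx} from the proof of Theorem \ref{yibanshengcyuan} then goes through unchanged and yields a subset $X_{\emph{\textbf{p}}}=\{x_{i_{1}},\ldots,x_{i_{\#E}}\}\subseteq Y_{\emph{\textbf{p}}}$ for which the matrix $A_{\varphi,\emph{\textbf{p}},X_{\emph{\textbf{p}}}}$ of \eqref{fgvcb} is invertible; by Theorem \ref{theorem123}, $f$ is determined uniquely by its SA Radon samples at $X_{\emph{\textbf{p}}}$ via \eqref{huiyong}.

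I do not expect a genuine obstacle here. The only place the vanishing property bites is the linear-independence step for the shifted Radon system, and that is already handled by confining $\emph{\textbf{p}}$ to $\hbox{DV}_{\widehat{\varphi}}$ through Proposition \ref{lebesguecedu}(2); everything downstream — the $C^{1}$ Lipschitz bound on $\mathcal{R}_{\emph{\textbf{p}}}\varphi$, the uniform quantization of $[L_{\emph{\textbf{p}},1},L_{\emph{\textbf{p}},2}]$, and the selection of $\#E$ nodes making the sampling matrix nonsingular — is completely insensitive to whether $\widehat{\varphi}(\textbf{0})$ vanishes. The one point requiring a line of care is confirming that $\nu_{2}(\varphi)>1/2$ is automatic for compactly supported $\varphi\in C^{1}(\mathbb{R}^{2})$, so that Theorem \ref{yibanshengcyuanYY} applies; the remainder is bookkeeping.
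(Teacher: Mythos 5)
Your proposal is correct and follows exactly the route the paper intends: the paper offers no separate argument for Proposition \ref{hbvcxz} beyond asserting that the construction of Theorem \ref{xianshibiaoshi} (\eqref{G1}, \eqref{G2}, the $C^{1}$ Lipschitz bound \eqref{kezhao}, and the estimates \eqref{cxznew}--\eqref{hbcx}) carries over, with the linear independence of $\{\mathcal{R}_{\emph{\textbf{p}}}\varphi(\cdot-\emph{\textbf{p}}\emph{\textbf{k}}_{l})\}$ supplied for the vanishing case by restricting $\emph{\textbf{p}}$ to $\hbox{DV}_{\widehat{\varphi}}\setminus\hbox{dv}_{\mathcal{N}_{E^{+}}}$ via Proposition \ref{lebesguecedu}, which is precisely what you do. Your added observation that compact support plus $\varphi\in C^{1}(\mathbb{R}^{2})$ forces $\nu_{2}(\varphi)\geq1>1/2$ is a harmless (and welcome) extra detail not spelled out in the paper.
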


\subsection{A remark from the perspective of smoothness}
For the sampling-based  recovery in the  SIS $V(\varphi, \mathbb{Z}^2)$, by Aldroubi and   Gr\"{o}chenig \cite{Fienup289} it is
required that $\varphi$ is continuous. That is, if $\varphi$ is discontinuous then the sampling in
$V(\varphi, \mathbb{Z}^2)$ is not well-defined. However, it follows from Remark \ref{kenenglianxu} that
$\mathcal{R}_{\emph{\textbf{p}}}\varphi$ may be continuous even though $\varphi$ is discontinuous.
In this sense, our SA Radon sample-based reconstruction   provides a new perspective for the reconstruction of
functions in $V(\varphi, \mathbb{Z}^2)$  where $\varphi$ is discontinuous.

\section{Pairs of   $(\varphi, \emph{\textbf{p}})$ such that  the corresponding  SACT can be achieved  by the sampling set $\{\emph{\textbf{p}}\emph{\textbf{k}}_{1}, \ldots, \emph{\textbf{p}}\emph{\textbf{k}}_{\#E}\}$} \label{moreq23}
\subsection{Motivation }
If the Sobolev smoothness $\nu_{2}(\varphi)>1/2$, it has been proved in  Theorems  \ref{yibanshengcyuan}
  and \ref{yibanshengcyuanYY} that there exists a sampling set $X_{\emph{\textbf{p}}}$ such that the target function  $f\in V(\varphi, \mathbb{Z}^2)$
can be determined uniquely by its SA samples at $X_{\emph{\textbf{p}}}.$ Moreover,
if $\varphi\in C^{1}(\mathbb{R}^2)$  then it is stated in  Theorem \ref{xianshibiaoshi} and Proposition \ref{hbvcxz}
 that $X_{\emph{\textbf{p}}}$ can be constructed explicitly. On the other hand,  however,  it follows from Remark \ref{continuous123}
that $\nu_{2}(\varphi)>1/2$ does not necessarily imply that $\varphi\in C^{1}(\mathbb{R}^2)$.
Then a natural problem is, without the $C^{1}$ condition how can one explicitly construct   the
the sampling set $X_{\emph{\textbf{p}}}.$ Before introducing our scheme, let us
recall   \eqref{9xy0876} as \begin{align}\label{mubiaohanshu}\mathcal{R}_{\emph{\textbf{p}}}f=\sum^{\#E}_{l=1}c_{\emph{\textbf{k}}_{l}}\mathcal{R}_{\emph{\textbf{p}}}\varphi(\cdot-\emph{\textbf{p}}\emph{\textbf{k}}_{l}).\end{align} Note that $X_{\emph{\textbf{p}}}$ in Theorem \ref{xianshibiaoshi} and Proposition   \ref{hbvcxz}
is not necessarily  $\{\emph{\textbf{p}}\emph{\textbf{k}}_{1}, \ldots, \emph{\textbf{p}}\emph{\textbf{k}}_{\#E}\}$.
Naturally, one asks:

under what condition  on the pair  $(\varphi, \emph{\textbf{p}})$,  can $f$  be determined uniquely by its  SA samples at
$\{\emph{\textbf{p}}\emph{\textbf{k}}_{1}, \ldots, \emph{\textbf{p}}\emph{\textbf{k}}_{\#E}\}$?
\\
If such a determination can be achieved  then
compared with those  in Theorem \ref{xianshibiaoshi} and Proposition \ref{hbvcxz}, it
is more efficient  to conduct SACT of $f$ since we   do  not require to consider the sampling set.

We quickly describe  the structure of this section. In subsection \ref{3rdresult}
we give a condition on the pair $(\varphi, \emph{\textbf{p}})$ such that the above determination can be achieved.
We also  address the determination in subsections \ref{neirong2} and \ref{vanishingqingkuang} for the case  that
$\varphi$ being positive definite.


%
%
\subsection{The third main result: a condition on  $(\varphi, \emph{\textbf{p}})$ such that  $\{\emph{\textbf{p}}\emph{\textbf{k}}_{1}, \ldots, \emph{\textbf{p}}\emph{\textbf{k}}_{\#E}\}$ is   eligible for the SACT} \label{3rdresult}
From the perspective of the signs of the real and imaginary parts of $\widehat{\mathcal{R}_{\emph{\textbf{p}}}\varphi}$,
a condition is given in the following theorem   such that the sampling set   $X_{\emph{\textbf{p}}}=\{\emph{\textbf{p}}\emph{\textbf{k}}_{1}, \ldots, \emph{\textbf{p}}\emph{\textbf{k}}_{\#E}\}$ is eligible for the SACT. Incidentally, for  $0\neq y\in \mathbb{R}$ its sign  $\hbox{sgn}(y)$
takes $1$ and $-1$ for $y>0$ and $y<0$, respectively.
For a function $0\not\equiv g: \mathbb{R}\longrightarrow \mathbb{R}$ we say that its sign function  $\hbox{sgn}(g(x))$ is \textbf{unchanged} if $g(x)\geq 0$
for any $x\in \mathbb{R}$  (or $g(x)\leq0$ for any $x\in \mathbb{R}$).

\begin{theo}\label{HHKK}
As previously, suppose that the generator  $\varphi\in L^{2}(\mathbb{R}^{2})$
satisfies  $\hbox{supp}(\varphi)$
$\subseteq[N_{1}, M_{1}]\times
[N_{2}, M_{2}]$, and   $f\in V(\varphi, \mathbb{Z}^{2})$ is an arbitrary target function such that $ \hbox{supp}(f)\subseteq [a_{1}, b_{1}]\times [a_{2}, b_{2}]$.
Additionally, suppose that
$\emph{\textbf{p}}=(\cos\theta, \sin\theta)$ is a direction vector such that $\mathcal{R}_{\emph{\textbf{p}}}\varphi$ is continuous.
 Define   $ E=\{\emph{\textbf{k}}_{1}, \ldots, \emph{\textbf{k}}_{\#E}\}=\big\{\big[\lceil a_{1}-M_{1}\rceil, \lfloor b_{1}-N_{1}\rfloor\big]\times
\big[\lceil a_{2}-M_{2}\rceil, \lfloor b_{2}-N_{2}\rfloor\big]\big\}\cap \mathbb{Z}^{2}$.
If
 $\widehat{\mathcal{R}_{\emph{\textbf{p}}}\varphi}=
\widehat{\mathcal{R}_{\emph{\textbf{p}}}\varphi}_{\Re}+\texttt{i}\widehat{\mathcal{R}_{\emph{\textbf{p}}}\varphi}_{\Im}$
satisfies the following  item (i) or (ii), and  $E$ satisfies item (iii):

(i) the real part $\widehat{\mathcal{R}_{\emph{\textbf{p}}}\varphi}_{\Re}\not\equiv0 $
and  its sign function  $\hbox{sgn}(\widehat{\mathcal{R}_{\emph{\textbf{p}}}\varphi}_{\Re})$ is unchanged;

(ii) the imaginary part $\widehat{\mathcal{R}_{\emph{\textbf{p}}}\varphi}_{\Im}\not\equiv0$ and  its sign function  $\hbox{sgn}(\widehat{\mathcal{R}_{\emph{\textbf{p}}}\varphi}_{\Im})$ is unchanged;

 (iii) if $\#E>1$ then    $\emph{\textbf{p}}\emph{\textbf{k}}_{l}\neq \emph{\textbf{p}} \emph{\textbf{k}}_{n}$ for any $l\neq n$;\\
then the matrix
 $A_{\varphi,\emph{\textbf{p}}, X_{\emph{\textbf{p}}}}$ defined via   \eqref{qixiang1234} is invertible
 where $X_{\emph{\textbf{p}}}=\{\emph{\textbf{p}}\emph{\textbf{k}}_{1}, \ldots, \emph{\textbf{p}}\emph{\textbf{k}}_{\#E}\}$;\\
Consequently,
$f$ can be determined uniquely by its
SA Radon samples at $X_{\emph{\textbf{p}}}$.
\end{theo}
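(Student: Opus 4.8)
The plan is to reduce the statement to an invertibility claim and then settle that claim by Fourier analysis. By Theorem \ref{theorem123} --- whose hypothesis that $\mathcal{R}_{\emph{\textbf{p}}}\varphi$ be continuous is exactly one of the assumptions here, and whose requirement that $\{\varphi(\cdot-\emph{\textbf{k}})\}$ be linearly independent is a standing hypothesis of the paper --- it suffices to prove that the $\#E\times\#E$ matrix $A:=A_{\varphi,\emph{\textbf{p}},X_{\emph{\textbf{p}}}}$ attached to $X_{\emph{\textbf{p}}}=\{\emph{\textbf{p}}\emph{\textbf{k}}_{1},\ldots,\emph{\textbf{p}}\emph{\textbf{k}}_{\#E}\}$ is invertible; the asserted unique determination of $f$ then follows from the $(\Longleftarrow)$ implication of Theorem \ref{theorem123}. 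Writing $g:=\mathcal{R}_{\emph{\textbf{p}}}\varphi$ and $t_{m}:=\emph{\textbf{p}}\emph{\textbf{k}}_{m}$, the entries are $A_{lm}=g(t_{l}-t_{m})$. Note that $g$ is continuous and, by Lemma \ref{pouy}, compactly supported, hence lies in $L^{1}(\mathbb{R})\cap L^{2}(\mathbb{R})$, so $\widehat{g}=\widehat{\varphi}(\emph{\textbf{p}}^{T}\cdot)$ is continuous; and by \eqref{40567} the Fourier transform of $g(\cdot-t_{m})$ is $e^{-\texttt{i}t_{m}\xi}\widehat{g}(\xi)$.

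The core step is to show $\overline{\emph{\textbf{c}}}^{T}A\emph{\textbf{c}}\neq0$ for every $\textbf{0}\neq\emph{\textbf{c}}=(c_{1},\ldots,c_{\#E})^{T}\in\mathbb{C}^{\#E}$, which forces $A$ to be injective, hence invertible. Put $P(\xi):=\sum_{m}c_{m}e^{-\texttt{i}t_{m}\xi}$. A routine relabelling of indices in $\overline{\emph{\textbf{c}}}^{T}A\emph{\textbf{c}}=\sum_{l,m}\overline{c_{l}}c_{m}g(t_{l}-t_{m})$ gives $\Re[\overline{\emph{\textbf{c}}}^{T}A\emph{\textbf{c}}]=\sum_{l,m}\overline{c_{l}}c_{m}u(t_{l}-t_{m})$ and $\Im[\overline{\emph{\textbf{c}}}^{T}A\emph{\textbf{c}}]=\sum_{l,m}\overline{c_{l}}c_{m}w(t_{l}-t_{m})$, where $u(x)=\tfrac12(g(x)+\overline{g(-x)})$ and $w(x)=\tfrac{1}{2\texttt{i}}(g(x)-\overline{g(-x)})$ are continuous and compactly supported with $\widehat{u}=\widehat{\mathcal{R}_{\emph{\textbf{p}}}\varphi}_{\Re}$ and $\widehat{w}=\widehat{\mathcal{R}_{\emph{\textbf{p}}}\varphi}_{\Im}$. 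I then want the Parseval-type identities $\sum_{l,m}\overline{c_{l}}c_{m}u(t_{l}-t_{m})=\frac{1}{2\pi}\int_{\mathbb{R}}\widehat{\mathcal{R}_{\emph{\textbf{p}}}\varphi}_{\Re}(\xi)|P(\xi)|^{2}\,d\xi$ and, analogously, $\sum_{l,m}\overline{c_{l}}c_{m}w(t_{l}-t_{m})=\frac{1}{2\pi}\int_{\mathbb{R}}\widehat{\mathcal{R}_{\emph{\textbf{p}}}\varphi}_{\Im}(\xi)|P(\xi)|^{2}\,d\xi$.

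Since $\widehat{g}$ need not be integrable, I would establish these identities by regularisation rather than by naive Fourier inversion: convolve $g$ (equivalently $u$ and $w$) with an even, nonnegative approximate identity $\psi_{\varepsilon}$ whose transform $\widehat{\psi}_{\varepsilon}$ is nonnegative and increases pointwise to $1$ as $\varepsilon\downarrow0$ (a Gaussian works). For $g\ast\psi_{\varepsilon}$ the transform decays rapidly, so the identities hold trivially there; the left-hand sums converge as $\varepsilon\downarrow0$ because $u$ and $w$ are continuous (there are only finitely many terms); and on the right the integrands $\widehat{\mathcal{R}_{\emph{\textbf{p}}}\varphi}_{\Re}\,\widehat{\psi}_{\varepsilon}\,|P|^{2}$ (resp.\ $\widehat{\mathcal{R}_{\emph{\textbf{p}}}\varphi}_{\Im}\,\widehat{\psi}_{\varepsilon}\,|P|^{2}$), which have a fixed sign by hypothesis (i) (resp.\ (ii)), pass to the limit by monotone convergence. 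This regularisation is the only genuinely delicate part of the argument; it also shows a posteriori that the limiting integrals are finite.

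To conclude, suppose $A\emph{\textbf{c}}=\textbf{0}$ for some $\emph{\textbf{c}}\neq\textbf{0}$; then $\overline{\emph{\textbf{c}}}^{T}A\emph{\textbf{c}}=0$, so both of the above integrals vanish. Under (i), $\widehat{\mathcal{R}_{\emph{\textbf{p}}}\varphi}_{\Re}$ is continuous, not identically zero, and of one fixed sign, so there is an open interval $I$ on which $|\widehat{\mathcal{R}_{\emph{\textbf{p}}}\varphi}_{\Re}|>0$. By (iii) the points $t_{1},\ldots,t_{\#E}$ are pairwise distinct, so Lemma \ref{lemma1234} forces $P\not\equiv0$ on $I$, and by continuity $P\neq0$ on some subinterval $J\subseteq I$; hence $\int_{\mathbb{R}}\widehat{\mathcal{R}_{\emph{\textbf{p}}}\varphi}_{\Re}|P|^{2}\,d\xi$ is nonzero with the sign of $\widehat{\mathcal{R}_{\emph{\textbf{p}}}\varphi}_{\Re}$ on $J$, contradicting its vanishing. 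Hypothesis (ii) is handled identically with $\widehat{\mathcal{R}_{\emph{\textbf{p}}}\varphi}_{\Im}$ in place of $\widehat{\mathcal{R}_{\emph{\textbf{p}}}\varphi}_{\Re}$. Thus $\overline{\emph{\textbf{c}}}^{T}A\emph{\textbf{c}}\neq0$ for all $\emph{\textbf{c}}\neq\textbf{0}$, so $A$ is invertible, and the final assertion follows from Theorem \ref{theorem123}.
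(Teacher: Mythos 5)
Your proposal is correct and follows essentially the same route as the paper: reduce to invertibility of $A_{\varphi,\emph{\textbf{p}},X_{\emph{\textbf{p}}}}$ via Theorem \ref{theorem123}, express the quadratic form of this matrix as $\frac{1}{2\pi}\int_{\mathbb{R}}\widehat{\mathcal{R}_{\emph{\textbf{p}}}\varphi}(\xi)\,|P(\xi)|^{2}\,d\xi$, and use the fixed-sign hypothesis together with Lemma \ref{lemma1234} and condition (iii) to show the relevant real or imaginary part of this integral cannot vanish. The only difference is that you justify the Fourier-inversion/Parseval step by Gaussian regularisation and monotone convergence, whereas the paper applies the inversion formula in \eqref{zhengdingold} directly; this is a more careful treatment of the same computation rather than a different argument.
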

\begin{proof}
The requirement for  the  continuity of $\mathcal{R}_{\emph{\textbf{p}}}\varphi$ in  Theorem \ref{theorem123} is satisfied here.
If the corresponding matrix
$A_{\varphi,\emph{\textbf{p}},X_{\emph{\textbf{p}}}}$ defined via  \eqref{qixiang1234} is invertible,
then it follows from Theorem \ref{theorem123} that  $f$ can be determined uniquely by its
SA Radon  samples at $X_{\emph{\textbf{p}}}=\{\emph{\textbf{p}}\emph{\textbf{k}}_{1}, \ldots, \emph{\textbf{p}}\emph{\textbf{k}}_{\#E}\}$.
We next prove that $A_{\varphi,\emph{\textbf{p}},X_{\emph{\textbf{p}}}}$ is invertible.
For any nonzero  vector $(\alpha_{1}, \ldots, \alpha_{\#E})^{T}$
$\in \mathbb{C}^{\#E}$ we have
\begin{align}\label{zhengdingold}\begin{array}{lllllllll}
\displaystyle \sum^{\#E}_{j=1}\sum^{\#E}_{n=1}\alpha_{j}\overline{\alpha}_{n}\mathcal{R}_{\emph{\textbf{p}}}\varphi(\emph{\textbf{p}}\emph{\textbf{k}}_{j}-\emph{\textbf{p}}\emph{\textbf{k}}_{n})&=
\displaystyle\frac{1}{2\pi}\sum^{\#E}_{j=1}\sum^{\#E}_{n=1}\alpha_{j}\overline{\alpha}_{n}\int_{\mathbb{R}}\widehat{\mathcal{R}_{\emph{\textbf{p}}}\varphi}(\xi)e^{\texttt{i}(\emph{\textbf{p}}\emph{\textbf{k}}_{j}-\emph{\textbf{p}}\emph{\textbf{k}}_{n})\xi}d\xi\\
&\displaystyle=\frac{1}{2\pi}\int_{\mathbb{R}}\widehat{\mathcal{R}_{\emph{\textbf{p}}}\varphi}(\xi)|\sum^{\#E}_{j=1}
\alpha_{j}e^{\texttt{i}\emph{\textbf{p}}\emph{\textbf{k}}_{j}\xi}|^{2}d\xi \quad  (\ref{zhengdingold} \hbox{A})
\end{array}
\end{align}
where  (\ref{zhengdingold}\hbox{A}) is derived from the  quadratic form $\sum^{\#E}_{j=1}\sum^{\#E}_{n=1}\alpha_{j}e^{\texttt{i}\emph{\textbf{p}}\emph{\textbf{k}}_{j}\xi}\overline{\alpha}_{n}
e^{-\texttt{i}\emph{\textbf{p}}\emph{\textbf{k}}_{n}\xi}=|\sum^{\#E}_{j=1}
\alpha_{j}e^{\texttt{i}\emph{\textbf{p}}\emph{\textbf{k}}_{j}\xi}|^{2}.$
By $\widehat{\mathcal{R}_{\emph{\textbf{p}}}\varphi}(\xi)=\widehat{\mathcal{R}_{\emph{\textbf{p}}}\varphi}_{\Re}(\xi)+\texttt{i}
\widehat{\mathcal{R}_{\emph{\textbf{p}}}\varphi}_{\Im}(\xi)$, \eqref{zhengdingold} can be further expressed as
\begin{align}\label{zhengding}\begin{array}{lllllllll}
\displaystyle\sum^{\#E}_{j=1}\sum^{\#E}_{n=1}\alpha_{j}\overline{\alpha}_{n}\mathcal{R}_{\emph{\textbf{p}}}\varphi(\emph{\textbf{p}}\emph{\textbf{k}}_{j}-\emph{\textbf{p}}\emph{\textbf{k}}_{n})
&\displaystyle=\frac{1}{2\pi}\int_{\mathbb{R}}\widehat{\mathcal{R}_{\emph{\textbf{p}}}\varphi}_{\Re}(\xi)|\sum^{\#E}_{j=1}
\alpha_{j}e^{\texttt{i}\emph{\textbf{p}}\emph{\textbf{k}}_{j}\xi}|^{2}d\xi\\
&\displaystyle+\frac{\texttt{i}}{2\pi}\int_{\mathbb{R}}\widehat{\mathcal{R}_{\emph{\textbf{p}}}\varphi}_{\Im}(\xi)|\sum^{\#E}_{j=1}
\alpha_{j}e^{\texttt{i}\emph{\textbf{p}}\emph{\textbf{k}}_{j}\xi}|^{2}d\xi.
\end{array}
\end{align}
Since $\varphi\in L^2(\mathbb{R}^{2})$ is compactly supported,  it follows from Lemma \ref{pouy} that  $\mathcal{R}_{\emph{\textbf{p}}}\varphi$
is compactly supported as well and belongs to $L^{2}(\mathbb{R})$. Then $\widehat{\mathcal{R}_{\emph{\textbf{p}}}\varphi}\in C^{\infty}(\mathbb{R})$.
Item (i) or (ii) implies that    $\widehat{\mathcal{R}_{\emph{\textbf{p}}}\varphi}\not\equiv0$.
 Without loss of generality it is assumed that $\widehat{\mathcal{R}_{\emph{\textbf{p}}}\varphi}_{\Re}\not\equiv0$ and
$\widehat{\mathcal{R}_{\emph{\textbf{p}}}\varphi}_{\Re}\geq0$. By  the continuity of $\widehat{\mathcal{R}_{\emph{\textbf{p}}}\varphi}_{\Re}$ there is $\delta>0$ and $\zeta\in \mathbb{R}$ such that
\begin{align}\label{bubianhao}\widehat{\mathcal{R}_{\emph{\textbf{p}}}\varphi}_{\Re}(\xi)>0\end{align} for any $\xi\in [\zeta-\delta, \zeta+\delta]$.
We next prove that $\{e^{\texttt{i}\emph{\textbf{p}}\emph{\textbf{k}}_{n}\xi}\}^{\#E}_{n=1}$
is linearly independent  on $[\zeta-\delta, \zeta+\delta]$.
If $\#E=1$ then the linear independence is clear.
If $\#E>1$ then it follows from item  (iii) that
$\emph{\textbf{p}}\emph{\textbf{k}}_{n}\neq \emph{\textbf{p}}\emph{\textbf{k}}_{j}$ for any $n\neq j$.
By Lemma \ref{lemma1234} we have   that $\{e^{\texttt{i}\emph{\textbf{p}}\emph{\textbf{k}}_{n}\xi}\}^{\#E}_{n=1}$
is linearly independent  on $[\zeta-\delta, \zeta+\delta]$. Consequently, there exists
$\xi_{0}\in [\zeta-\delta, \zeta+\delta]$ such that for the above  nonzero  vector $(\alpha_{1}, \ldots, \alpha_{\#E})^{T}$
$\in \mathbb{C}^{\#E}$ we have $\sum^{\#E}_{n=1}
\alpha_{n}e^{\texttt{i}\emph{\textbf{p}}\emph{\textbf{k}}_{n}\xi_{0}}\neq0.$ By the continuity of the functions in $\{e^{\texttt{i}\emph{\textbf{p}}\emph{\textbf{k}}_{n}\xi}\}^{\#E}_{n=1}$
we conclude that
\begin{align}\label{ouyang} \int^{\zeta+\delta}_{\zeta-\delta}|\sum^{\#E}_{n=1}
\alpha_{n}e^{\texttt{i}\emph{\textbf{p}}\emph{\textbf{k}}_{n}\xi}|^{2}d\xi>0.\end{align}
%
%
Now it follows from  \eqref{bubianhao} and \eqref{ouyang} that
\begin{align}\label{zhengding1234}\begin{array}{lllllllll}
\displaystyle \frac{1}{2\pi}\int_{\mathbb{R}}\widehat{\mathcal{R}_{\emph{\textbf{p}}}\varphi}_{\Re}(\xi)|\sum^{\#E}_{n=1}
\alpha_{n}e^{\texttt{i}\emph{\textbf{p}}\emph{\textbf{k}}_{n}\xi}|^{2}d\xi&\displaystyle\geq\frac{1}{2\pi}\int^{\zeta+\delta}_{\zeta-\delta}\widehat{\mathcal{R}_{\emph{\textbf{p}}}\varphi}_{\Re}(\xi)|\sum^{\#E}_{n=1}
\alpha_{k}e^{\texttt{i}\emph{\textbf{p}}\emph{\textbf{k}}_{n}\xi}|^{2}d\xi\\
&\displaystyle\geq\frac{1}{2\pi}\min_{\xi\in[\zeta-\delta, \zeta+\delta]}\{\widehat{\mathcal{R}_{\emph{\textbf{p}}}\varphi}_{\Re}(\xi)\} \int^{\zeta+\delta}_{\zeta-\delta}|\sum^{\#E}_{n=1}
\alpha_{n}e^{\texttt{i}\emph{\textbf{p}}\emph{\textbf{k}}_{n}\xi}|^{2}d\xi\\
&>0.
\end{array}
\end{align}
This combining with \eqref{zhengding} leads to that
\begin{align}\label{HJKK}\sum^{\#E}_{j=1}\sum^{\#E}_{n=1}\alpha_{j}\overline{\alpha}_{n}\mathcal{R}_{\emph{\textbf{p}}}\varphi(\emph{\textbf{p}}\emph{\textbf{k}}_{j}-\emph{\textbf{p}}\emph{\textbf{k}}_{n})>0.\end{align}
Recall that
\begin{align} \label{BVXCD} \sum^{\#E}_{j=1}\sum^{\#E}_{n=1}\alpha_{j}\overline{\alpha}_{n}\varphi(\emph{\textbf{p}}\emph{\textbf{k}}_{j}-\emph{\textbf{p}}\emph{\textbf{k}}_{n})=(\bar{\alpha}_{1}, \ldots, \bar{\alpha}_{\#E})A_{\varphi,\emph{\textbf{p}},X_{\emph{\textbf{p}}}}(\alpha_{1}, \ldots, \alpha_{\#E})^{T}\end{align}
and $(\alpha_{1}, \ldots, \alpha_{\#E})^{T}$
$\in \mathbb{C}^{\#E}$ is an arbitrary nonzero  vector.
Now it follows from \eqref{HJKK} and \eqref{BVXCD} that
$A_{\varphi,\emph{\textbf{p}},X_{\emph{\textbf{p}}}}$   is invertible.
By Theorem \ref{theorem123}, $f$ can be determined uniquely by its
SA Radon samples at $\{\emph{\textbf{p}}\emph{\textbf{k}}_{1}, \ldots, \emph{\textbf{p}}\emph{\textbf{k}}_{\#E}\}$.
\end{proof}

\subsection{Preliminary on positive (semi-)definite function}\label{moreqe}

\begin{defi}\label{definitionofPD}
 We say that a function $\phi: \mathbb{R}^{d}\longrightarrow \mathbb{C}$   is positive semi-definite   if  for all $N\in \mathbb{N}$, all sets $X=\{\emph{\textbf{x}}_{1}, \emph{\textbf{x}}_{2}, \ldots, \emph{\textbf{x}}_{N}\}\subseteq \mathbb{R}^{d}$, and all vectors  $\textbf{0}\neq (\alpha_{1}, \ldots, \alpha_{N})^{T}\in \mathbb{C}^{N}$, the quadratic form
\begin{align}\label{zhengding00}\begin{array}{llllll}
\sum^{N}_{j=1}\sum^{N}_{k=1}\alpha_{j}\overline{\alpha}_{k}\phi(\emph{\textbf{x}}_{j}-\emph{\textbf{x}}_{k})\\
=(\bar{\alpha}_{1}, \bar{\alpha}_{2}, \ldots, \bar{\alpha}_{N})\left(\begin{array}{cccccccccc}
\phi(\textbf{0})&\phi(\emph{\textbf{x}}_{1}-\emph{\textbf{x}}_{2})&\cdots&\phi(\emph{\textbf{x}}_{1}-\emph{\textbf{x}}_{N})\\
\phi(\emph{\textbf{x}}_{2}-\emph{\textbf{x}}_{1})&\phi(\textbf{0})&\cdots&\phi(\emph{\textbf{x}}_{2}-\emph{\textbf{x}}_{N})\\
\vdots&\vdots&\ddots&\vdots\\
\phi(\emph{\textbf{x}}_{N}-\emph{\textbf{x}}_{1})&\phi(\emph{\textbf{x}}_{N}-\emph{\textbf{x}}_{2})&\cdots&\phi(\textbf{0})
\end{array}\right)\left(\begin{array}{cccccccccc}
\alpha_{1}\\
\alpha_{2}\\
\vdots\\
\alpha_{N}
\end{array}\right)\\
\geq0.
\end{array}
\end{align}
Furthermore, the function $\phi$ is  positive definite if  the above quadratic form is positive
for all $\textbf{0}\neq(\alpha_{1}, \ldots, \alpha_{N})^{T}$.
\end{defi}

The celebrated result on positive semi-definite functions is their characterization    in terms of Fourier transform,
which  was established by Bochner \cite{Bochner}.
It is   as follows.
\begin{lem}\label{fuding}
A continuous function  $\phi: \mathbb{R}^{d}\longrightarrow \mathbb{C}$
is  positive semi-definite if and only if it is the Fourier transform of a finite nonnegative Borel measure $\mu$ on
$\mathbb{R}^{d}$ such that
$\phi(\emph{\textbf{x}})=\int_{\mathbb{R}^{d}}e^{-\texttt{i}\emph{\textbf{x}}\cdot\xi}d\mu(\xi).$
\end{lem}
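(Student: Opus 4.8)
The plan is to prove the two implications separately; sufficiency is routine, and necessity is the substantial part, which I would carry out by a Gaussian regularization argument (the classical route to Bochner's theorem).

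For the sufficiency direction, assume $\phi(\mathbf{x})=\int_{\mathbb{R}^{d}}e^{-\texttt{i}\mathbf{x}\cdot\xi}\,d\mu(\xi)$ with $\mu$ finite and nonnegative. Continuity of $\phi$ follows from dominated convergence ($|e^{-\texttt{i}\mathbf{x}\cdot\xi}|=1$ and $\mu(\mathbb{R}^{d})<\infty$). For any points $\mathbf{x}_{1},\dots,\mathbf{x}_{N}$ and coefficients $\alpha_{1},\dots,\alpha_{N}$, interchanging the finite sum with the integral gives
\begin{align}\notag
\sum_{j=1}^{N}\sum_{k=1}^{N}\alpha_{j}\overline{\alpha}_{k}\,\phi(\mathbf{x}_{j}-\mathbf{x}_{k})
=\int_{\mathbb{R}^{d}}\Big|\sum_{j=1}^{N}\alpha_{j}\,e^{-\texttt{i}\mathbf{x}_{j}\cdot\xi}\Big|^{2}d\mu(\xi)\ \geq\ 0,
\end{align}
so $\phi$ is positive semi-definite.

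For necessity, suppose $\phi$ is continuous and positive semi-definite. Applying the hypothesis to $1\times1$ and $2\times2$ submatrices first records $\phi(\mathbf{0})\geq0$, $\overline{\phi(\mathbf{x})}=\phi(-\mathbf{x})$ and $|\phi(\mathbf{x})|\leq\phi(\mathbf{0})$, so $\phi$ is bounded. The first real step is to upgrade the discrete positivity to the integral form $\int\!\!\int\phi(\mathbf{s}-\mathbf{t})\,g(\mathbf{s})\overline{g(\mathbf{t})}\,d\mathbf{s}\,d\mathbf{t}\geq0$, valid for $g\in C_{c}(\mathbb{R}^{d})$ by discretizing the double integral into Riemann sums and applying positive semi-definiteness with $\alpha_{j}$ proportional to $g(\mathbf{x}_{j})$ (using uniform continuity of $\phi$ on compacts), and then for Gaussian-type $g$ by truncation and dominated convergence (using $|\phi|\leq\phi(\mathbf{0})$). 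Feeding in $g(\mathbf{s})=e^{\texttt{i}\mathbf{s}\cdot\xi}e^{-\epsilon|\mathbf{s}|^{2}}$ and performing the change of variables $\mathbf{u}=\mathbf{s}-\mathbf{t}$ (the $\mathbf{t}$-integral produces a positive multiple of $e^{-\epsilon|\mathbf{u}|^{2}/2}$) yields, for every $\epsilon>0$ and $\xi\in\mathbb{R}^{d}$,
\begin{align}\notag
f_{\epsilon}(\xi)\ :=\ \int_{\mathbb{R}^{d}}\phi(\mathbf{u})\,e^{\texttt{i}\mathbf{u}\cdot\xi}\,e^{-\epsilon|\mathbf{u}|^{2}/2}\,d\mathbf{u}\ \geq\ 0 .
\end{align}
Each $f_{\epsilon}$ is continuous and bounded; to see it is integrable I would compute $\int f_{\epsilon}(\xi)e^{-\delta|\xi|^{2}}d\xi$ by Fubini, recognize $(\pi/\delta)^{d/2}e^{-|\mathbf{u}|^{2}/(4\delta)}$ as a Gaussian approximate identity of total mass $(2\pi)^{d}$, let $\delta\downarrow0$ to obtain the value $(2\pi)^{d}\phi(\mathbf{0})$, and then use monotone convergence (since $f_{\epsilon}\geq0$) to conclude $\|f_{\epsilon}\|_{L^{1}}=(2\pi)^{d}\phi(\mathbf{0})$. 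Since $\phi(\cdot)e^{-\epsilon|\cdot|^{2}/2}$ and its Fourier transform $f_{\epsilon}$ both lie in $L^{1}$, Fourier inversion gives $\phi(\mathbf{x})e^{-\epsilon|\mathbf{x}|^{2}/2}=\int_{\mathbb{R}^{d}}e^{-\texttt{i}\mathbf{x}\cdot\xi}\,d\mu_{\epsilon}(\xi)$ with $d\mu_{\epsilon}:=(2\pi)^{-d}f_{\epsilon}\,d\xi$ a nonnegative measure of total mass $\phi(\mathbf{0})$.

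Finally I would send $\epsilon\downarrow0$. The family $\{\mu_{\epsilon}\}_{0<\epsilon\leq1}$ is uniformly bounded and tight: averaging the previous identity over $\mathbf{x}\in[-r,r]^{d}$ and using $1-\prod_{i}\tfrac{\sin(r\xi_{i})}{r\xi_{i}}\geq\tfrac12$ on $\{\max_{i}|\xi_{i}|\geq2/r\}$ shows
\begin{align}\notag
\mu_{\epsilon}\big(\{\xi:\ \textstyle\max_{i}|\xi_{i}|\geq2/r\}\big)\ \leq\ \frac{2}{(2r)^{d}}\int_{[-r,r]^{d}}\big(\phi(\mathbf{0})-\operatorname{Re}(\phi(\mathbf{x})e^{-\epsilon|\mathbf{x}|^{2}/2})\big)\,d\mathbf{x},
\end{align}
whose right side tends to $0$ as $r\downarrow0$ uniformly in $\epsilon\in(0,1]$, by continuity of $\phi$ at $\mathbf{0}$ and $|1-e^{-\epsilon|\mathbf{x}|^{2}/2}|\leq|\mathbf{x}|^{2}/2$. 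Prokhorov's theorem then provides a subsequence $\mu_{\epsilon_{n}}$ converging weakly to a finite nonnegative Borel measure $\mu$; letting $\epsilon_{n}\downarrow0$ in the identity (the left side tends to $\phi(\mathbf{x})$, and the right side tends to $\int e^{-\texttt{i}\mathbf{x}\cdot\xi}\,d\mu(\xi)$ because $\xi\mapsto e^{-\texttt{i}\mathbf{x}\cdot\xi}$ is bounded and continuous and the $\mu_{\epsilon_{n}}$ converge weakly as tight finite measures) yields $\phi(\mathbf{x})=\int_{\mathbb{R}^{d}}e^{-\texttt{i}\mathbf{x}\cdot\xi}\,d\mu(\xi)$. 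The main obstacle is precisely this last passage to the limit: without the tightness estimate, mass of $\mu_{\epsilon}$ could escape to infinity and the non-decaying kernel $e^{-\texttt{i}\mathbf{x}\cdot\xi}$ could not be integrated against the limit, so securing tightness (together with the routine Fubini and Fourier-inversion justifications for $f_{\epsilon}$) is where the real work lies. As this is the classical theorem of Bochner, one may instead simply cite \cite{Bochner}.
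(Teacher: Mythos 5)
Your argument is correct, but it is doing much more than the paper does: for Lemma \ref{fuding} the paper offers no proof at all, treating Bochner's theorem as a classical fact and simply citing \cite{Bochner} (exactly the fallback you mention in your last sentence). Your self-contained route is the standard Gaussian-regularization proof, and the details check out: the $1\times1$ and $2\times2$ tests give $\phi(\textbf{0})\geq0$, Hermitian symmetry and $|\phi|\leq\phi(\textbf{0})$; the Riemann-sum upgrade to $\int\!\!\int\phi(\emph{\textbf{s}}-\emph{\textbf{t}})g(\emph{\textbf{s}})\overline{g(\emph{\textbf{t}})}\,d\emph{\textbf{s}}\,d\emph{\textbf{t}}\geq0$ is legitimate for $g\in C_{c}$ and extends to Gaussians by dominated convergence; completing the square in the $\emph{\textbf{t}}$-integral indeed produces a positive multiple of $e^{-\epsilon|\emph{\textbf{u}}|^{2}/2}$, so $f_{\epsilon}\geq0$; the Fubini computation of $\int f_{\epsilon}(\xi)e^{-\delta|\xi|^{2}}d\xi$ together with monotone convergence gives $\|f_{\epsilon}\|_{L^{1}}=(2\pi)^{d}\phi(\textbf{0})$, which justifies the inversion step (and your sign conventions are consistent with the paper's transform $\widehat{g}(\gamma)=\int g(\emph{\textbf{x}})e^{-\texttt{i}\emph{\textbf{x}}\cdot\gamma}d\emph{\textbf{x}}$ and the representation $\phi(\emph{\textbf{x}})=\int e^{-\texttt{i}\emph{\textbf{x}}\cdot\xi}d\mu(\xi)$); and the tightness bound via the cube average and $1-\prod_{i}\frac{\sin(r\xi_{i})}{r\xi_{i}}\geq\tfrac12$ on $\{\max_{i}|\xi_{i}|\geq2/r\}$, uniform in $\epsilon\in(0,1]$ because $|1-e^{-\epsilon|\emph{\textbf{x}}|^{2}/2}|\leq|\emph{\textbf{x}}|^{2}/2$, is exactly what makes the Prokhorov limit pass through the non-decaying kernel $e^{-\texttt{i}\emph{\textbf{x}}\cdot\xi}$. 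What each approach buys: the paper's citation is appropriate here because the lemma is used only as an external tool (it feeds Lemma \ref{characerization} and the positive-definiteness arguments in Theorems \ref{ddgo} and \ref{eligiblevectors}), whereas your proof buys self-containedness at the cost of about a page of analysis that is standard textbook material; if you include it, the only points worth writing out carefully are the $L^{1}$ bound on $f_{\epsilon}$ (needed for inversion) and the tightness estimate, which you correctly identified as the crux.
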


Based on Lemma \ref{fuding}, Wendland \cite[Theorem 6.11]{Wendlan} established the following tool for checking the positive definite property,  which will be used in Theorems  \ref{ddgo} and \ref{eligiblevectors} for  SACT sampling.

\begin{lem}\label{characerization}
Suppose that $\phi\in L^{1}(\mathbb{R}^{d})$ is continuous. Then $\phi$ is positive definite if and only if
$\phi$ is bounded  and its Fourier transform $\widehat{\phi}$ is nonnegative and nonvanishing.
Here $\widehat{\phi}$ being nonvanishing means  that $\int_{\mathbb{R}^{d}}\widehat{\phi}(\xi)d\xi=(2\pi)^{d/2}\phi(\textbf{0})\neq0.$
\end{lem}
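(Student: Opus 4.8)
The plan is to prove both implications using Bochner's theorem (Lemma \ref{fuding}), the Riemann--Lebesgue lemma (so that $\phi\in L^{1}(\mathbb{R}^{d})$ makes $\widehat{\phi}$ automatically continuous and vanishing at infinity), and Lemma \ref{lemma1234} to upgrade ``positive semi-definite'' to ``positive definite.'' For the ``if'' direction, assume $\phi\in L^{1}(\mathbb{R}^{d})\cap C(\mathbb{R}^{d})$ is bounded with $\widehat{\phi}\geq0$ and $\int_{\mathbb{R}^{d}}\widehat{\phi}\neq0$. The crux is to show first that $\widehat{\phi}\in L^{1}(\mathbb{R}^{d})$. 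Multiply $\widehat{\phi}$ by the Gaussian $e^{-\epsilon\|\xi\|_{2}^{2}}$ and recognize $(2\pi)^{-d}\int_{\mathbb{R}^{d}}\widehat{\phi}(\xi)e^{-\epsilon\|\xi\|_{2}^{2}}\,d\xi$ as $(\phi\ast G_{\epsilon})(\mathbf{0})$ for a Gaussian approximate identity $G_{\epsilon}$ of unit mass concentrating at the origin; since $\phi$ is bounded and continuous at $\mathbf{0}$, this tends to $\phi(\mathbf{0})$ as $\epsilon\to0$. Because $\widehat{\phi}\geq0$ and $e^{-\epsilon\|\xi\|_{2}^{2}}\uparrow1$, monotone convergence forces $\int_{\mathbb{R}^{d}}\widehat{\phi}(\xi)\,d\xi=(2\pi)^{d}\phi(\mathbf{0})<\infty$ (up to the paper's Fourier normalization), whence $\widehat{\phi}\in L^{1}$. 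With $\phi$ and $\widehat{\phi}$ both in $L^{1}$ and $\phi$ continuous, Fourier inversion gives $\phi(\mathbf{x})=(2\pi)^{-d}\int_{\mathbb{R}^{d}}\widehat{\phi}(\xi)e^{i\mathbf{x}\cdot\xi}\,d\xi$ for all $\mathbf{x}$.

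Given the inversion formula, for pairwise distinct $\mathbf{x}_{1},\dots,\mathbf{x}_{N}$ and any $\mathbf{0}\neq(\alpha_{1},\dots,\alpha_{N})^{T}\in\mathbb{C}^{N}$ we expand
\[
\sum_{j=1}^{N}\sum_{k=1}^{N}\alpha_{j}\overline{\alpha_{k}}\,\phi(\mathbf{x}_{j}-\mathbf{x}_{k})
=(2\pi)^{-d}\int_{\mathbb{R}^{d}}\widehat{\phi}(\xi)\,\Bigl|\sum_{j=1}^{N}\alpha_{j}e^{i\mathbf{x}_{j}\cdot\xi}\Bigr|^{2}\,d\xi\ \geq\ 0 ,
\]
which already yields positive semi-definiteness (equivalently, this is Lemma \ref{fuding} applied to the finite nonnegative measure $(2\pi)^{-d}\widehat{\phi}(-\xi)\,d\xi$). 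For strict positivity, note $\widehat{\phi}$ is continuous (Riemann--Lebesgue), nonnegative, and nonvanishing, hence strictly positive on some non-degenerate closed box $I\subseteq\mathbb{R}^{d}$; the displayed integral is then $\geq(2\pi)^{-d}\bigl(\min_{I}\widehat{\phi}\bigr)\int_{I}\bigl|\sum_{j}\alpha_{j}e^{i\mathbf{x}_{j}\cdot\xi}\bigr|^{2}\,d\xi$, and by Lemma \ref{lemma1234} the system $\{e^{i\mathbf{x}_{j}\cdot\xi}\}_{j=1}^{N}$ is linearly independent on $I$, so this last integral is strictly positive. Hence $\phi$ is positive definite.

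For the ``only if'' direction, assume $\phi$ is positive definite. Testing the $2\times2$ quadratic form on $\{\mathbf{0},\mathbf{x}\}$ shows the matrix with diagonal entries $\phi(\mathbf{0})$ and off-diagonal entries $\phi(\mathbf{x}),\phi(-\mathbf{x})$ is positive definite, whence $\phi(-\mathbf{x})=\overline{\phi(\mathbf{x})}$ and $|\phi(\mathbf{x})|\leq\phi(\mathbf{0})$; the $N=1$ form gives $\phi(\mathbf{0})>0$, so $\phi$ is bounded by $\phi(\mathbf{0})$. By Bochner's theorem (Lemma \ref{fuding}), $\phi=\widehat{\mu}$ for a finite nonnegative Borel measure $\mu$. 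Since also $\phi\in L^{1}(\mathbb{R}^{d})$, uniqueness of the Fourier transform on tempered distributions forces $\mu$ to be absolutely continuous with $d\mu(\xi)=(2\pi)^{-d}\widehat{\phi}(-\xi)\,d\xi$; because $\mu\geq0$ and $\widehat{\phi}$ is continuous, this gives $\widehat{\phi}\geq0$ everywhere, $\widehat{\phi}\in L^{1}$, and $\int_{\mathbb{R}^{d}}\widehat{\phi}(\xi)\,d\xi=(2\pi)^{d}\mu(\mathbb{R}^{d})=(2\pi)^{d}\phi(\mathbf{0})\neq0$, i.e. $\widehat{\phi}$ is nonvanishing.

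The main obstacle is the integrability step $\widehat{\phi}\in L^{1}$ in the ``if'' direction: it is genuinely false for arbitrary $\phi\in L^{1}\cap C$, and the sign hypothesis $\widehat{\phi}\geq0$ is precisely what converts the convergence $(\phi\ast G_{\epsilon})(\mathbf{0})\to\phi(\mathbf{0})$ into a monotone-convergence identity for $\int_{\mathbb{R}^{d}}\widehat{\phi}$ (this is also where boundedness of $\phi$ is used). A secondary, purely bookkeeping nuisance is keeping the Fourier constants consistent with the paper's convention $\widehat{g}(\gamma)=\int_{\mathbb{R}^{d}}g(\mathbf{x})e^{-i\mathbf{x}\cdot\gamma}\,d\mathbf{x}$, so that the asserted identity $\int_{\mathbb{R}^{d}}\widehat{\phi}=(2\pi)^{d/2}\phi(\mathbf{0})$ (up to whichever normalization is actually in force) comes out exactly; everything else is routine measure theory and Fourier analysis.
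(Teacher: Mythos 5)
Your argument is correct, but note that the paper itself contains no proof of Lemma \ref{characerization}: it is quoted verbatim from Wendland \cite[Theorem 6.11]{Wendlan}, so there is no in-paper proof to compare against. What you wrote is essentially the standard proof of that theorem. The sufficiency direction is handled exactly as in the classical argument: the Gaussian summability kernel identity $(2\pi)^{-d}\int\widehat{\phi}(\xi)e^{-\epsilon\|\xi\|_2^2}d\xi=(\phi\ast G_{\epsilon})(\mathbf{0})$ (Fubini is legitimate since $\phi\in L^1$), boundedness and continuity of $\phi$ to pass to the limit, and monotone convergence with $\widehat{\phi}\geq0$ to conclude $\widehat{\phi}\in L^1(\mathbb{R}^d)$ -- this is precisely the step where the hypotheses are genuinely needed, and you isolate it correctly. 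Fourier inversion then gives the quadratic form as $(2\pi)^{-d}\int\widehat{\phi}(\xi)\bigl|\sum_j\alpha_j e^{\texttt{i}\emph{\textbf{x}}_j\cdot\xi}\bigr|^2d\xi$, and strict positivity follows from continuity of $\widehat{\phi}$, positivity on a nondegenerate box, and the linear independence of the exponentials (Lemma \ref{lemma1234}), which is also how the paper itself argues in analogous places (e.g.\ in Proposition \ref{Remark3.2} and Theorem \ref{HHKK}). The necessity direction via the $2\times2$ form (Hermitian symmetry and $|\phi(\emph{\textbf{x}})|\leq\phi(\textbf{0})$, $\phi(\textbf{0})>0$) plus Bochner (Lemma \ref{fuding}) and distributional uniqueness to identify $d\mu(\xi)=(2\pi)^{-d}\widehat{\phi}(-\xi)d\xi$ is likewise standard and sound. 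One worthwhile observation you make explicitly: with the paper's convention $\widehat{g}(\gamma)=\int g(\emph{\textbf{x}})e^{-\texttt{i}\emph{\textbf{x}}\cdot\gamma}d\emph{\textbf{x}}$, the correct identity is $\int_{\mathbb{R}^d}\widehat{\phi}(\xi)d\xi=(2\pi)^{d}\phi(\textbf{0})$, so the factor $(2\pi)^{d/2}$ in the statement is a leftover of Wendland's symmetric normalization; this does not affect the substance (``nonvanishing'' just means $\widehat{\phi}\not\equiv0$, equivalently $\phi(\textbf{0})\neq0$), but it is a genuine bookkeeping inconsistency in the paper that your write-up handles properly.
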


The following remark  concerns on    the determination  of functions by  the positive definite  property.

\begin{rem}\label{6758}
If $\phi: \mathbb{R}^{d}\longrightarrow \mathbb{C}$ is positive definite  and continuous, then   the system   $\{\phi(\cdot-\emph{\textbf{x}}_{k})\}^{N}_{k=1}$ is linearly independent for any set  $X=\{\emph{\textbf{x}}_{1}, \emph{\textbf{x}}_{2}, \ldots, \emph{\textbf{x}}_{N}\}\subseteq \mathbb{R}^{d}$. Moreover, any function $f=\sum^{N}_{k=1}c_{k}\phi(\cdot-\emph{\textbf{x}}_{k})$ can be determined uniquely  by its samples  at $X$.
\end{rem}

\begin{proof}
If $\{\phi(\cdot-\emph{\textbf{x}}_{k})\}^{N}_{k=1}$ is linearly dependent
then there exists a nonzero vector $(\alpha_{1}, \ldots, \alpha_{N})\in \mathbb{C}^N$
such that $\sum^{N}_{k=1}\alpha_{k}\phi(\cdot-\emph{\textbf{x}}_{k})\equiv0$.
Particularly, for any $\emph{\textbf{x}}_{j}\in \{\emph{\textbf{x}}_{1}, \ldots, \emph{\textbf{x}}_{N}\}$ we have
$\sum^{N}_{k=1}\alpha_{k}\phi(\emph{\textbf{x}}_{j}-\emph{\textbf{x}}_{k})=0$.
Then the  quadratic form $$\sum^{N}_{j=1}\sum^{N}_{k=1}\alpha_{j}\overline{\alpha}_{k}\phi(\emph{\textbf{x}}_{j}-\emph{\textbf{x}}_{k})=0.$$
This contradicts with the positive definite property
\begin{align}\label{KLG}\sum^{N}_{j=1}\sum^{N}_{k=1}\alpha_{j}\overline{\alpha}_{k}\phi(\emph{\textbf{x}}_{j}-\emph{\textbf{x}}_{k})>0.\end{align}
Therefore,
$\{\phi(\cdot-\emph{\textbf{x}}_{k})\}^{N}_{k=1}$ is linearly independent.
Additionally,
\begin{align}\label{YTGC}\begin{array}{llllll}
\left(\begin{array}{cccccccccc}
\phi(\textbf{0})&\phi(\emph{\textbf{x}}_{1}-\emph{\textbf{x}}_{2})&\cdots&\phi(\emph{\textbf{x}}_{1}-\emph{\textbf{x}}_{N})\\
\phi(\emph{\textbf{x}}_{2}-\emph{\textbf{x}}_{1})&\phi(\textbf{0})&\cdots&\phi(\emph{\textbf{x}}_{2}-\emph{\textbf{x}}_{N})\\
\vdots&\vdots&\ddots&\vdots\\
\phi(\emph{\textbf{x}}_{N}-\emph{\textbf{x}}_{1})&\phi(\emph{\textbf{x}}_{N}-\emph{\textbf{x}}_{2})&\cdots&\phi(\textbf{0})
\end{array}\right)\left(\begin{array}{cccccccccc}
c_{1}\\
c_{2}\\
\vdots\\
c_{N}
\end{array}\right)=\left(\begin{array}{cccccccccc}
f(\emph{\textbf{x}}_{1})\\
f(\textbf{\emph{x}}_{2})\\
\vdots\\
f(\emph{\textbf{x}}_{N})
\end{array}\right).
\end{array}
\end{align}
Since $\phi$ is positive definite, it follows from \eqref{KLG}
that the above matrix is invertible.
Then the coefficient vector $(c_{1}, c_{2}, \ldots, c_{N})^{T}$ can be determined by the samples $f(\emph{\textbf{x}}_{1}),
f(\emph{\textbf{x}}_{2}), \ldots, f(\emph{\textbf{x}}_{N})$. Recall that $\{\phi(\cdot-\emph{\textbf{x}}_{k})\}^{N}_{k=1}$ is linearly independent.
With $(c_{1}, \ldots, c_{N})^{T}$ at hand, $f$ can be determined uniquely.
\end{proof}

\subsection{The fourth    main result:  pairs of  $(\varphi, \emph{\textbf{p}})$
such that $\{\emph{\textbf{p}}\emph{\textbf{k}}_{1}, \ldots, \emph{\textbf{p}}\emph{\textbf{k}}_{\#E}\}$
is eligible for SACT sampling, where
$\varphi$ is positive definite and nonvanishing.}\label{neirong2}
The following is the main result in this subsection. It applies to the case that $\varphi$ is  positive definite and
 nonvanishing ($\widehat{\varphi}(\textbf{0})\neq0$).

\begin{theo}\label{ddgo}
Suppose that $\varphi\in C(\mathbb{R}^{2})$ is compactly supported and positive definite
such that its Sobolev smoothness  $\nu_{2}(\varphi)>1/2$, $\widehat{\varphi}(\textbf{0})>0$ and   $\hbox{supp}(\varphi)\subseteq[N_{1}, M_{1}]\times
[N_{2}, M_{2}]$. Moreover, the arbitrary target function  $f\in V(\varphi, \mathbb{Z}^2)$
is  compactly supported such that $ \hbox{supp}(f)\subseteq [a_{1}, b_{1}]\times [a_{2}, b_{2}]$.
As previously, define  $ E=\{\emph{\textbf{k}}_{1}, \ldots, \emph{\textbf{k}}_{\#E}\}=\big\{\big[\lceil a_{1}-M_{1}\rceil, \lfloor b_{1}-N_{1}\rfloor\big]\times
\big[\lceil a_{2}-M_{2}\rceil, \lfloor b_{2}-N_{2}\rfloor\big]\big\}\cap \mathbb{Z}^{2},$
and correspondingly \begin{align}E^{+}=\left\{
\begin{array}{cccc}
   \emptyset,&\#E=1,\\
  \{\emph{\textbf{x}}-\emph{\textbf{y}}:
\emph{\textbf{x}}\neq\emph{\textbf{y}}\in E\},&\#E>1.
\end{array}
\right.\end{align}
Then
$f$ can be determined uniquely by its
SA Radon (w.r.t $\emph{\textbf{p}}$) samples at  $\{\emph{\textbf{p}}\emph{\textbf{k}}_{1}, \ldots, $
$\emph{\textbf{p}}\emph{\textbf{k}}_{\#E}\}$, where $\emph{\textbf{p}}$ is an arbitrary  direction vector from
$\{(\cos\theta, \sin\theta): \theta\in [0, 2\pi)\}\setminus\hbox{dv}_{\mathcal{N}_{E^{+}}}$
with $\mathcal{N}_{E^{+}}$ defined in Definition \ref{crictial}.
\end{theo}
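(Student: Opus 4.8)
The plan is to derive Theorem \ref{ddgo} directly from Theorem \ref{HHKK}, taking $X_{\emph{\textbf{p}}}=\{\emph{\textbf{p}}\emph{\textbf{k}}_{1},\ldots,\emph{\textbf{p}}\emph{\textbf{k}}_{\#E}\}$; the whole task reduces to checking, for each admissible $\emph{\textbf{p}}$, that the hypotheses of that theorem are met. First I would record that the set $\{(\cos\theta,\sin\theta):\theta\in[0,2\pi)\}\setminus\hbox{dv}_{\mathcal{N}_{E^{+}}}$ is nonempty: exactly as in the first paragraph of the proof of Theorem \ref{yibanshengcyuan}, one has $\#\hbox{dv}_{\mathcal{N}_{E^{+}}}<\infty$ by Definition \ref{crictial} and Proposition \ref{guanyucedu}(2). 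Fix one such $\emph{\textbf{p}}$.

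Next I would verify the standing continuity hypothesis on $\mathcal{R}_{\emph{\textbf{p}}}\varphi$ required by Theorem \ref{HHKK}: since $\nu_{2}(\varphi)>1/2$, Lemma \ref{fff987f} gives $\nu_{2}(\mathcal{R}_{\emph{\textbf{p}}}\varphi)\geq\nu_{2}(\varphi)>1/2$, whence $\mathcal{R}_{\emph{\textbf{p}}}\varphi$ is continuous by Proposition \ref{lianxuxing}(2). Item (iii) is immediate from the choice of $\emph{\textbf{p}}$: by Definition \ref{crictial}(2), $\emph{\textbf{p}}\notin\hbox{dv}_{\mathcal{N}_{E^{+}}}$ forces $\emph{\textbf{p}}(\emph{\textbf{k}}_{l}-\emph{\textbf{k}}_{n})\neq0$, i.e. $\emph{\textbf{p}}\emph{\textbf{k}}_{l}\neq\emph{\textbf{p}}\emph{\textbf{k}}_{n}$, for all $l\neq n$ when $\#E>1$ (and there is nothing to check when $\#E=1$).

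The main point is item (i), and this is where the positive-definiteness of $\varphi$ enters. As $\varphi$ is continuous and compactly supported it lies in $L^{1}(\mathbb{R}^{2})$ and is bounded, so Lemma \ref{characerization} yields $\widehat{\varphi}\geq0$ everywhere on $\mathbb{R}^{2}$. Hence $\widehat{\mathcal{R}_{\emph{\textbf{p}}}\varphi}(\xi)=\widehat{\varphi}(\emph{\textbf{p}}^{T}\xi)\geq0$ for all $\xi\in\mathbb{R}$; in particular $\widehat{\mathcal{R}_{\emph{\textbf{p}}}\varphi}$ is real-valued, so $\widehat{\mathcal{R}_{\emph{\textbf{p}}}\varphi}_{\Im}\equiv0$ while $\widehat{\mathcal{R}_{\emph{\textbf{p}}}\varphi}_{\Re}=\widehat{\varphi}(\emph{\textbf{p}}^{T}\cdot)\geq0$, so $\hbox{sgn}(\widehat{\mathcal{R}_{\emph{\textbf{p}}}\varphi}_{\Re})$ is unchanged. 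Moreover $\widehat{\mathcal{R}_{\emph{\textbf{p}}}\varphi}(0)=\widehat{\varphi}(\textbf{0})>0$ by the nonvanishing hypothesis, so $\widehat{\mathcal{R}_{\emph{\textbf{p}}}\varphi}_{\Re}\not\equiv0$. Thus item (i) of Theorem \ref{HHKK} holds; that theorem then gives the invertibility of $A_{\varphi,\emph{\textbf{p}},X_{\emph{\textbf{p}}}}$ in \eqref{qixiang1234} with $X_{\emph{\textbf{p}}}=\{\emph{\textbf{p}}\emph{\textbf{k}}_{1},\ldots,\emph{\textbf{p}}\emph{\textbf{k}}_{\#E}\}$, and hence (via Theorem \ref{theorem123}) the unique determination of $f$ from its single-angle Radon samples at $X_{\emph{\textbf{p}}}$.

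Equivalently, the same computation shows that $\mathcal{R}_{\emph{\textbf{p}}}\varphi$ is itself positive definite on $\mathbb{R}$ (it is continuous and compactly supported by Lemma \ref{pouy}, hence in $L^{1}(\mathbb{R})$ and bounded, with $\widehat{\mathcal{R}_{\emph{\textbf{p}}}\varphi}\geq0$ and nonvanishing since $\widehat{\mathcal{R}_{\emph{\textbf{p}}}\varphi}(0)>0$), so $A_{\varphi,\emph{\textbf{p}},X_{\emph{\textbf{p}}}}$ is exactly the positive definite matrix attached to $\mathcal{R}_{\emph{\textbf{p}}}\varphi$ at the distinct nodes $\emph{\textbf{p}}\emph{\textbf{k}}_{1},\ldots,\emph{\textbf{p}}\emph{\textbf{k}}_{\#E}$, and Remark \ref{6758} applies directly; I would likely present the argument this way since it is self-contained. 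The only step carrying genuine content is the passage from the abstract positive-definiteness of $\varphi$ to the pointwise inequality $\widehat{\varphi}\geq0$, i.e. the invocation of Bochner's theorem through Lemma \ref{characerization}; everything else — nonemptiness of the direction-vector set, distinctness of the nodes, and continuity of $\mathcal{R}_{\emph{\textbf{p}}}\varphi$ — is bookkeeping already carried out in Theorems \ref{yibanshengcyuan} and \ref{HHKK}.
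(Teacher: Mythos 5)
Your proposal is correct and follows essentially the same route as the paper's proof: reduce to Theorem \ref{HHKK} with $X_{\emph{\textbf{p}}}=\{\emph{\textbf{p}}\emph{\textbf{k}}_{1},\ldots,\emph{\textbf{p}}\emph{\textbf{k}}_{\#E}\}$, using Lemma \ref{characerization} (Bochner/Wendland) to get $\widehat{\varphi}\geq 0$ and hence item (i), the choice of $\emph{\textbf{p}}\notin \hbox{dv}_{\mathcal{N}_{E^{+}}}$ for the distinctness in item (iii), and $\nu_{2}(\varphi)>1/2$ with Proposition \ref{lianxuxing}(2) for continuity of $\mathcal{R}_{\emph{\textbf{p}}}\varphi$. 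Your shortcut for $\widehat{\mathcal{R}_{\emph{\textbf{p}}}\varphi}\not\equiv 0$ (evaluating at $\xi=0$, where the paper instead invokes smoothness of $\widehat{\varphi}$ and a small disc) and your alternative packaging via positive definiteness of $\mathcal{R}_{\emph{\textbf{p}}}\varphi$ together with Remark \ref{6758} are harmless variants of the same argument.
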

\begin{proof}
Recall that it has been proved in the proof of Theorem \ref{yibanshengcyuan} that   $\{(\cos\theta, \sin\theta): \theta\in [0, 2\pi)\}\setminus\hbox{dv}_{\mathcal{N}_{E^{+}}}$
is not empty.
Next we     prove  the following three items.

(1) If $\#E>1$ then  for any   $l\neq n\in \{1, \ldots, \#E\}$ and any direction vector $\emph{\textbf{p}}\in \{(\cos\theta, \sin\theta): \theta\in [0, 2\pi)\}\setminus\hbox{dv}_{\mathcal{N}_{E^{+}}}$, we have $\emph{\textbf{p}}\emph{\textbf{k}}_{l}\neq\emph{\textbf{p}}\emph{\textbf{k}}_{n}$.

(2) For any  direction vector $\emph{\textbf{p}}$, the Radon transform $\mathcal{R}_{\emph{\textbf{p}}}\varphi$
is continuous.

(3) Suppose that  $\emph{\textbf{p}}$ is any   fixed  direction vector. Then we have
$\widehat{\mathcal{R}_{\emph{\textbf{p}}}\varphi}\not\equiv0$ and $\widehat{\mathcal{R}_{\emph{\textbf{p}}}\varphi}\geq0$.\\
Clearly, if the above three items are satisfied then     the requirements in Theorem \ref{HHKK} are satisfied  for  any direction vector $\emph{\textbf{p}}\in \{(\cos\theta, \sin\theta): \theta\in [0, 2\pi)\}\setminus\hbox{dv}_{\mathcal{N}_{E^{+}}}$.
Consequently, it follows from Theorem \ref{HHKK} that
$f$ can be determined uniquely by its
SA Radon (w.r.t $\emph{\textbf{p}}$) samples at  $\{\emph{\textbf{p}}\emph{\textbf{k}}_{1}, \ldots, \emph{\textbf{p}}\emph{\textbf{k}}_{\#E}\}$.

We first prove (1). One can check that, for any
$l\neq n\in \{1, 2, \ldots, \#E\} $ it holds that
$\emph{\textbf{p}}\emph{\textbf{k}}_{l}\neq \emph{\textbf{p}}\emph{\textbf{k}}_{n}$
if and only if $\emph{\textbf{p}}\notin \mathcal{N}_{E^{+}}$. Then for any $\emph{\textbf{p}}\in
\{(\cos\theta, \sin\theta): \theta\in [0, 2\pi)\}\setminus\hbox{dv}_{\mathcal{N}_{E^{+}}}$,
item (1) holds.


Next we prove item (2). Recall that $\nu_{2}(\varphi)>1/2$. Then it follows from
 Proposition \ref{lianxuxing} (2) that $\mathcal{R}_{\emph{\textbf{p}}}\varphi$ is continuous.

Finally, we need to prove item (3).
Since $\varphi\in C(\mathbb{R}^{2})$ is  positive definite, by Lemma \ref{characerization} we have $\widehat{\varphi}\geq0$.
Now for any direction vector $\emph{\textbf{p}}=(\cos\theta, \sin\theta)$ we have that $\widehat{\mathcal{R}_{\emph{\textbf{p}}}\varphi}=\widehat{\varphi}(\emph{\textbf{p}}^{T}\cdot)\geq0$.
Additionally,
$\varphi\in C(\mathbb{R}^{2})$ is compactly supported then $\widehat{\varphi}\in C^{\infty}(\mathbb{R}^{2})$.
This together with
$\widehat{\varphi}(\textbf{0})=\int_{\mathbb{R}^{2}}\varphi(\emph{\textbf{x}})d\emph{\textbf{x}}>0$ leads to that  exists a closed disc  $\mathcal{D}(\textbf{0}, \delta)=
\{\xi\in \mathbb{R}^{2}: \|\xi\|_{2}\leq\delta\}$  such that for any $\xi\in \mathcal{D}(\textbf{0}, \delta)$ we have $\widehat{\varphi}(\xi)>0.$
For any $\gamma\in \mathbb{R}$ such that $|\gamma|\leq\delta$
we have $\gamma \emph{\textbf{p}}^{T}\in U(\textbf{0}, \delta)$ and consequently  $\widehat{\mathcal{R}_{\emph{\textbf{p}}}\varphi}(\gamma)=\widehat{\varphi}(\gamma \emph{\textbf{p}}^{T})>0.$
That is, $\widehat{\mathcal{R}_{\emph{\textbf{p}}}\varphi}\not\equiv0.$

 Now by Theorem \ref{HHKK} the target function
$f$ can be determined uniquely by its
SA Radon   samples at  $\{\emph{\textbf{p}}\emph{\textbf{k}}_{1}, \ldots, \emph{\textbf{p}}\emph{\textbf{k}}_{\#E}\}$.
The proof is completed.
%
%
%
%
%
\end{proof}

\begin{rem}\label{xiangliangdezuoyong}
As addressed in item (3) of the proof of  Theorem \ref{ddgo}, the nonvanishing property $\widehat{\varphi}(\textbf{0})=\int_{\mathbb{R}^{2}}\varphi(\emph{\textbf{x}})d\emph{\textbf{x}}>0$ guarantees that $\widehat{\mathcal{R}_{\emph{\textbf{p}}}\varphi}\not\equiv0$  for any direction vector $\emph{\textbf{p}}.$
Such a property brings great flexibility of  $\emph{\textbf{p}}$
for the SACT sampling.
In what we  follows introduce   a class of  box-splines which are positive and nonvanishing.
\end{rem}

The $m$th cardinal B-spline $B_{m}$ is defined
by
$B_{m}:=\overbrace{\chi_{(0,1]}\star\ldots\star\chi_{(0,1]}}^{m\ \hbox{copies}}$ (c.f. \cite{Wenchangsun0,Wenchangsun}),
where $m\in \mathbb{N}$, as in Remark \ref{continuous123} $\chi_{(0,1]}$ is the characteristic function of $(0,1]$
and $\star$ is the convolution.
Through the  simple calculation  (c.f. \cite{Chui}) we have $\hbox{supp}(B_{m})=(0, m]$, and   \begin{align}
\label{fuliyebiaodashi}\widehat{B_{m}}(\xi)=e^{-\texttt{i} m\xi/2}[\frac{\sin\xi/2}{\xi/2}]^{m}. \end{align}

\begin{rem}\label{guanghuaxingremark}
 For $s<m-1/2$, one can check that
$\int_{\mathbb{R}}|\widehat{B_{m}}(\xi)|^{2}(1+\xi^{2})^{s}d\xi<\infty$.
Then   the Sobolev smoothness $\nu_{2}(B_{m})=m-1/2.$
By Proposition \ref{lianxuxing}, $B_{m}$ is continuous for $m\geq2.$
\end{rem}

\begin{prop}\label{ertyfc}
Through the  tensor product we  define the box-spline $\varphi: \mathbb{R}^{2}\longrightarrow \mathbb{R}$ by
$\varphi(x_{1},  x_{2})=\prod^{2}_{k=1}B_{2n_{k}}(x_{k}+n_{k}), \ n_{k}\in \mathbb{N}.$
Then $\varphi$ is   compactly supported, continuous and  positive definite    such that $\widehat{\varphi}(\textbf{0})=\int_{\mathbb{R}^{2}}\varphi(\emph{\textbf{x}})d\emph{\textbf{x}}>0.$
Moreover, the Sobolev smoothness  $\nu_{2}(\varphi)\geq1$. Consequently, it satisfies the requirement in  Theorem \ref{ddgo}.
\end{prop}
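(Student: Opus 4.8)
The plan is to reduce every assertion to one-dimensional facts about the cardinal B-splines $B_{2n_k}$ through the tensor-product structure, the key computation being an explicit formula for $\widehat\varphi$. Starting from \eqref{fuliyebiaodashi} and the translation rule $\widehat{g(\cdot+n)}(\xi)=e^{\texttt{i}n\xi}\,\widehat{g}(\xi)$, one finds $\widehat{B_{2n_k}(\cdot+n_k)}(\xi)=e^{\texttt{i}n_k\xi}e^{-\texttt{i}n_k\xi}\big[\tfrac{\sin\xi/2}{\xi/2}\big]^{2n_k}=\big[\tfrac{\sin\xi/2}{\xi/2}\big]^{2n_k}$, the shift $+n_k$ being chosen precisely so as to cancel the phase. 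Hence $\widehat\varphi(\xi_1,\xi_2)=\prod_{k=1}^{2}\big[\tfrac{\sin\xi_k/2}{\xi_k/2}\big]^{2n_k}$, which is real, nonnegative (each factor carries the even exponent $2n_k$), lies in $C^\infty(\mathbb{R}^2)$ since $\varphi$ has compact support, and satisfies $\widehat\varphi(\textbf{0})=1$; in particular $\int_{\mathbb{R}^2}\varphi(\emph{\textbf{x}})d\emph{\textbf{x}}=\widehat\varphi(\textbf{0})=1>0$, which is the non-vanishing property.

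I would then dispose of the support and continuity claims. Since $\hbox{supp}(B_m)=(0,m]$, translating by $n_k$ gives $\hbox{supp}(B_{2n_k}(\cdot+n_k))=(-n_k,n_k]$, so $\hbox{supp}(\varphi)\subseteq[-n_1,n_1]\times[-n_2,n_2]$ is compact (this fixes $N_i=-n_i$, $M_i=n_i$ in the notation of Theorem \ref{ddgo}), and since $2n_k\ge 2$, Remark \ref{guanghuaxingremark} gives that $B_{2n_k}$ is continuous, hence so is the tensor product $\varphi$. For positive definiteness I would apply Lemma \ref{characerization}: $\varphi$ is continuous, and being continuous with compact support it is bounded and lies in $L^1(\mathbb{R}^2)$; $\widehat\varphi\ge 0$ was shown above; and $\widehat\varphi$ is nonvanishing because it is continuous with $\widehat\varphi(\textbf{0})=1>0$, so $\int_{\mathbb{R}^2}\widehat\varphi(\xi)d\xi>0$ (equivalently $\varphi(\textbf{0})=\prod_{k}B_{2n_k}(n_k)>0$, a B-spline being strictly positive in the interior of its support). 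Thus $\varphi$ is positive definite.

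For the Sobolev smoothness I would exploit the product form of $\widehat\varphi$ together with the elementary inequality $(1+\|\xi\|_2^2)^{\varsigma}=(1+\xi_1^2+\xi_2^2)^{\varsigma}\le(1+\xi_1^2)^{\varsigma}(1+\xi_2^2)^{\varsigma}$, valid for $\varsigma\ge 0$, which factors the defining integral in \eqref{defi_s}:
\[
\int_{\mathbb{R}^2}|\widehat\varphi(\xi)|^{2}(1+\|\xi\|_2^2)^{\varsigma}d\xi\le\prod_{k=1}^{2}\int_{\mathbb{R}}|\widehat{B_{2n_k}}(\xi_k)|^{2}(1+\xi_k^2)^{\varsigma}d\xi_k .
\]
Each one-dimensional factor is finite for $\varsigma<2n_k-1/2=\nu_2(B_{2n_k})$ by Remark \ref{guanghuaxingremark}, and since $n_k\ge 1$ forces $2n_k-1/2\ge 3/2$, the right-hand side is finite for every $\varsigma\le 1$; hence $\nu_2(\varphi)\ge 1>1/2$. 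Finally, by Remark \ref{6758} a continuous positive definite function has linearly independent shifts over any point set, in particular over $\mathbb{Z}^2$, so all hypotheses of Theorem \ref{ddgo} hold and the conclusion follows. I expect the only genuinely delicate point to be the phase cancellation in the first step: it is precisely the pairing of the even-order spline $B_{2n_k}$ with the shift $+n_k$ that renders $\widehat\varphi$ real and nonnegative, which is what makes Lemma \ref{characerization} applicable; for a generic B-spline or a generic translation $\widehat\varphi$ would carry a non-trivial phase and the positive-definiteness argument would break down. Everything else is a routine factorization of one-dimensional spline facts through the tensor product.
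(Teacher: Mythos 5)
Your proof is correct, and in two places it takes a genuinely different (and cleaner) route than the paper. For positive definiteness the paper does not invoke Lemma \ref{characerization} at this point: it first shows positive semi-definiteness through the quadratic-form identity $\sum_{j,k}\alpha_{j}\overline{\alpha}_{k}\varphi(\emph{\textbf{x}}_{j}-\emph{\textbf{x}}_{k})=\frac{1}{2\pi}\int_{\mathbb{R}^{2}}\widehat{\varphi}(\xi)\big|\sum_{k}\alpha_{k}e^{\texttt{i}\emph{\textbf{x}}_{k}\cdot\xi}\big|^{2}d\xi$ and then upgrades to strict positivity by combining $\widehat{\varphi}(\textbf{0})>0$, continuity of $\widehat{\varphi}$, and the linear independence of exponentials (Lemma \ref{lemma1234}), mimicking \eqref{ouyang}--\eqref{zhengding1234}; you instead verify the hypotheses of the Bochner--Wendland criterion (Lemma \ref{characerization}) directly ($\varphi$ continuous, compactly supported hence bounded and in $L^{1}$, $\widehat{\varphi}\ge 0$, and $\int\widehat{\varphi}>0$ since $\widehat{\varphi}$ is continuous, nonnegative and equals $1$ at the origin), which is shorter and uses a lemma the paper has already stated. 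For the Sobolev smoothness the paper splits $\mathbb{R}^{2}$ into the regions $|\xi_{1}|\le 1$, $|\xi_{2}|\le 1$ and $|\xi_{1}|,\xi_{2}>1$ and estimates each piece to get $\varsigma=1$; your factorization via $(1+\xi_{1}^{2}+\xi_{2}^{2})^{\varsigma}\le(1+\xi_{1}^{2})^{\varsigma}(1+\xi_{2}^{2})^{\varsigma}$ reduces everything to the one-dimensional bound of Remark \ref{guanghuaxingremark} and in fact yields the stronger conclusion $\nu_{2}(\varphi)\ge 3/2$ (you only claim $\ge 1$, which suffices). Your explicit phase-cancellation computation of $\widehat{\varphi}$ and the remark on linear independence of the integer shifts via Remark \ref{6758} are points the paper leaves implicit ("direct calculation", and the standing assumption, respectively), so they strengthen rather than deviate from the argument; both approaches are sound, yours buying brevity and a sharper smoothness estimate, the paper's buying a self-contained verification that does not lean on the characterization lemma.
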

\begin{proof}
By Remark \ref{guanghuaxingremark}, both $B_{2n_{1}}$
and $B_{2n_{2}}$ are continuous. So are
$B_{2n_{1}}(\cdot+n_{1})$
and $B_{2n_{2}}(\cdot+n_{2})$. Then their tensor product $\varphi$ is also continuous.
Through the direct calculation one can check that  $ \widehat{\varphi}(\xi_{1}, \xi_{2})=
\prod^{2}_{k=1}\Big[\frac{\sin\xi_{k}/2}{\xi_{k}/2}\Big]^{2n_{k}},$
and it follows from $\hbox{supp}(B_{2n_{i}})=(0, 2n_{i}]$ that   $\hbox{supp}(\varphi)=(-n_{1}, n_{1}]\times (-n_{2}, n_{2}]$.
Clearly, $\widehat{\varphi}\geq0$ and $\widehat{\varphi}(\textbf{0})=\int_{\mathbb{R}^{2}}\varphi(\emph{\textbf{x}})d\emph{\textbf{x}}=1$.
As in  (\ref{zhengdingold}\hbox{A}), for any nonzero  $(\alpha_{1}, \ldots, \alpha_{N})\in \mathbb{C}^{N}$
and any set  $\{\emph{\textbf{x}}_{1}, \ldots, \emph{\textbf{x}}_{N}\}\subseteq \mathbb{R}^{2}$
one can check that
\begin{align}\label{BVCNN}\begin{array}{lllllllll}
\displaystyle \sum^{N}_{j=1}\sum^{N}_{k=1}\alpha_{j}\overline{\alpha}_{k}\varphi(\emph{\textbf{x}}_{j}-\emph{\textbf{x}}_{k})
&\displaystyle=\frac{1}{2\pi}\int_{\mathbb{R}^{2}}\widehat{\varphi}(\xi)|\sum^{N}_{k=1}
\alpha_{k}e^{\texttt{i}\emph{\textbf{x}}_{k}\cdot\xi}|^{2}d\xi.
\end{array}
\end{align}
From this and  $\widehat{\varphi}\geq0$ we have  that   $\varphi$ is positive semi-definite.
By Lemma \ref{lemma1234}, the set of continuous  functions $\{e^{\texttt{i}\emph{\textbf{x}}_{k}\cdot\xi}: k=1, \ldots, N\}$ are  linearly independent. Since $\varphi\in C(\mathbb{R}^2)$ is compactly supported then $\widehat{\varphi}\in C^{\infty}(\mathbb{R}^2)$.
Now combining the continuities of $\widehat{\varphi}$ and $\{e^{\texttt{i}\emph{\textbf{x}}_{k}\cdot\xi}: k=1, \ldots, N\}$,
 the above  linear independence  and  $\widehat{\varphi}(\textbf{0})>0$,   through the similar procedures in
\eqref{ouyang} and \eqref{zhengding1234}  one can prove  the integral in
\eqref{BVCNN} is positive.  Consequently, $\varphi$ is positive definite.

In what follows, we prove that $\nu_{2}(\varphi)\geq1$. First, we have
\begin{align}\label{by123}\begin{array}{lllllllll}
\displaystyle \int_{\mathbb{R}^{2}}|\widehat{\varphi}(\xi_{1}, \xi_{2})|^{2}(1+\xi^{2}_{1}+\xi^{2}_{2})d\xi_{1}d\xi_{2}& \displaystyle\leq\int_{|\xi_{1}|\leq1, \xi_{2}\in \mathbb{R}}|\widehat{\varphi}(\xi_{1}, \xi_{2})|^{2}(1+\xi^{2}_{1}+\xi^{2}_{2})d\xi_{1}d\xi_{2}\\
&\displaystyle+\int_{|\xi_{2}|\leq1, \xi_{1}\in \mathbb{R}}|\widehat{\varphi}(\xi_{1}, \xi_{2})|^{2}(1+\xi^{2}_{1}+\xi^{2}_{2})d\xi_{1}d\xi_{2}\\
&\displaystyle+\int_{|\xi_{1}|>1, \xi_{2}>1}|\widehat{\varphi}(\xi_{1}, \xi_{2})|^{2}(1+\xi^{2}_{1}+\xi^{2}_{2})d\xi_{1}d\xi_{2}\\
&:=I_{1}+I_{2}+I_{3}.
\end{array}
\end{align}
We first estimate $I_{1}$ as follows,
\begin{align}\begin{array}{lllllllll}
I_{1}&=\displaystyle\int_{|\xi_{1}|\leq1, \xi_{2}\in \mathbb{R}}|\widehat{\varphi}(\xi_{1}, \xi_{2})|^{2}(1+\xi^{2}_{1}+\xi^{2}_{2})d\xi_{1}d\xi_{2}\\
&=\displaystyle\int_{|\xi_{1}|\leq1, |\xi_{2}|\leq\sqrt{2}}|\widehat{\varphi}(\xi_{1}, \xi_{2})|^{2}(1+\xi^{2}_{1}+\xi^{2}_{2})d\xi_{1}d\xi_{2}\\
&+\displaystyle\int_{|\xi_{1}|\leq1, |\xi_{2}|>\sqrt{2}}|\widehat{\varphi}(\xi_{1}, \xi_{2})|^{2}(1+\xi^{2}_{1}+\xi^{2}_{2})d\xi_{1}d\xi_{2}\\
&:=I_{11}+I_{12}.
\end{array}
\end{align}
Recall  that  $\widehat{\varphi}\in C^{\infty}(\mathbb{R}^{2}).$
By the continuity of $\widehat{\varphi}$ we have $I_{11}<\infty$. Moreover,
\begin{align}\label{VCXBZ}\begin{array}{lllllllll}
I_{12}&=\displaystyle\int_{|\xi_{1}|\leq1, |\xi_{2}|>\sqrt{2}}|\widehat{\varphi}(\xi_{1}, \xi_{2})|^{2}(1+\xi^{2}_{1}+\xi^{2}_{2})d\xi_{1}d\xi_{2}\\
&\displaystyle\leq 2\int_{|\xi_{1}|\leq1}\Big[\frac{\sin\xi_{1}/2}{\xi_{1}/2}\Big]^{2n_{1}}d\xi_{1}
\int_{|\xi_{2}|>1}\Big[\frac{\sin\xi_{2}/2}{\xi_{2}/2}\Big]^{4n_{2}}\xi^{2}_{2}d\xi_{2} \ \ (\ref{VCXBZ} A)\\
&<\infty,
\end{array}
\end{align}
where (\ref{VCXBZ}$A$) is from  $1+\xi^{2}_{1}\leq \xi^{2}_{2}$.
Consequently, $I_{1}=I_{11}+I_{12}<\infty$. Similarly, one can prove that  $I_{2}<\infty$.
Additionally,
\begin{align}\begin{array}{lllllllll}
I_{3}&\leq 2\displaystyle\int_{|\xi_{1}|>1, \xi_{2}>1}|\widehat{\varphi}(\xi_{1}, \xi_{2})|^{2}(\xi^{2}_{1}+\xi^{2}_{2})d\xi_{1}d\xi_{2}\\
&\displaystyle\leq2\times 2^{2(n_{1}+n_{2})}\Big[\int_{|\xi_{1}|>1, \xi_{2}>1}\frac{1}{\xi^{4n_{1}-2}_{1}\xi^{4n_{1}}_{2}}d\xi_{1}d\xi_{2}+\int_{|\xi_{1}|>1, \xi_{2}>1}\frac{1}{\xi^{4n_{1}-2}_{2}\xi^{4n_{1}}_{1}}d\xi_{1}d\xi_{2}\Big]\\
&<\infty.
\end{array}
\end{align}
Now by \eqref{by123}, we have $\int_{\mathbb{R}^{2}}|\widehat{\varphi}(\xi_{1}, \xi_{2})|^{2}(1+\xi^{2}_{1}+\xi^{2}_{2})d\xi_{1}d\xi_{2}<\infty$,
and consequently, $\nu_{2}(\varphi)\geq1$. This completes the proof.
\end{proof}

\subsection{The fifth    main result:    pairs of  $(\varphi, \emph{\textbf{p}})$
such that $\{\emph{\textbf{p}}\emph{\textbf{k}}_{1}, \ldots, \emph{\textbf{p}}\emph{\textbf{k}}_{\#E}\}$
is eligible for SACT sampling, where
$\varphi$ is positive definite and vanishing.} \label{vanishingqingkuang}

It follows from Lemma \ref{characerization} that
for a continuous positive definite  function $\varphi$, its Fourier transform $\widehat{\varphi}$ is necessarily nonvanishing,
namely, $\varphi(\textbf{0})\neq0$. But $\varphi$ itself  is not necessarily nonvanishing, namely, $\widehat{\varphi}(\textbf{0})\neq0$
does not necessarily hold. That is, there exist positive definite and vanishing functions.
We next provide  an example to explain this. It is the motivation for this subsection.
\subsubsection{A motivation example}
\begin{exam}\label{lizi}
Let \begin{align}\phi_{1}(x_{1})=B_{2}(x_{1}+1)=\left\{\begin{array}{cccccccccc}
x_{1}+1,&-1<x_{1}\leq0,\\
1-x_{1}, &0<x_{1}<1,\\
0, &|x_{1}|\geq1.
\end{array}\right.\end{align}
By \eqref{fuliyebiaodashi} we have $\widehat{\phi_{1}}(\xi_{1})=(\frac{\sin\xi_{1}/2}{\xi_{1}/2})^{2}.$
 Define $\varphi_{1}$ via $$\widehat{\varphi_{1}}(\xi_{1})=(\frac{e^{\texttt{i}\xi_{1}/2}-e^{-\texttt{i}\xi_{1}/2}}{2\texttt{i}})^{2}\widehat{\phi}_{1}(\xi_{1}/2)=
\sin^{2}(\xi_{1}/2)(\frac{\sin(\xi_{1}/4)}{\xi_{1}/4})^{2}\geq0.$$
Additionally, in the time-domain  $\varphi_{1}(x_{1})=-\frac{1}{2}\phi_{1}(2x_{1}+1)+\phi_{1}(2x_{1})-\frac{1}{2}\phi_{1}(2x_{1}-1).$
It is straightforward to check that $\varphi_{1}$ is continuous and   bounded, and $\varphi_{1}(0)=1$. By Lemma \ref{characerization}, $\varphi_{1}$
is positive definite. But it is clear that $\widehat{\varphi_{1}}(0)=0.$
Now through the tensor product we define
 \begin{align}\label{op456}\varphi(x_{1},x_{2})=\varphi_{1}(x_{1})\varphi_{2} (x_{2}).\end{align}
Using Lemma \ref{characerization} again, one can check that $\varphi$ is also positive definite. But $\widehat{\varphi}(\textbf{0})=0.$
That is, $\varphi$ is vanishing.
\end{exam}

As summarized  in Remark \ref{xiangliangdezuoyong}
the nonvanishing property is key in Theorem \ref{ddgo} for  providing  great flexibility for the choice
of direction vector $\emph{\textbf{p}}.$
On the other hand, Example \ref{lizi} confirms  the existence of positive definite but vanishing functions, and such functions do not reach the requirement of
Theorem \ref{ddgo}.  As such, for the vanishing case we need to address what direction vector $\emph{\textbf{p}}$ is eligible for
the SACT sampling.

\subsubsection{The SACT sampling result when $\varphi$ is positive definite and vanishing}
Now it is ready to establish the fifth  main result in the following Theorem
\ref{eligiblevectors}.
On the generator, the difference between the  Theorem
\ref{eligiblevectors}  and Theorem \ref{ddgo}
is that the generator $\varphi$ here  is vanishing here, namely, $\widehat{\varphi}(\textbf{0})=0$
while that in Theorem \ref{ddgo} is nonvanishing. The following definition will be  necessary for Theorem
\ref{eligiblevectors}.

\begin{defi}\label{910876}
Let $\varphi: \mathbb{R}^2\rightarrow \mathbb{C}$ be  positive definite and compactly supported. For
$\emph{\textbf{x}}_{0}\in \mathbb{R}^{2}$ such that $\widehat{\varphi}(\emph{\textbf{x}}_{0})>0$,   as in Definition \ref{definitin1234}, $\delta^{\widehat{\varphi}}_{\emph{\textbf{x}}_{0}, \max}\in (0, \infty]$ is supposed to be  the maximum value such that $\widehat{\varphi}(\emph{\textbf{x}})>0$ for any $\emph{\textbf{x}}\in \mathring{\mathcal{D}}(\emph{\textbf{x}}_0,
\delta^{\widehat{\varphi}}_{\emph{\textbf{x}}_{0}, \max})$.
Denote the nonzero set of $\widehat{\varphi}$ by $\mathcal{G}_{\widehat{\varphi}}$ such that $\widehat{\varphi}(\emph{\textbf{x}})>0$
for any $\emph{\textbf{x}}\in \mathcal{G}_{\widehat{\varphi}}$. As in Definition \ref{flagvector},  define
\begin{align}\label{DVV}\hbox{DV}_{\widehat{\varphi}}=\bigcup_{\emph{\textbf{x}}\in \mathcal{G}_{\widehat{\varphi}}}\hbox{dv}_{\mathring{\mathcal{D}}(\emph{\textbf{x}},
\delta^{\widehat{\varphi}}_{\emph{\textbf{x}}, \max})},\end{align}
where $\hbox{dv}_{\mathring{\mathcal{D}}(\emph{\textbf{x}},
\delta^{\widehat{\varphi}}_{\emph{\textbf{x}}, \max})}$ is defined via  Definition \ref{definitin1234}.
\end{defi}

\begin{theo}\label{eligiblevectors}
Suppose that $\varphi: \mathbb{R}^2\longrightarrow \mathbb{C}$ is   compactly supported, continuous,  positive   and vanishing  such that $\hbox{supp}(\varphi)\subseteq[N_{1}, M_{1}]\times
[N_{2}, M_{2}]$, its Sobolev smoothness  $\nu_{2}(\varphi)>1/2$ and $\widehat{\varphi}(\textbf{0})=0$.
Moreover,  $f\in V(\varphi, \mathbb{Z}^2)$ is an arbitrary target function
such that $\hbox{supp}(f)\subseteq[a_{1}, b_{1}]\times [a_{2}, b_{2}]$. As previously,
define  $ E=\{\emph{\textbf{k}}_{1}, \ldots, \emph{\textbf{k}}_{\#E}\}=\big\{\big[\lceil a_{1}-M_{1}\rceil, \lfloor b_{1}-N_{1}\rfloor\big]\times
\big[\lceil a_{2}-M_{2}\rceil, \lfloor b_{2}-N_{2}\rfloor\big]\big\}\cap \mathbb{Z}^{2},$
and \begin{align}E^{+}=\left\{
\begin{array}{cccc}
   \emptyset,&\#E=1,\\
  \{\emph{\textbf{x}}-\emph{\textbf{y}}:
\emph{\textbf{x}}\neq\emph{\textbf{y}}\in E\},&\#E>1.
\end{array}
\right.\end{align}
Then $f$ can be determined uniquely by its SA Radon (w.r.t $\emph{\textbf{p}}$) samples
at $\{\emph{\textbf{p}}\emph{\textbf{k}}_{1}, \ldots, $
$\emph{\textbf{p}}\emph{\textbf{k}}_{\#E}\}$,
where $\emph{\textbf{p}}$ is an arbitrary     direction vector from $\hbox{DV}_{\widehat{\varphi}}\setminus\hbox{dv}_{\mathcal{N}_{E^{+}}}$,
with $\hbox{dv}_{\mathcal{N}_{E^{+}}}$ defined in Definition \ref{crictial}.
\end{theo}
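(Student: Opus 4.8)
The plan is to deduce the statement from Theorem~\ref{HHKK} in exactly the way Theorem~\ref{ddgo} was deduced from it; the only genuinely new point is that the device ``$\widehat{\mathcal{R}_{\emph{\textbf{p}}}\varphi}(0)=\widehat{\varphi}(\textbf{0})>0$'' used in Theorem~\ref{ddgo} to guarantee $\widehat{\mathcal{R}_{\emph{\textbf{p}}}\varphi}\not\equiv0$ is unavailable for a vanishing $\varphi$, and replacing it is precisely the reason for restricting $\emph{\textbf{p}}$ to $\hbox{DV}_{\widehat{\varphi}}$.

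First I would verify that the admissible set $\hbox{DV}_{\widehat{\varphi}}\setminus\hbox{dv}_{\mathcal{N}_{E^{+}}}$ is nonempty. Since $\varphi$ is positive definite, continuous and (being compactly supported) in $L^{1}(\mathbb{R}^{2})$ and bounded, Lemma~\ref{characerization} gives $\widehat{\varphi}\geq0$; consequently the nonzero set $\mathcal{G}_{\widehat{\varphi}}=\{\emph{\textbf{x}}:\widehat{\varphi}(\emph{\textbf{x}})>0\}$ of Definition~\ref{910876} is literally the nonzero set of Definition~\ref{flagvector}, so $\hbox{DV}_{\widehat{\varphi}}$ denotes the same object in both places and Proposition~\ref{lebesguecedu}(1) yields $\mu\big(\arg_{\hbox{DV}_{\widehat{\varphi}}}\big)>0$. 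On the other hand, exactly as in the proof of Theorem~\ref{yibanshengcyuan}, $\#\hbox{dv}_{\mathcal{N}_{E^{+}}}<\infty$ (it is empty if $\#E=1$, and if $\#E>1$ then $\#E^{+}=\#E(\#E-1)<\infty$, so Proposition~\ref{guanyucedu}(2) applies). A subset of $[0,2\pi)$ of positive Lebesgue measure from which finitely many points are deleted is still of positive measure, hence nonempty, so a valid direction vector $\emph{\textbf{p}}$ exists.

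Next I would fix $\emph{\textbf{p}}=(\cos\theta,\sin\theta)\in\hbox{DV}_{\widehat{\varphi}}\setminus\hbox{dv}_{\mathcal{N}_{E^{+}}}$ and check the hypotheses of Theorem~\ref{HHKK}. Item~(iii): when $\#E>1$, writing $\emph{\textbf{p}}\emph{\textbf{k}}_{l}-\emph{\textbf{p}}\emph{\textbf{k}}_{n}=\emph{\textbf{p}}(\emph{\textbf{k}}_{l}-\emph{\textbf{k}}_{n})$ with $\emph{\textbf{k}}_{l}-\emph{\textbf{k}}_{n}\in E^{+}$ and invoking $\emph{\textbf{p}}\notin\hbox{dv}_{\mathcal{N}_{E^{+}}}$ forces $\emph{\textbf{p}}\emph{\textbf{k}}_{l}\neq\emph{\textbf{p}}\emph{\textbf{k}}_{n}$, just as in item~(1) of the proof of Theorem~\ref{ddgo}. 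The standing continuity requirement on $\mathcal{R}_{\emph{\textbf{p}}}\varphi$ in Theorem~\ref{HHKK} holds because $\nu_{2}(\varphi)>1/2$ and Proposition~\ref{lianxuxing}(2). By Lemma~\ref{pouy}, $\mathcal{R}_{\emph{\textbf{p}}}\varphi$ is compactly supported, so $\widehat{\mathcal{R}_{\emph{\textbf{p}}}\varphi}=\widehat{\varphi}(\emph{\textbf{p}}^{T}\cdot)\in C^{\infty}(\mathbb{R})$; since $\widehat{\varphi}\geq0$, this function is real and nonnegative, hence its imaginary part is identically $0$ and its real part is $\widehat{\mathcal{R}_{\emph{\textbf{p}}}\varphi}$ itself, which is $\geq0$ and so has unchanged sign. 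The one place where $\emph{\textbf{p}}\in\hbox{DV}_{\widehat{\varphi}}$ is used is ruling out $\widehat{\mathcal{R}_{\emph{\textbf{p}}}\varphi}\equiv0$: by Definitions~\ref{definitin1234} and~\ref{910876} there is $\emph{\textbf{x}}\in\mathcal{G}_{\widehat{\varphi}}$ with $\emph{\textbf{p}}\in\hbox{dv}_{\mathring{\mathcal{D}}(\emph{\textbf{x}},\delta^{\widehat{\varphi}}_{\emph{\textbf{x}},\max})}$, so there is $\gamma>0$ with $\gamma\emph{\textbf{p}}^{T}\in\mathring{\mathcal{D}}(\emph{\textbf{x}},\delta^{\widehat{\varphi}}_{\emph{\textbf{x}},\max})$ and therefore $\widehat{\mathcal{R}_{\emph{\textbf{p}}}\varphi}(\gamma)=\widehat{\varphi}(\gamma\emph{\textbf{p}}^{T})>0$ --- this is exactly Proposition~\ref{lebesguecedu}(2). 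Hence item~(i) of Theorem~\ref{HHKK} holds, and item~(iii) holds.

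Theorem~\ref{HHKK} then gives that the matrix $A_{\varphi,\emph{\textbf{p}},X_{\emph{\textbf{p}}}}$ with $X_{\emph{\textbf{p}}}=\{\emph{\textbf{p}}\emph{\textbf{k}}_{1},\ldots,\emph{\textbf{p}}\emph{\textbf{k}}_{\#E}\}$ is invertible, and Theorem~\ref{theorem123} then produces the unique reconstruction of $f$: the coefficient vector $(c_{\emph{\textbf{k}}_{1}},\ldots,c_{\emph{\textbf{k}}_{\#E}})^{T}$ is recovered from $\big(\mathcal{R}_{\emph{\textbf{p}}}f(\emph{\textbf{p}}\emph{\textbf{k}}_{1}),\ldots,\mathcal{R}_{\emph{\textbf{p}}}f(\emph{\textbf{p}}\emph{\textbf{k}}_{\#E})\big)^{T}$ by inverting $A_{\varphi,\emph{\textbf{p}},X_{\emph{\textbf{p}}}}$, and then $f=\sum_{l=1}^{\#E}c_{\emph{\textbf{k}}_{l}}\varphi(\cdot-\emph{\textbf{k}}_{l})$ is determined because the relevant shifts of $\varphi$ are linearly independent. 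I do not anticipate a real obstacle here: the whole argument is a transcription of the proof of Theorem~\ref{ddgo} with a single substitution, and the only step that needs attention is matching up the two occurrences of $\hbox{DV}_{\widehat{\varphi}}$ (Definitions~\ref{flagvector} and~\ref{910876}), so that Proposition~\ref{lebesguecedu} may legitimately be quoted.
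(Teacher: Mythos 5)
Your proposal is correct and follows essentially the same route as the paper's own proof: reduce to Theorem \ref{HHKK} exactly as in Theorem \ref{ddgo}, with the restriction $\emph{\textbf{p}}\in \hbox{DV}_{\widehat{\varphi}}$ (via Proposition \ref{lebesguecedu}) replacing the nonvanishing trick to secure $\widehat{\mathcal{R}_{\emph{\textbf{p}}}\varphi}\not\equiv0$, and nonemptiness of the admissible direction set obtained from positive angular measure minus a finite exceptional set. Your explicit invocation of Lemma \ref{characerization} for $\widehat{\varphi}\geq0$ and your check that the two definitions of $\hbox{DV}_{\widehat{\varphi}}$ agree are minor points of extra care, not a different argument.
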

\begin{proof}
It has been proved in the proof of Theorem \ref{yibanshengcyuanYY} that $\hbox{DV}_{\widehat{\varphi}}\setminus\hbox{dv}_{\mathcal{N}_{E^{+}}}$
is not empty.
Since the only difference between the generator $\varphi$ here  and that in Theorem \ref{ddgo} is
the vanishing property
$\widehat{\varphi}(\textbf{0})=0,$ we simplify the proof and focus on  something related to
the  difference. Firstly, since  $\nu_{2}(\varphi)>1/2$ then  it follows from
 Proposition \ref{lianxuxing} (2) that $\mathcal{R}_{\emph{\textbf{p}}}\varphi$ is continuous for any $\emph{\textbf{p}}$.
Secondly, for the case that $\#E>1$ as in the proof of Theorem \ref{ddgo} one can check that for
any direction vector $\emph{\textbf{p}}\in \{(\cos\theta, \sin\theta): \theta\in [0, 2\pi)\}\setminus\hbox{dv}_{\mathcal{N}_{E^{+}}}$,
we have $\emph{\textbf{p}}\emph{\textbf{k}}_{l}\neq
\emph{\textbf{p}}\emph{\textbf{k}}_{n}$ for any $l\neq n\in \{1, \ldots, \#E\}$.
Then item (iii) of Theorem \ref{HHKK} holds.
 Now we focus on the proof that  $\widehat{\mathcal{R}_{\emph{\textbf{p}}}\varphi}\geq0$ and
 $\widehat{\mathcal{R}_{\emph{\textbf{p}}}\varphi}\not\equiv0$ for any
 $\emph{\textbf{p}}\in \hbox{DV}_{\widehat{\varphi}}.$ By $\widehat{\varphi}\geq0$ we have $\widehat{\mathcal{R}_{\emph{\textbf{p}}}\varphi}(\xi)=
 \widehat{\varphi}(\emph{\textbf{p}}^{T}\xi)\geq0$ for any
 $\emph{\textbf{p}}\in \hbox{DV}_{\widehat{\varphi}}.$ Since $\emph{\textbf{p}}\in \hbox{DV}_{\widehat{\varphi}},$
  it follows from Proposition  \ref{lebesguecedu} (2) that
$\mathcal{R}_{\emph{\textbf{p}}}\varphi\not\equiv0.$ Then item (i)
of Theorem \ref{HHKK} holds.
Now by Theorem \ref{HHKK}, the proof is completed.
\end{proof}

\end{document}